\numberwithin{equation}{section}
\renewcommand{\emph}[1]{\textsf{\textit{#1}}}
\newenvironment{myenumerate}{%
\renewcommand{\theenumi}{(\roman{enumi})}%
\renewcommand{\labelenumi}{\theenumi}%
\begin{list}{\labelenumi}
	{%
	\setlength{\itemsep}{0.4em}%
	\setlength{\topsep}{0.5em}%
	\setlength\leftmargin{2.45em}%
	\setlength\labelwidth{2.05em}%
	\setlength{\labelsep}{0.4em}%
	\usecounter{enumi}%
	}%
	}%
{\end{list}
}
\renewenvironment{enumerate}{
\begin{myenumerate}}%
{\end{myenumerate}}
\renewcommand\section{\@startsection{section}{1}%
	\z@{1.5\linespacing\@plus.5\linespacing}{1\linespacing}%
	{\large\normalfont\scshape\centering}}
\newcommand{\lb}{\left(}
\newcommand{\rb}{\right)}
\newcommand{\be}{\begin{equation}}
\newcommand{\ee}{\end{equation}}
\newcommand{\EE}{\ensuremath{\mathbb{E}}}
\newcommand{\PP}{\ensuremath{\mathbb{P}}}
\newcommand{\C}{\ensuremath{\mathbb{C}}}
\newcommand{\la}{\lambda}
\newcommand{\ga}{\gamma}
\newcommand{\ta}{\theta}
\renewcommand{\rho}{\varrho}
\renewcommand{\leq}{\leqslant}
\renewcommand{\geq}{\geqslant}
\newcommand{\Gammainv}{\mathrm{Gamma}^{-1}}
\newcommand{\pd}{\mathcal{P}_d}
\DeclareMathOperator{\Wis}{Wishart}
\newcommand{\Wisv}{\Wis^{-1}}
\DeclarePairedDelimiter\abs{\lvert}{\rvert} 
\DeclareMathOperator{\tr}{tr}
\renewcommand \d {\mathsf{d}}
\renewcommand{\a}{\alpha}
\renewcommand{\b} {\beta}
\renewcommand{\k} {\kappa}
\newcommand{\bl} {\boldsymbol\lambda}
\newcommand{\bw} {\boldsymbol w}
\newcommand{\id}{\mathrm{id}}
\newcommand{\RR} {\mathbb{R}}
\newcommand{\ZZ}{\mathbb{Z}}
\newcommand{\strip}{\mathbb{S}_N}
\newcommand{\gp} {\mathcal{GP}}
\newcommand{\hp}{\mathcal{P}}
\newcommand{\NN}{\mathbb{N}}
\newcommand{\bB} {\mathbf{B}}
\newcommand{\bta} {\mathbf{\Theta}}
\newcommand{\bR} {\mathbf{R}}
\newcommand{\bS} {\mathbf{S}}
\newcommand{\p}{\mathbf{p}}
\newcommand{\q}{\mathbf{q}}
\renewcommand{\P}{\mathsf{P}}
\renewcommand{\t} {\theta}
\DeclareMathOperator{\wt}{wt}
\newcommand{\GL}{\mathrm{GL}}
\newcommand{\ep} {\varepsilon}
\newcommand{\U} {\mathcal{U}}
\newcommand{ \rmU }{\mathrm{U}}
\newcommand{ \hq}{\mathcal{Q}}
\newcommand{\gq} {\mathcal{GQ}}
\newcommand{\bP}{\mathbf{P}}
\newcommand{\ba} {\boldsymbol\alpha}
\newcommand{\bb} {\boldsymbol\beta}
\newcommand{\bg} {\boldsymbol\gamma}
\newcommand{\arcleft}{\raisebox{-17pt}{\begin{tikzpicture}
        \draw[thick] (0,0.5) arc (90:270:0.5);
        \node[right] at (-.15,0.65) {\small $x$};
        \node[right] at (-.15,-0.65) {\small $y$};
        \node[left] at (-0.5,0) {\textcolor{red}{\small  $u$}};
\end{tikzpicture}}}
\newcommand{\arcright}{\raisebox{-17pt}{\begin{tikzpicture}
        \draw[thick] (0,-0.5) arc (-90:90:0.5);
        \node[left] at (.15,0.65) {\small $x$};
        \node[left] at (.15,-0.65) {\small $y$};
        \node[right] at (0.5,0) {\textcolor{red}{\small  $v$}};
\end{tikzpicture}}}
\newcommand{\bulkright}{\raisebox{-15pt}{\begin{tikzpicture}
        \draw[thick] (0,0)--(0.6,0.6);
        \node[below left] at (.15,0.05) {\small $y$};
        \node[above right] at (0.45,0.55) {\small $x$};
        \node[above] at (0.3,0.3) {\textcolor{red}{\small $\ga$}};
\end{tikzpicture}}}
\newcommand{\bulkleft}{\raisebox{-15pt}{\begin{tikzpicture}
        \draw[thick] (0.6,0)--(0,0.6);
        \node[below right] at (0.45,.05) {\small $y$};
        \node[above left] at (0.15,0.55) {\small $x$};
        \node[above] at (0.3,0.3) {\textcolor{red}{\small  $\ga$}};
\end{tikzpicture}}}
\newcommand{\bulkrightdotted}{\raisebox{-15pt}{\begin{tikzpicture}
        \draw[dashed] (0,0)--(0.6,0.6);
        \node[below left] at (.15,.05) {\small $y$};
        \node[above right] at (0.4,0.50) {\small $x$};
\end{tikzpicture}}}
\newcommand{\bulkleftdotted}{\raisebox{-15pt}{\begin{tikzpicture}
        \draw[dashed] (0.6,0)--(0,0.6);
        \node[below right] at (0.45,0.05) {\small $y$};
        \node[above left] at (0.2,0.5) {\small $x$};
\end{tikzpicture}}}
\newcommand{ \Ubw}{\U^{\begin{tikzpicture}[scale=0.2] 
\draw[dotted] (0,0) -- (1,0)--(1,1)--(0,1)--(0,0); 		
\draw[thick] (0,1) -- (0,0) -- (1,0); 
\end{tikzpicture}}}
\newcommand{\Ulw}{\U^{\begin{tikzpicture}[scale=0.2]
		\draw[dotted] (0,0) -- (0,1)--(-1,0)--(0,0);
            \draw[thick] (0,0) -- (-1,0);
		\end{tikzpicture}}}
\newcommand{\Urw}{\U^{\begin{tikzpicture}[scale=0.2]
		\draw[dotted] (0,0) -- (1,0)--(0,-1)--(0,0);
            \draw[ thick] (0,0) -- (0,-1);
		\end{tikzpicture}}}
\newcommand{\rmUbw}{\mathrm{U}^{
\begin{tikzpicture}[scale=0.2] 
\draw[dotted] (0,0) -- (1,0)--(1,1)--(0,1)--(0,0); 		
\draw[thick] (0,1) -- (0,0) -- (1,0); 
\end{tikzpicture}}}
\newcommand{\rmUlw}{\mathrm{U}^{
\begin{tikzpicture}[scale=0.2]
		\draw[dotted] (0,0) -- (0,1)--(-1,0)--(0,0);
            \draw[thick] (0,0) -- (-1,0);
		\end{tikzpicture}}}
\newcommand{\rmUrw}{\mathrm{U}^{
\begin{tikzpicture}[scale=0.2]
		\draw[dotted] (0,0) -- (1,0)--(0,-1)--(0,0);
            \draw[ thick] (0,0) -- (0,-1);
		\end{tikzpicture}}}
\newtheorem{theorem}{Theorem}[section]
\newtheorem{conjecture}[theorem]{Conjecture}
\newtheorem{lemma}[theorem]{Lemma}
\newtheorem{proposition}[theorem]{Proposition}
\newtheorem{corollary}[theorem]{Corollary}
\theoremstyle{definition}
\newtheorem{remark}[theorem]{Remark}
\theoremstyle{definition}
\newtheorem{example}[theorem]{Example}
\theoremstyle{definition}
\newtheorem{definition}[theorem]{Definition}
\theoremstyle{definition}
\title[Stationary inverse-Wishart polymers]{Stationary inverse-Wishart polymers}
\author[G.~Barraquand]{Guillaume Barraquand}
\address{G.~Barraquand, Laboratoire de Physique de l'Ecole Normale Supérieure, Ecole Normale Supérieure, PSL University, CNRS, Sorbonne Université, Université Paris-Cité, 24 rue Lhomond, 75005 PARIS}
\email{guillaume.barraquand@math.cnrs.fr} 
\author[Z.~Ouyang]{Zikun Ouyang}
\address{Z.~Ouyang, CEREMADE, CNRS, Université Paris-Dauphine, Université PSL, 75016 PARIS, FRANCE and Laboratoire de Physique de l'Ecole Normale Supérieure, Ecole Normale Supérieure, PSL University, CNRS, Sorbonne Université, Université Paris-Cité, 24 rue Lhomond, 75005 PARIS}
\email{ouyang@ceremade.dauphine.fr}
\begin{document}

		\begin{abstract} 
A solvable model of directed polymer with matrix-valued disorder is introduced in \cite{AristaBisiOConnell}. The disorder is made of $d\times d$ inverse-Wishart random matrices, so that the model nicely generalizes the well-studied log-gamma polymer, recovered when $d=1$. Much of the features of the log-gamma polymer seem to have analogues for higher $d$, although the integrability needs to be better understood. In this paper, we introduce stationary inverse-Wishart polymer models on a quadrant or a strip of $\mathbb Z^2$. In each setting, we identify stationary measures, characterized explicitly in terms of random walks with inverse-Wishart increments in special cases, or more complicated two-layer Gibbs measures for generic choices of boundary parameters. We also make conjectures about asymptotics of the free energy, and explain important differences between matrix-valued polymer models and 
their  scalar counterpart, due to non-commutativity. 
			\end{abstract}
            

	\maketitle
	\setcounter{tocdepth}{1}
	\tableofcontents

\section{Introduction and main results}
\subsection{Preface}\label{subsec:Preface}
Directed polymers are an important family of models in the Kardar-Parisi-Zhang (KPZ) universality class. In $1+1$ dimensions, the first exactly solvable example of directed polymer was introduced  in \cite{Seppalainen}, identifying a stationary structure and proving an explicit law of large numbers for the free energy. The integrability of the model was further studied  in \cite{CorwinO’ConnellSeppalainenZygouras}. The probability distribution of the partition function of the log-gamma polymer is a marginal of the Whittaker measure which may be seen as a positive temperature counterpart to the celebrated Schur measure on random partitions  \cite{Okounkov,OConnell2,BorodinCorwin}. This eventually led to   Tracy-Widom fluctuations for the free energy, confirming that the model lies in the KPZ universality class \cite{BorodinCorwinRemenik}.

\bigskip
From a physical perspective, it is natural to consider stochastic growth in contact with boundary reservoirs. Well-studied probabilistic models include the open asymmetric simple exclusion process (ASEP) \cite{MacDonaldGibbsPipkin,Liggett} and the KPZ equation on an interval \cite{CorwinShen}. For polymers, the appropriate analogue is when path are confined to  a strip \cite{KrugTang} -- see Figure \ref{fig:model on a strip}. 

\bigskip 

As is common in the study of Markov processes, a fundamental question is the characterization of stationary measures.
For the open ASEP, the stationary measure was characterized using a powerful algebraic method known as the Matrix Product Ansatz \cite{DerridaEvansHakimPasquier}. The stationary measure of the open KPZ equation was first studied in \cite{CorwinKnizel}, building on earlier works on the stationary measures of open ASEP \cite{UchiyamaSasamotoWadati,BrycWesołowski}. The result of \cite{CorwinKnizel} was later reformulated in terms of exponential functionals of two Brownian motions \cite{BrycKuznetsovWangWesołowski, BarraquandLeDoussal}. Such reweighted Brownian motions are very reminiscent of Gibbsian line ensembles arising in the KPZ class  \cite{CorwinHammond1,CorwinHammond2}. This connection motivated  \cite{BarraquandCorwinYang} to study the general two-layer Gibbsian line ensembles, so as to determine the stationary measure of solvable polymer models on a strip, and to recover predictions from \cite{BarraquandLeDoussal}.  

\bigskip
For the log-gamma polymer on the quadrant $\mathbb Z_{\geq 0}^2$, one uncovers a stationary structure by assigning  independent inverse-gamma random variables to the increments of the partition function along the first row and first column of the quadrant \cite{Seppalainen}. By employing a property of the gamma distribution known as  Lukacs' Theorem, it was shown that the stationary measure of the increment process for the partition functions along any downright path  is given by products of independent inverse-gamma variables. For the model on a strip, however, the stationary measure depends non-trivially on boundary parameters and is not, in general,   a product measure. However, it arises as a marginal of a Gibbs measure on an larger state space \cite{BarraquandCorwinYang}.  

\bigskip
From a mathematical perspective, it is natural to consider non-commutative analogues of these models. The paper \cite{AristaBisiOConnell} introduced a matrix-valued directed polymer partition function on the quadrant with inverse-Wishart disorder, generalizing the log-gamma polymer. Authors showed that the probability distribution of the partition function is related to a  matrix generalization of the  Whittaker measure. It is therefore appealing, though challenging in practice, to study this measure to understand the  asymptotics of the polymer model. Other works related to matrix-valued partition functions or exponential functionals of random walks on matrices include \cite{RiderValko,OConnell,AristaBisiO’ConnellMatsumoto–YorandDufresne} in the mathematics literature, and \cite{GrabschTexier,GautiBouchaudLeDoussal,KrajenbrinkLeDoussal} in the physics literature. In particular, \cite{KrajenbrinkLeDoussal} studied the weak noise limit of the  inverse-Wishart polymer from \cite{AristaBisiOConnell} and derived a stationary measure (which agrees with our Theorem \ref{thm:quadrant stationary measure}) using methods from physics. 
A matrix version of the stochastic heat equation, i.e. the exponential of the KPZ equation, is also introduced in \cite{KrajenbrinkLeDoussal}.   

\bigskip 
Inspired by the approach of the log-gamma polymer taken in \cite{Seppalainen}, we introduce in this paper a stationary variant of the inverse-Wishart polymer model defined in \cite{AristaBisiOConnell}. The model is stationary in the sense of Theorem \ref{thm:quadrant stationary measure} below. 
We also consider the inverse-Wishart polymer model on a strip and exhibit a stationary measure in this case as well. 
Remarkably, the method introduced in \cite{BarraquandCorwinYang}, based on so-called two-layer Gibbs measures,  works as well in the matrix setting. The model on a strip is useful in two ways: First, we derive our results for the model on the quadrant by comparison with the model on a large strip in Section \ref{subsec:Application to inhomogeneous quadrant model}. Here, our method differs from \cite{Seppalainen} as it does not seem that one can rely on matrix variants of Lukacs' theorem (see Remark \ref{rmk:matrix Lukacs theorem} for details). Second, we believe that the classification of stationary measures and the proof of  laws of large numbers, which seems to be currently out of reach, will be more tractable for the model on a strip, at least for suitably chosen boundary parameters (which we refer to as the equilibrium regime below). See Conjectures \ref{conj: LLN quadrant} and   \ref{conj:LLN strip}. 
 
\bigskip 
As the KPZ universality class is modeling out of equilibrium phenomena, stationary measures of the height function or the free energy are typically defined modulo the value at certain fixed reference point. In other terms, one defines stationary measures for the spatial height increments. We emphasize that, in contrast to the scalar case, here we study the stationary measure for the process of partition functions themselves -- not ratios of partition functions. The distinction is important  because, in the scalar case, the recurrence of the partition functions induces an autonomous Markovian dynamics for the increment process. This is no longer true for the model with matrix-valued disorder due to the non-commutativity of matrix multiplication. We will explain this issue in more detail throughout the paper (see in particular Remark \ref{rmk:markovian issue quadrant} and Remark \ref{rmk:markovian issue strip}).
Due to the out-of-equilibrium nature of the system, a true stationary probability measure 
is not expected.  
Nevertheless, we will show that there exist $\sigma$-finite measures that are stationary. 
Since the Markov processes considered are transient, we do not investigate the uniqueness of the stationary measures here. 

\bigskip 
The rest of the introduction is structured as follows. In Section \ref{subsec:Inverse-Wishart polymer with boundary conditions}, we introduce a variant of \cite{AristaBisiOConnell}'s model with boundary conditions,  and we  state  stationarity results. In particular, we describe a stationary measure as  multiplicative random walk on the positive definite matrices, which confirms predictions from \cite{KrajenbrinkLeDoussal}. In Section \ref{subsec:Back to model with delta boundary condition}, we propose a conjecture related to the law of large numbers for the model without boundary, and give an upper bound for the conjectured limiting value, based on a martingale argument. 
In Section \ref{subsec:Model on a strip}, we introduce the model on a strip. We focus on  two distinct parameter regimes and study the corresponding invariant measures  in each case.
 
\subsection{Inverse-Wishart polymer on $\mathbb Z_{\geq 0}^2$}\label{subsec:Inverse-Wishart polymer with boundary conditions}
\subsubsection{Notations} 
We fix some preliminary notations in this section and we refer to Section \ref{subsec: preliminary} for a more detailed introduction on the inverse-Wishart distribution and related special functions.
Let $\pd$ denote the set of all $d\times d$ symmetric, strictly positive definite matrices. For $x\in\pd$, we denote its determinant and trace by  $\abs{x}$ and $\tr[x]$ respectively. Together with Borel sets $\mathscr{B}(\pd)$ and the Radon measure defined by 
\be\label{eq:d mu}\d\mu (x)=\abs{x}^{-\frac{d+1}{2}}\prod_{1\leq i\leq j\leq d}dx_{i,j} ,\ee
$\lb\pd, \mathscr{B}(\pd), \d\mu\rb$
is a $\sigma$-finite measure space. Let us define the matrix product $\star$ on $\pd$ by 
$$x\star y:= y^{1/2} x y^{1/2} \qquad \text{ for all } x,y\in \pd,$$ 
where $ y^{1/2}$ refers to the unique solution $z\in\pd$ of the equation $z^2=y\in\pd$. 

A random matrix $X\sim\Wisv(\theta)$ has the (d-variate) inverse-Wishart distribution with parameter $\theta>\frac{d-1}{2}$ if it has density
    \be\label{eq: IW density}\P^{-}_{\ta}(x)=\frac{1}{\Gamma_d(\theta)}\abs{x^{-1}}^\theta e^{-\tr[x^{-1}]}\ee
    with respect to the reference measure $\d\mu$. The normalization constant  $\Gamma_d(\theta)$ is the d-variate gamma function, well defined for $\theta>\frac{d-1}{2}$: 
    \be\label{eq:d-variate gamma function}\Gamma_d(\theta)=\int_{\pd}\abs{x}^\theta e^{-\tr[x]}\d\mu(  x)=\pi^{\frac{d(d-1)}{4}}\prod_{k=1}^d \Gamma\lb \ta-\frac{k-1}{2}\rb.\ee
$X$ is said to have Wishart distribution with parameter $\ta>\frac{d-1}{2}$ if $X^{-1}\sim\Wisv(\theta)$. Its density with respect to $\d\mu$ is given by 
$\P^{+}_{\ta}(x):=\P^{-}_{\ta}(x^{-1})$.

The lack of Markovianity mentioned in  Section \ref{subsec:Preface} is related to  the following observation. Let $x,y\in\pd$,  without further assumptions, the implication 
$$x\star y^{-1}\sim\Wisv(\ta) \implies y\star x^{-1}\sim\Wis(\ta)$$
is true \textbf{if and only if} $d=1$, in which case the inverse-Wishart distribution reduces to the inverse-gamma distribution. We will explain how this is related to the  Markovianity of increments  through more concrete examples in Remark \ref{rmk:markovian issue quadrant} and Remark \ref{rmk:markovian issue strip}.

\subsubsection{The model with boundary conditions}
Let $\ZZ^2_{\geq0}=\{(n,m)\}_{n,m\geq0}$ denote the first quadrant of $\ZZ^2$. Let $\tau_l:\ZZ^2\rightarrow\ZZ^2, x\mapsto x+(l,l), l\geq 0$ be the diagonal translations on $\ZZ^2$, clearly $\ZZ^2_{\geq0}$ is preserved under such translations. We call the subset consisting of points with either  $n=0$ or $m=0$ as the left boundary or bottom boundary of the quadrant, respectively.  We denote the combined set of boundary points by the sequence 
$\hp_{b}=(\mathbf{b}_k)_{ k \in\ZZ}\in\lb\ZZ^2_{\geq0}\rb^{\ZZ}$, 
where  $\mathbf{b}_{-k}=(0,k)$ and $\mathbf{b}_k=(k,0)$ for $k\geq 0$.

Throughout the paper, we write $X \sim \mu$ to indicate that the law of $X$ is $\mu$ even when $\mu$ is a $\sigma$-finite measure, and we continue to refer to $X$ as a “random variable” or “stochastic process” when there is no ambiguity.   

\begin{definition}[Model with boundary conditions]\label{def:quadrant model with bc}Let $d\geq1$. Fix a parameter $\theta \in\RR$ with $2\ta>\frac{d-1}{2}$. Let $\lb W(n,m)\rb_{n,m\geq 1}$ be a family of independent inverse-Wishart random variables with parameter $2\theta$. Let $\bB=\lb B_k\rb_{k\in\ZZ}$ be an independent stochastic process on $\pd^{\ZZ}$. The partition functions of inverse-Wishart polymer $\lb Z^{\ta} (n,m)\rb_{n,m\geq 0}$ is given by the boundary conditions 
$$Z ^{\ta}(\mathbf{b}_k)=B_k, \quad k\in\ZZ, $$ 
and the recurrence: 
$$Z^{\ta} (n,m) =W(n,m)\star \lb Z^{\ta} (n-1,m)+Z^{\ta} (n,m-1)\rb,\quad n,m\geq 1.$$
We say that a $\sigma$-finite measure $\nu$ on $\pd^{\ZZ}$ is a stationary measure for the model if $\bB\sim\nu$ implies $\lb Z ^{\ta}(\tau_l\mathbf{b}_k)\rb_{k\in\ZZ}\sim \nu$ for all $l\geq 0$.
\end{definition}

\begin{remark}
  The partition functions $Z^{\ta} (n,m)$ depend on the boundary condition data $\bB$. Throughout the paper, we will specify explicitly the boundary conditions for different models each time to avoid ambiguity.   The model also depend on the dimension $d\geq 1$, we will often work with the model with some fixed dimension $d$ and omit it from the notation when there is no ambiguity.
 \end{remark}
 \begin{remark}
     Here we define the homogeneous model for simplicity, in the sense that the parameters for the random variables $\lb W(n,m)\rb_{n,m\geq 1}$ are the same.  We will study an inhomogeneous generalization in Section \ref{subsec:Application to inhomogeneous quadrant model} where the disorder depends on different parameters.
 \end{remark}
\subsubsection{The model of \cite{AristaBisiOConnell}}
The inverse-Wishart polymer introduced in \cite{AristaBisiOConnell} can be regarded as the model with the special boundary condition
\be\label{eq:step boundary condition}
B_{1}=\id_d, \qquad B_{k}=0_d \quad \text{for } k\neq 1.
\ee
We refer to this choice as the delta boundary condition, and we denote by $Z_d^{\ta}$ the partition function of the model with such boundary condition.  

It is of great interest to understand the asymptotics of the partition functions. For $d\geq 1$, let us define the normalized free energy
\begin{equation}
	f_d^{\ta}(n):=\frac{\log\abs{Z_{d}^{\ta}(n,n)}}{n}, \qquad n\geq 1.
	\label{eq:deffdn}
\end{equation}
As remarked in \cite{AristaBisiOConnell}, the case $d=1$ reduces to the log-gamma polymer with delta boundary condition, an integrable model which has been extensively studied. In this case, the partition function admits a compact form as a sum over paths:
\be\label{eq:sum over path}
Z_1^{\ta}(n,m)=\sum_{\pi:(1,1)\rightarrow (n,m)} \prod_{(i,j)\in \pi} W(i,j),
\ee
where the summation is over all up-right paths from $(1,1)$ to $(n,m)$. As a consequence, the sequence $\lb \log Z_1^{\ta}(n,n)\rb_{n\geq 1}$ is subadditive and hence the convergence of normalized free energy 
\be\label{eq:cv by free energy d=1}f_1^{\ta}(n)\xrightarrow{n\rightarrow\infty}
f_1^{\ta}  \qquad \text{a.s.}\ee
is assured by Kingman's subadditivity theorem.
Seppäläinen  \cite{Seppalainen} further identified the limit as
\be\label{eq:identify the limit d=1}f_1^{\ta} =-2\sup_{-\ta<u<\ta}\Big(\psi(\ta-u)+\psi(\ta+u) \Big)=-2\psi(\ta)\ee
where $\psi(\theta)$ is the digamma function. This identification was obtained by considering the model with variant boundary conditions, for which we will introduce an analogue for general $d \geq 1$. 

\subsubsection{Stationarity results for model with general reference matrix}
In the rest of the paper (except for the maximal current regime of the model on a strip that we define in \eqref{eq:parameters homogeneous general regime} and \eqref{eq:parameters inhomogeneous general regime}), we mainly consider the model with boundary data $\mathbf{B}$ given by a (possibly random) matrix at certain fixed reference point, called the reference matrix, followed by an independent forward or backward multiplicative random walk with Wishart or inverse-Wishart distribution. Precisely,

\begin{definition}[Stationary boundary conditions] 
\label{def:Stationary boundary conditions}
Let $M \geq 0$, and let $\theta, u \in \mathbb{R}$ be parameters such that $\theta + u, \theta - u > \frac{d-1}{2}$.  The stationary boundary conditions with reference point $-M$ and reference matrix $B_{-M}$ refer to the stochastic process $\mathbf{B} = (B_k)_{k \in \mathbb{Z}}$ on $ \pd^{\ZZ}$ with independent increments distributed as follows
\begin{align*}
B_{k-1}\star B_{k}^{-1} &\sim \Wisv(\t+u), \quad  k<-M;\\
    B_{k}\star B_{k-1}^{-1}&\sim \Wis(\t+u), \quad  0\geq k>-M;\\
    B_{k}\star B_{k-1}^{-1}&\sim \Wisv(\t-u), \quad k> 0,
\end{align*}
where $B_{-M}$ is a (possibly random) element of $\pd$  which is independent of the increments. We emphasize that our definition  requires $\theta>\frac{d-1}{2}$, while earlier, we had only assumed that $2\theta>\frac{d-1}{2}$.
 
Let $\nu$ be the distribution of the reference matrix. We denote the resulting stationary boundary conditions by  $\bB^{\ta,u}_{-M,\nu}$. With a slight abuse of notation, we also use $\bB^{\ta,u}_{-M,\nu}$ to denote the joint distribution of the process. We denote by $Z^{\ta,u}$ the partition function for the model with stationary boundary conditions. See Figure \ref{fig:how to sample boundary conditions} for illustration.
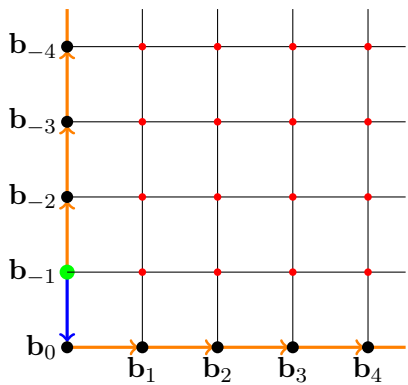
\begin{figure}
\centering
\begin{tikzpicture}[scale=1] 
            \draw[-, very thick, orange] (4,0)-- (4.5,0);
            \draw[-, very thick, orange] (0,4)-- (0,4.5);
            \foreach \x in {1, 2, 3, 4}
                \draw[->, very thick, orange] (\x-0.93,0) -- (\x-0.05,0) ;
            \foreach \y in { 2,3, 4}
                \draw[->,very thick, orange] (0,\y-0.93) -- (0,\y-0.05) ;
            \foreach \y in {1 }
                \draw[->,very thick, blue] (0,\y-0.05) -- (0,\y-0.93) ;
            \fill[green] (0,1) circle(0.1);
            \foreach \y in {0,2, 3, 4}
                \fill[black] (0,\y) circle(0.08);
            \foreach \x in {2, 1, 3, 4}
                \fill[black] (\x,0) circle(0.08);
            \node[left] at (0,0) {$ \mathbf{b}_{0}$};
            \node[left] at (0,1) {$ \mathbf{b}_{-1}$};
            \node[left] at (0,2) {$ \mathbf{b}_{-2}$};
            \node[left] at (0,3) {$ \mathbf{b}_{-3}$};
            \node[left] at (0,4) {$ \mathbf{b}_{-4}$};
\node[below] at (1,0) {$\mathbf{b}_{1}$};
\node[below] at (2,0) {$\mathbf{b}_{2}$};
\node[below] at (3,0) {$\mathbf{b}_{3}$};
\node[below] at (4,0) {$\mathbf{b}_{4}$};
            \foreach \x in {1, 2, 3, 4}
                \draw (\x, 0) -- (\x,4.5) ;
            \foreach \y in {1, 2, 3, 4}
                \draw (0, \y) -- (4.5, \y);
            \foreach \x in {1, 2, 3, 4} 
                \foreach \y in {1, 2, 3, 4}
                \fill[red] (\x,\y) circle(0.05);        
\end{tikzpicture}
\caption{Model with stationary boundary conditions. We sample the partition functions along the boundary recursively. First sample the partition function at the reference point
$(0,M)$  (in green) according to the law of the reference matrix $B_{-M}$ (in our example, $M=1$). An arrow $\p\rightarrow \q$ indicates that, $Z^{\ta,u}(\q)$ is sampled by left-$\star$ multiplying $Z^{\ta,u}(\p)$ with an independent inverse-Wishart (in orange) or Wishart (in blue) random variable. } 
    \label{fig:how to sample boundary conditions}
\end{figure}
\end{definition}

\bigskip 
In order to describe our first stationarity result, we need some further definitions.
A down-right path of length $N$ in the quadrant refers to a sequence $\hp=(\p_k)_{0\leq k \leq N}\in\lb\ZZ^2_{\geq0}\rb^{N+1}$ joining a vertex $\p_0$ on the left boundary   with a vertex $\p_N$ on the bottom boundary, such that the increments $\p_{k}-\p_{k-1}$ are either $(1,0)=``\rightarrow"$ or $(0,-1)=``\downarrow"$. The shape of $\hp$ can thus be recorded as a word 
$\bw(\hp)\in\left\{\rightarrow,\downarrow\right\}^{N}$.  

For a stochastic process $\bS=(S_t)_{t\in T}$ indexed by a countable index set $T$ and some $k\in T$, we denote by $\hat{\bS}^{(k)}=(S_t)_{t\in T, t\neq k}$ the marginal process omitting the coordinate indexed by $k$, and we always identify $\bS$ with the pair $(S_k, \hat{\bS}^{(k)})$.

\begin{definition}[$\Wis^{\pm}$ random walk]\label{def:IW random walk}
Let $N\in \mathbb{N}$ and 
$\bw=(w_1,\dots,w_N)\in\{ \rightarrow, \downarrow\}^{N}$ be a word of length $N$. Let $\bg=(\gamma_1,\dots,\gamma_N)$ be  parameters with $\gamma_k>\frac{d-1}{2}$ for all $ 1\leq k \leq N$.  
 The $\Wis^{\pm}$ random walk starting from $S_0$
refers to the stochastic process 
$\bS=\lb S_k\rb_{0\leq k\leq N} \in\pd^{N+1}$ 
with independent Wishart or inverse-Wishart increments:
$$
S_k\star S_{k-1}^{-1}\sim \left\{  \begin{aligned}
&\Wisv(\gamma_k) \quad & \text{if } w_k=\rightarrow \\
&\Wis(\gamma_k) \quad  &  \text{if } w_k=\ \downarrow 
\end{aligned}
\right., \quad 1\leq k\leq N.$$
where $S_0\in\pd$ is a (possibly random) matrix, called the starting matrix, which is independent of the increments.
Identify  $\bS=(S_0, \hat{\bS}^{(0)})$ where  $\hat{\bS}^{(0)}=\lb S_k\rb_{1\leq k\leq N}$. For every realization of $S_0$, we denote the marginal distribution of $\hat{\bS}^{(0)}$  by 
$ \d\bR^{\bg,\bw}_{S_0} $, that is, the probability measure on $\pd^N$ with density 
$$\bR^{\bg,\bw}_{S_0} (\hat{\bS}^{(0)})= \prod\limits_{\substack{1\leq k\leq N\\ w_k=\rightarrow}}  \Big(\P^{-}_{\ga_k}(S_k\star S_{k-1}^{-1})\Big) 
\prod\limits_{\substack{1\leq k\leq N\\ w_k=\downarrow}}
\Big(\P^{+}_{\ga_k}(S_k\star S_{k-1}^{-1})\Big) $$
with respect to the reference measure $\d\mu(\hat{\bS}^{(0)})=\prod_{1\leq k\leq N} \d\mu(S_k)$.

In particular, when $\bw = (\downarrow)^N$ (resp.\ $\bw = (\rightarrow)^N$), we refer to the process as the ($N$-steps) Wishart (resp.\ inverse-Wishart) random walk starting from $S_0$, and denote the marginal law of the last $N$ coordinates by $\d\bR^{\bg,\downarrow}_{S_0}(\hat{\bS})$ (resp.\ $\d\bR^{\bg,\rightarrow}_{S_0}(\hat{\bS})$).
\end{definition}

For the model with stationary boundary conditions $Z^{\ta,u}(n,m)$, the independence structure of the increments along the boundary propagates into the bulk vertices that are below the reference point. 
\begin{proposition}
\label{prop:ratio homogeneous quadrant} 
Consider the model with stationary boundary conditions $\bB^{\ta,u}_{-M,\nu}$ for arbitrary $\sigma$-finite measure $\nu$ on $\pd$. Then the joint law of partition functions along any down-right path $\hp=(\p_k)_{0\leq k\leq N}$ starting from $\p_0=(0,M)$ is a $\Wis^{\pm}$ random walk with $\nu$-distributed starting matrix. Precisely,  
    $$\Big( Z^{\ta,u}(\p_k)\Big) _{0\leq k\leq N}\sim \d\nu\Big( Z^{\ta,u}(\p_0)\Big) \times 
    \d\bR^{\bg(\hp),\bw(\hp)}_{Z^{\ta,u}(\p_0)} 
    \Big( 
    \big( Z^{\ta,u}(\p_k)\big)_{0\leq k\leq N} \Big),$$
    where $\bg=(\gamma_i)_{1\leq i\leq N}$ is such that
    \be\label{eq:labels homogeneous quadrant model}\gamma_i= \left\{  \begin{aligned}
\ta-u \quad & \text{if } \p_{i}-\p_{i-1}= (1,0) \\
\ta+u \quad & \text{if } \p_{i}-\p_{i-1}=(0,-1) 
\end{aligned}
\right., \quad 1\leq i\leq N.\ee
See Figure \ref{subfig:propagation of boundary conditions} for illustration. As a consequence, when $\nu$ is a probability measure, we have for all $n\in\NN$ and $ m\leq M$:
    \be\label{eq:expectation of partition function}
    \EE\left[\log\frac{\abs{Z^{\ta,u}(n,m)}}{\abs{Z^{\ta,u}(0,0)}}\right]=-n\psi_d(\ta-u)-m\psi_d(\ta+u),\ee
    where $\psi_d(\theta)$ is the d-variate digamma function $\psi_d(\theta):=\frac{\d}{\d\theta}\log \Gamma_d(\theta)=-\EE\big[\log\abs{\Wisv(\theta)}\big]$.
\end{proposition}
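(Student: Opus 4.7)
The plan is to prove the distributional identity by induction on down-right paths, starting from the boundary L-path $\hp^\star$ that descends the left boundary from $(0, M)$ to $(0, 0)$ and then runs to the endpoint $\p_N$ of $\hp$ along the bottom boundary, and pushing the path outward into the bulk one lattice square at a time via a local corner-flip identity. For $\hp^\star$ the claim is exactly Definition~\ref{def:Stationary boundary conditions}: conditionally on $B_{-M}$, the joint law of partition functions along $\hp^\star$ is a product of $\Wis(\theta+u)$ increments on the $M$ down-steps between $\mathbf{b}_{-M}$ and $\mathbf{b}_0$ and $\Wisv(\theta-u)$ increments on the right-steps past $\mathbf{b}_0$, which matches $\d\bR^{\bg(\hp^\star),\bw(\hp^\star)}_{B_{-M}}$ with the parameter assignment \eqref{eq:labels homogeneous quadrant model}. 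Any other down-right path $\hp$ with the same endpoints can be transformed into $\hp^\star$ by a finite sequence of elementary flips, each replacing three consecutive vertices $\p_{k-1}, \p_k, \p_{k+1}$ realizing a concave ``down-then-right'' corner at $\p_k$ (so $\p_{k-1}-\p_k = (0,1)$ and $\p_{k+1}-\p_k = (1,0)$) by the complementary convex ``right-then-down'' corner through $\p_k' := \p_k + (1,1)$, with $Z^{\theta,u}(\p_k')$ defined by the polymer recurrence from $Z^{\theta,u}(\p_{k-1})$, $Z^{\theta,u}(\p_{k+1})$, and a fresh bulk weight $W(\p_k') \sim \Wisv(2\theta)$.

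The crux of the induction is the following local corner-flip identity: if $b, c, a \in \pd$ satisfy $c \star b^{-1} \sim \Wis(\theta+u)$ and $a \star c^{-1} \sim \Wisv(\theta-u)$ jointly independently and independently of $b$, and $W \sim \Wisv(2\theta)$ is independent of everything, then $e := W \star (a + b)$ should satisfy
\[
e \star b^{-1} \sim \Wisv(\theta - u), \qquad a \star e^{-1} \sim \Wis(\theta + u),
\]
jointly independent and independent of $b$. To establish this, I would write down the joint density of $(b, c, a, W)$ with respect to $\d\mu^{\otimes 4}$ using \eqref{eq: IW density}, make the change of variable $W \mapsto e = W\star(a+b)$ (whose Jacobian with respect to $\d\mu$ is trivial by the $\star$-invariance baked into \eqref{eq:d mu}), and then integrate out $c$, checking that the resulting joint density of $(b, a, e)$ factors as $\P^-_{\theta-u}(e \star b^{-1})\,\P^+_{\theta+u}(a \star e^{-1})$ times the marginal density of $b$.

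The main obstacle is precisely this integration over $c$. In the scalar case $d=1$ it is immediate from Lukacs' theorem, but the excerpt explicitly warns that no direct matrix analogue of Lukacs is available, so there is no reason for the integrand to factorize cleanly. I would attempt the explicit density computation first, hoping that a non-obvious cancellation tied to the specific polymer kernel $W \star (a+b)$ yields the desired factorization (this would constitute a Lukacs-type identity valid only when the triple is reconstructed through this particular map); if that fails, the fallback is to bypass the local identity entirely and reorganize the argument around the strip model of Section~\ref{subsec:Model on a strip}, whose more symmetric two-layer Gibbsian structure admits invariance proofs without any matrix Lukacs input. This latter route appears to be the authors' main one.

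Finally, for the consequence \eqref{eq:expectation of partition function}, I would apply the first part of the proposition to a down-right path from $(0, M)$ to the bottom boundary that passes through $(n, m)$ (valid since $m \leq M$), and read off the marginal law of the two endpoints $Z^{\theta,u}(0,M)$ and $Z^{\theta,u}(n,m)$ from the sub-path joining them. Since $|x \star y| = |x||y|$, the log-determinant telescopes: $\log|Z^{\theta,u}(n,m)| - \log|Z^{\theta,u}(0,M)|$ is a sum of $M-m$ log-determinants of $\Wis(\theta+u)$ variables and $n$ log-determinants of $\Wisv(\theta-u)$ variables. Using $\EE[\log|\Wisv(\theta)|] = -\psi_d(\theta)$ (by definition) and $\EE[\log|\Wis(\theta)|] = \psi_d(\theta)$ (by inversion) gives $\EE[\log(|Z^{\theta,u}(n,m)|/|Z^{\theta,u}(0,M)|)] = (M-m)\psi_d(\theta+u) - n\psi_d(\theta-u)$. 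Subtracting the same identity for $(n,m)=(0,0)$, which equals $M\psi_d(\theta+u)$, eliminates the $M$-dependence and yields $-n\psi_d(\theta-u) - m\psi_d(\theta+u)$, as claimed.
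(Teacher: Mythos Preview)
Your primary approach works, and your worry about the $c$-integration is misplaced. The matrix Lukacs obstruction flagged in Remark~\ref{rmk:matrix Lukacs theorem} concerns a \emph{different} identity (the $(U'',V'')$ transformation there); the corner-flip you need is not Lukacs but a straightforward gamma integral. Carrying out your computation: the joint density of $(b,c,a,e)$ with respect to $\d\mu^{\otimes 4}$ is proportional to
\[
\nu(b)\,|c|^{2\theta}|b|^{-(\theta+u)}|a|^{-(\theta-u)}e^{-\tr[c(a^{-1}+b^{-1})]}\cdot |e|^{-2\theta}|a+b|^{2\theta}e^{-\tr[(a+b)e^{-1}]},
\]
and the $c$-integral is just \eqref{eq:laplace transform}, yielding $\Gamma_d(2\theta)\,|a^{-1}+b^{-1}|^{-2\theta}$. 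Using $|a^{-1}+b^{-1}|=|a+b|/(|a||b|)$, everything collapses to $\nu(b)\,\P^-_{\theta-u}(e\star b^{-1})\,\P^+_{\theta+u}(a\star e^{-1})$ with the correct constants. The factorization holds and your induction closes.

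This is essentially the paper's argument without its packaging. The paper embeds the quadrant into a strip (Section~\ref{subsec:Application to inhomogeneous quadrant model}) and organizes the same computation as the ``one-layer Cauchy identity'' (Lemma~\ref{lem:1-layer Cauchy}), whose one-line proof is exactly the $c$-integral above; the corner-flip is then the consistency of the one-layer Gibbs measure with the local Markov kernel (Definition~\ref{def: local Markov kernels equilibrium}). The strip embedding and the Gibbs-measure language add nothing for this particular proposition; they are there because the same scaffolding is reused for the two-layer maximal-current case, where a bare corner-flip does not suffice. Your direct route is shorter here and entirely valid.

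For \eqref{eq:expectation of partition function}, your telescoping argument is identical to the paper's (see \eqref{eq:log differences 1}--\eqref{eq:log differences 2}).
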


Proposition \ref{prop:ratio homogeneous quadrant} is a consequence of the slightly more general  Proposition \ref{prop:finite stationary measure for quadrant with boudnary} below, and is proven using the one-layer Gibbs measure formalism in Section \ref{subsec:Application to inhomogeneous quadrant model}. In the scalar case, the statement of the analogous result  \cite[Theorem 3.3 and Eq. (2.5)]{Seppalainen} is much more appealing.  Unfortunately, in the matrix case,  the constraints $\mathbf{p}_0 = (0, M)$ and $m \leq M$ are essential and cannot be relaxed when $d \geq 2$. Let us illustrate this through a minimal example.  
     
\begin{example}\label{eg:concret example}
Considering the simplest case of Proposition \ref{prop:ratio homogeneous quadrant} where $M=m=n=1$, which we refer to as the `one-step updates'.  Let  $\theta, u\in\RR$ be parameters such that  $\theta+u,\ \theta-u>\frac{d-1}{2}$. Consider independent random variables 
    $$  U  \sim \Wisv(\theta-u), \quad V  \sim \Wis(\theta+u), \quad  W \sim \Wisv(2\theta). $$ 
    Let $S$ be an arbitrary independent random variable on $\pd$. Define 
    \be\label{eq:expression of U'V'}
    U'=\bigg( W\star \Big( U\star(V\star S)+S\Big) \bigg)\star S^{-1}, \qquad 
    V'= \Big( U\star (V\star S)\Big)
    \star 
    \bigg( W\star \Big( U\star(V\star S)+S\Big) \bigg)^{-1}.\ee
Then $(U',V')$ has the same distribution as $(U,V)$.
\begin{figure}
    \centering
    \begin{subfigure}{0.45\textwidth}
    \centering
        \begin{tikzpicture}[scale=1] 
            \draw[-, very thick, orange] (4,0)-- (4.5,0);
            \draw[-, very thick, orange] (0,4)-- (0,4.5);
            \foreach \x in {1, 2, 3, 4}
                \draw[->, very thick, orange] (\x-0.93,0) -- (\x-0.05,0) ;
            \foreach \y in {   4}
                \draw[->,very thick, orange] (0,\y-0.93) -- (0,\y-0.05) ;
            \foreach \y in {1,2,3 }
                \draw[->,very thick, blue] (0,\y-0.05) -- (0,\y-0.93) ;
            \fill[green] (0,3) circle(0.1);
            \node[left,font=\footnotesize] at (0,3) {$S$};
            \foreach \x in {1, 2, 3 }
                \draw[->, very thick, orange!40] (\x-0.93,2) -- (\x-0.05,2) ;
            \foreach \y in {1,2}
                \draw[->,very thick, blue!40] (3,\y-0.05) -- (3,\y-0.93) ;
            \foreach \y in {0,2, 1, 4}
                \fill[black] (0,\y) circle(0.08);
            \foreach \x in {2, 1, 3, 4}
                \fill[black] (\x,0) circle(0.08);
            \foreach \x in {2, 1, 3, 4}
                \node[below,orange, font=\footnotesize] at (\x-0.5,0) {$\ta-u$};
            \foreach \y in {2, 1, 3 }
                \node[left,blue, font=\footnotesize] at (0,\y-0.5) {$\ta+u$};
            \foreach \y in {4 }
                \node[left,orange, font=\footnotesize] at (0,\y-0.5) {$\ta+u$};
            \foreach \x in {2, 1, 3}
                \node[above,orange!40, font=\footnotesize] at (\x-0.5,2) {$\ta-u$};
            \foreach \y in {2, 1  }
                \node[right,blue!40, font=\footnotesize] at (3,\y-0.5) {$\ta+u$};
            \foreach \x in {1, 2,  4}
                \draw (\x, 0) -- (\x,4.5) ;
            \foreach \y in {1,   3, 4}
                \draw (0, \y) -- (4.5, \y);
            \draw (3, 2) -- (4.5,2) ;
            \draw (3, 2) -- (3,4.5) ;
            \foreach \x in {1, 2, 3, 4} 
                \foreach \y in {1, 2, 3, 4}
                \fill[red] (\x,\y) circle(0.05);        
\node[left,red,font=\footnotesize] at (4,3.2) {$2\ta$};
\end{tikzpicture} 
\caption{}
         \label{subfig:propagation of boundary conditions}
    \end{subfigure}
    \hfill
    \begin{subfigure}{0.45\textwidth}
    \centering
        \begin{tikzpicture}[scale=1] 
        \fill[black] (0,0) circle(0.1);
        \fill[green] (0,2) circle(0.1);
        \fill[red] (2,2) circle(0.08);
        \fill[black] (2,0) circle(0.1); 
            \draw[->, very thick, orange] (0.1,0)-- (1.9 ,0);
            \draw[->, very thick, blue] (0,1.9)-- (0,0.1);
             \draw[->, very thick, orange!40] (0.1,2)-- (1.9,2); 
            \draw[->, very thick, blue!40] (2,1.9)-- (2,0.1);
            \draw[->, dotted,very thick, black!40] (0.07,0.07)-- (1.93 ,1.93);
            \node[above left ] at (0,2) {$ S$}; 
            \node[above right,red] at (2,2) {$W$};
            \node[left] at (0,1) {$ V$};
            \node[below] at (1,0) {$ U$};
            \node[above,black!40] at (1,2) {$ U'$};
            \node[right,black!40] at (2,1) {$ V'$};  
\end{tikzpicture}
\caption{}
         \label{subfig:one-step update}
    \end{subfigure}
    \caption{(a) Propagation of boundary conditions, different parameters for (inverse-)Wishart disorders are labeled aside. (b) One step update.}
    \label{fig:placeholder}
\end{figure}
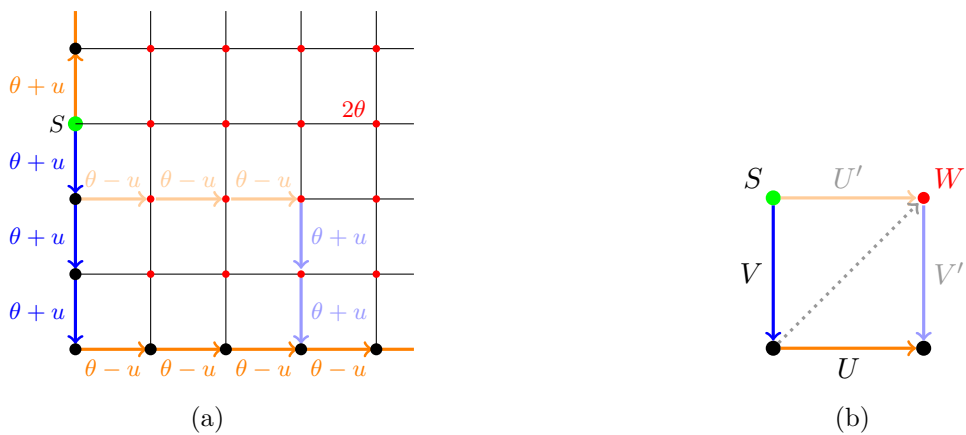
\end{example}
\begin{remark}\label{rmk:matrix Lukacs theorem}
    The property of (inverse-)Wishart distribution described in Example \ref{eg:concret example} is, to our knowledge, not known in the literature. Its scalar analogue is explained in an equivalent form in \cite[Lemma 3.2.]{Seppalainen}, which follows from a classical result  known as Lukacs' theorem \cite{Lukacs}.
     When $d=1$, the product $\star$ reduces to the usual multiplication of positive numbers, and the expression for $U', V'$ simplifies to
    \be\label{eq:expression of U'V' d=1}
    U'=W(UV+1),\qquad V'=\frac{UV}{W(UV+1)}.\ee
    Notably, in this case, the expressions for $U'$ and $V'$   \eqref{eq:expression of U'V' d=1} do not depend on the value of $S$. The identity in distribution $(U',V')\overset{d}{=}(U,V)$ holds even without any hypothesis on the reference matrix (in this case a positive scalar) $S$, in particular without requiring independence  between $S$ and increments $U, V$. 
    For $d\geq 2$, our methods differs from that of \cite{Seppalainen}. Let $U,V,W$ be as in Example \ref{eg:concret example}. The matrix version of Lukacs' theorem \cite{OlkinRubin} implies that if we define
    \be U''= \Big( U\star \big( U^{-1}+V \big)\Big)\star W, \qquad V''= \Big( V\star \big( U^{-1}+V \big)^{-1}\Big)\star W^{-1}.\ee
Then $(U'',V'')$ has the same distribution as $(U,V)$. However, it is not clear how this identity could be used to deduce the stationary structure of the one-step update in the polymer model.    
     \end{remark}
     
\begin{remark}
\label{rmk:markovian issue quadrant}
    When $d=1$,  the increments (i.e. successive ratios of partition functions) themselves form a Markov process under one-step updates. Hence, it is more common in the literature to study the increment process of the log-gamma polymer directly. Its stationary probability measure is given by products of independent inverse-gamma random variables. This  independence  structure propagates into the whole bulk lattice without constraints on the starting point $\p_0$ of the down-right path \cite[Theorem 3.3]{Seppalainen}. The identity \eqref{eq:expectation of partition function} also holds for all $m, n \in \mathbb{N}$; see \cite[Eq. (2.5)]{Seppalainen}. 

    When $d\geq 2$, however, the expressions for $(U',V')$ in  \eqref{eq:expression of U'V'} depend on $S$ and do not simplify due to  non-commutativity. That is, $(U', V')$ is not measurable with respect to the $\sigma$-algebra generated by $(U, V, W)$. Consequently, we have to work with the whole process of partition functions, or equivalently, the increments along with the reference matrix at some reference point, as considered in Definition \ref{def:Stationary boundary conditions}. In Section \ref{subsubsec:Stationary measure for model with mu-distributed reference matrix}, we provide stationary measures for such joint processes.
    	 
    \end{remark}

\subsubsection{Stationary measures for the model with $\mu$-distributed reference matrix}\label{subsubsec:Stationary measure for model with mu-distributed reference matrix} 

Recall that $\mu$ is the $\sigma$-finite measure on $\pd$ given by \eqref{eq:d mu}. The main stationarity result for the model with boundary conditions is the following.
\begin{theorem}\label{thm:quadrant stationary measure}
    For arbitrary reference point $-M$, the stationary boundary conditions
    $\bB^{\ta,u}_{-M,\mu}$ is a stationary measure for the model in the sense given in Definition \ref{def:quadrant model with bc}.
\end{theorem}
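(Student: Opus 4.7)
The plan is to reduce, by induction on $l$, to the one-step shift case $l=1$; this reduction is routine since the bulk noises $W(n,m)$ encountered along each new diagonal are fresh, so single-step stationarity iterates. For the $l=1$ case, assume $\bB \sim \bB^{\theta,u}_{-M,\mu}$ and split the shifted boundary $\tau_1\hp_b$ around the shifted reference point $(1,M+1)$ into a ``lower portion'' (the segment $(1,M),(1,M-1),\ldots,(1,1)$ followed by the rightward arm $(k,1)$ for $k\geq 2$) and an ``upper portion'' (the arm $(1,j)$ for $j\geq M+1$).

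The lower portion is traversed by the down-right path $\hp=(0,M),(1,M),(1,M-1),\ldots,(1,1),(2,1),\ldots$ starting at the original reference $(0,M)$. Proposition~\ref{prop:ratio homogeneous quadrant} applies and yields the joint law along this path: $Z(1,M)=U\star B_{-M}$ with $U\sim\Wisv(\theta-u)$ independent of $B_{-M}\sim\mu$, then independent Wishart increments descending to $(1,1)$, then independent inverse-Wishart increments extending rightward. This gives exactly the required distribution for the middle segment and the shifted right arm of $\tilde{\bB}$.

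For the upper portion, the vertices $(1,j)$ with $j>M$ lie above $(0,M)$ and cannot be reached by a down-right path from the original reference. The key ingredient here is the linearity identity $X\star B+U\star B=(X+U)\star B$. Writing $B_{-(M+1)}=X_1\star B_{-M}$ with $X_1\sim\Wisv(\theta+u)$ (independent of $B_{-M}$, of $U$, and of $W(1,M+1)$), the polymer recurrence gives
\[
Z(1,M+1)=W(1,M+1)\star\bigl((X_1+U)\star B_{-M}\bigr).
\]
Since $X_1+U\in\pd$ is independent of $B_{-M}\sim\mu$, the $\star$-invariance of $\d\mu$ yields $(X_1+U)\star B_{-M}\sim\mu$, and applying invariance once more with $W(1,M+1)$ independent gives $Z(1,M+1)\sim\mu$. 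Iterating this argument upward on $j$ --- using successive upper-boundary increments $X_j\sim\Wisv(\theta+u)$ and fresh noises $W(1,M+j)$ --- propagates $Z(1,j)\sim\mu$ to all $j\geq M+1$ and, together with explicit bookkeeping of the nested $\star$-products, produces candidates for the upper-arm increments.

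The main obstacle is establishing the \emph{joint} independence of the upward increments, as well as their joint independence from the lower portion. In the scalar case this reduces to Lukacs' theorem, but as noted in Remark~\ref{rmk:matrix Lukacs theorem}, the matrix analogue (Olkin--Rubin) does not suffice, essentially because the factors $(X_1+U)$ and $B_{-M}$ in the expression for $Z(1,M+1)$ are independent while neither is independent of $Z(1,M+1)$ in the non-commutative setting. I expect this to be handled via the one-layer Gibbs measure formalism developed in Section~\ref{subsec:Application to inhomogeneous quadrant model}: write the joint density of $(\bB,\{W(n,m)\})$ with respect to $\d\mu^{\otimes}$, change variables to $\tilde{\bB}$ together with residual bulk noises (Jacobian $1$ by the $\star$-invariance of $\d\mu$), and verify that after integrating out the residuals, the marginal density of $\tilde{\bB}$ factorises as the density of $\bB^{\theta,u}_{-M,\mu}$. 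A standard truncation argument then extends the conclusion from finite sub-boundaries to the full infinite L.
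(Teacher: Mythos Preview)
Your proposal correctly identifies the central difficulty---the joint law of the upper-arm increments and their independence from the lower portion---but does not resolve it. The $\star$-invariance argument you give establishes only the marginal $Z(1,M+1)\sim\mu$; it says nothing about the joint distribution of the increments $Z(1,j)\star Z(1,j+1)^{-1}$ for $j\geq M$ or their independence from the lower arm. You acknowledge this and defer to ``the one-layer Gibbs measure formalism developed in Section~\ref{subsec:Application to inhomogeneous quadrant model}'', but that formalism \emph{is} the paper's proof, so the proposal as it stands is not an independent argument.

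The paper's route avoids your upper/lower split entirely, and the key step you are missing is Corollary~\ref{cor:stationary bc reference point}: since the reference matrix is $\mu$-distributed, one may take $M=0$ without loss, so that $\bB^{\theta,u}_{-M,\mu}=\bB^{\theta,u}_{0,\mu}$ is a genuine two-sided inverse-Wishart walk. With the reference at the origin, every finite piece of the shifted boundary $\tau_1\hp_b$ lies on a single down-right path $(0,m)\to(1,m)\to\cdots\to(1,1)\to\cdots\to(n,1)\to(n,0)$, and Theorem~\ref{thm:inhomogeneous quadrant} (proved by embedding the rectangle $R_{nm}$ into a strip and invoking the one-layer Gibbs consistency of Theorem~\ref{thm: inhomogeneous equilibrium strip}) gives the full joint law along that path at once. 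Lemma~\ref{lem:arbitrary starting point} then moves the $\mu$-distributed reference from $(0,m)$ to $(1,1)$, and Ionescu--Tulcea patches the finite segments. The reduction to $M=0$ is what makes the argument uniform: there is no ``above the reference point'' to handle separately, because the reference sits at the corner of the quadrant.
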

Theorem \ref{thm:quadrant stationary measure} is a consequence of the slightly more general Theorem \ref{thm:inhomogeneous quadrant}, and is proven in Section \ref{subsec:Application to inhomogeneous quadrant model}.

We show in Corollary \ref{cor:stationary bc reference point} that, when the reference matrix is $\mu$-distributed, the stationary boundary conditions do not depend on the choice of the reference point. In other words, as $\sigma$-finite measures on $\pd^{\ZZ}$, we have $\bB^{\ta,u}_{-M,\mu}=\bB^{\ta,u}_{0,\mu}$ for all $M\geq 0$. When the reference point is chosen as $M=0$, the stationary measure has a simple characterization given by the following definition.
 
\begin{definition}[Two-sided inverse-Wishart random walk]\label{def:two sided random walk}
Let $\ba=(\a_n)_{n\geq 1}$, $\bb=(\b_n)_{n\geq 1}$ be two families of  parameters such that for all $n\geq 1$, $\a_n, \b_n> \frac{d-1}{2}$. The  two-sided inverse-Wishart random walk starting from $S_0$ 
refers to the stochastic process 
$\bS=\lb S_k\rb_{ k\in\ZZ} \in\pd^{\ZZ}$ 
with independent forward or backward inverse-Wishart increments: for $k\geq 1$,
\begin{align*}
S_k\star S_{k-1}^{-1}  \sim \Wisv(\a_k),\quad
    S_{-k}\star S_{-(k-1)}^{-1}  \sim \Wisv(\b_k), 
\end{align*}
where $S_0\in\pd$ is a (possibly random) matrix which is independent of the increments.  
Identify $\bS=(S_0, \hat{\bS}^{(0)})$. For every realization of $S_0$,  
we denote the marginal distribution of $\hat{\bS}^{(0)}$  by 
$ \d\bR^{\ba,\bb }_{S_0} $, i.e. the probability measure on $\pd^{\mathbb{Z}\setminus\{0\}}$ with density 
$$\bR^{\ba,\bb  }_{S_0} (\hat{\bS}^{(0)})= 
\prod_{i=1}^{\infty}\P^{-}_{\a_i}\Big( S_i\star S_{i-1}^{-1}\Big) \prod_{j=1}^{\infty} \P^{-}_{\b_j}\Big( S_{-j}\star S_{-(j-1)}^{-1}\Big) 
$$
with respect to the reference measure 
$\d\mu(\hat{\bS}^{(0)})=\prod_{i\neq0}\d\mu(S_i)$.

In the special case $\ba=(\a)^{\NN}$, $\bb=(\b)^{\NN}$, we refer to the resulting process as the drifted two-sided inverse-Wishart random walk and denote its distribution by $\d\mathbf{R}^{\alpha,\beta}_{S_0}$. 
Note that under $\d\mathbf{R}^{\alpha,\beta}_{S_0}$, the marginal distribution of $(S_k)_{1 \leq k \leq N}$ is the $N$-step inverse-Wishart random walk $\d\mathbf{R}^{(\alpha)^N,\rightarrow}_{S_0}$ defined in Definition \ref{def:IW random walk}.
\end{definition} 

When the reference point is chosen as $M = 0$, the stationary boundary conditions reduce to a drifted two-sided inverse-Wishart random walk,  
$$\bB^{\ta,u}_{0, \delta_{S_0}}=\d\bR^{\ta-u,\ta+u}_{S_0}.$$
Hence, Theorem \ref{thm:quadrant stationary measure} can be can be restated as the stationarity of the drifted two-sided inverse-Wishart random walk with   $\mu$-distributed reference matrix  for the model with boundary conditions.
This result  was first stated in an equivalent form in \cite{KrajenbrinkLeDoussal}, where it was derived using physics methods. Our work provides a rigorous confirmation of the physics prediction.  

\begin{remark}
    Stationary measures which are not probability measures appear in related contexts. In particular, in the context of continuum directed polymers in dimension $1+1$,  \cite[Theorem 1.2]{FunakiQuastel} showed that the family of $\sigma$-finite measures given by the exponential of Brownian motion with arbitrary drift and Lebesgue-distributed reference point is stationary for the stochastic heat equation on the real line (see also the earlier work \cite{BertiniGiacomin} for an equivalent statement). It is expected in \cite[Remark 1.1]{FunakiQuastel} and then proven in  \cite{JanjigianRassoul-AghaSeppäläinen,DunlapSorensen} that these constitute all extremal invariant measures (up to multiplicative constants). 
 In our case, the classification of stationary measures is an important problem, which we do not address, but is related to the discussion about the value of the constant in law of large numbers in Section  \ref{subsec:Back to model with delta boundary condition}. 
\end{remark}

\subsection{Law of large numbers for the model of \cite{AristaBisiOConnell}}
\label{subsec:Back to model with delta boundary condition}
In this section, we consider the model with delta boundary condition, and we discuss analogues of   \eqref{eq:cv by free energy d=1} and \eqref{eq:identify the limit d=1} for general $d\geq 1$. When $d\geq 2$, due to the non-commutative nature of the model, the partition functions of inverse-Wishart polymer $Z_d^{\ta}(n,m)$ no longer writes as a sum over paths \eqref{eq:sum over path},  and the sequence $\lb\log\abs{Z_d^{\ta}(n,n)}\rb_{n\geq1}$ is no longer subadditive. Hence, the convergence of the normalized free energy is a non-trivial problem.   We propose the following conjecture:
\begin{conjecture}[Law of large numbers]\label{conj: LLN quadrant}
    Fix $d\geq 1$ and a parameter $\theta$ such that $2\ta>\frac{d-1}{2}$, and consider the  model with delta boundary condition (given by \eqref{eq:step boundary condition}), that is the model defined in \cite{AristaBisiOConnell}. Recall the definition of $f_d^{\theta}(n)$ from \eqref{eq:deffdn}. Then there exists a deterministic constant $f_d^{\ta}$ such that 
\be\label{eq: quadrant LLN conj}f_d^{\ta}(n) \xrightarrow{n\rightarrow\infty}
f_d^{\ta} \qquad \text{a.s.}
\ee
\end{conjecture}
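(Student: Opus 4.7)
The plan is to sandwich $f_d^{\ta}(n)$ between two scalar polymer free energies via matrix inequalities, and then close the gap using the stationary model from Theorem~\ref{thm:quadrant stationary measure}.

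For the lower bound, set $Y(n,m) := \abs{Z_d^{\ta}(n,m)}^{1/d}$. Using $\abs{W\star X}=\abs{W}\cdot\abs{X}$ and the Minkowski determinant inequality $\abs{A+B}^{1/d}\geq \abs{A}^{1/d}+\abs{B}^{1/d}$ for $A,B\in\pd$, the recursion of Definition~\ref{def:quadrant model with bc} yields $Y(n,m)\geq \abs{W(n,m)}^{1/d}\bigl(Y(n-1,m)+Y(n,m-1)\bigr)$. Iterating, $Y(n,m)\geq \tilde Y(n,m)$, where $\tilde Y(n,m)=\sum_{\pi:(1,1)\to(n,m)}\prod_{(i,j)\in\pi}\abs{W(i,j)}^{1/d}$ is a scalar positive-temperature polymer whose weights $\abs{W(i,j)}^{1/d}$ are i.i.d.\ with an explicit law (a product of independent Gamma variables by \eqref{eq:d-variate gamma function}). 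Kingman's subadditive ergodic theorem then gives $\tfrac{1}{n}\log\tilde Y(n,n)\to \tilde f_-$ almost surely, whence $\liminf_n f_d^{\ta}(n)\geq d\tilde f_-$. A parallel argument with the operator norm $\|\cdot\|$, based on $\|W\star X\|\leq \|W\|\|X\|$ and $\|A+B\|\leq \|A\|+\|B\|$, produces a majorizing scalar polymer $\bar Z$ with weights $\|W(n,m)\|$; combined with $\log\abs{Z_d^{\ta}(n,n)}\leq d\log\|Z_d^{\ta}(n,n)\|$, this yields $\limsup_n f_d^{\ta}(n)\leq d\tilde f_+$ for some deterministic $\tilde f_+\geq\tilde f_-$.

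To upgrade the sandwich into an almost-sure limit, I would invoke the stationary model $Z^{\ta,u}$ of Theorem~\ref{thm:quadrant stationary measure}. The sequence of diagonal log-ratios $X_k:=\log\bigl(\abs{Z^{\ta,u}(k,k)}/\abs{Z^{\ta,u}(k-1,k-1)}\bigr)$ is stationary under the diagonal shift $\tau_1$, with mean $-\psi_d(\ta-u)-\psi_d(\ta+u)$ by \eqref{eq:expectation of partition function}. A Birkhoff-type ergodic theorem applied to $(X_k)$, after reducing the $\sigma$-finite reference measure $\mu$ to a probability one by restriction to a subset of $\pd$ of finite $\mu$-measure, would yield $\tfrac{1}{n}\log\abs{Z^{\ta,u}(n,n)}\to -\psi_d(\ta-u)-\psi_d(\ta+u)$ almost surely in the stationary setting. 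Transferring this convergence to the delta-boundary model relies on Loewner monotonicity of the recursion in the boundary data ($\star$ and matrix addition both preserve $\preceq$ on $\pd$) combined with a concentration estimate of the form $\mathrm{Var}\bigl(\log\abs{Z_d^{\ta}(n,n)}\bigr)=o(n^2)$, plausibly obtained via an Efron--Stein or Azuma-type martingale inequality adapted to functions of the i.i.d.\ inverse-Wishart bulk disorder. Optimizing over $u$ would then pin down $f_d^{\ta}=-2\psi_d(\ta)$, the natural generalization of \eqref{eq:identify the limit d=1}.

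The main obstacle is exactly this comparison between delta-boundary and stationary partition functions. In the scalar case one leverages the sum-over-paths representation \eqref{eq:sum over path} to bound boundary contributions by a single diagonal weight; for $d\geq 2$ this representation is unavailable and a quantitative substitute must be engineered purely from Loewner monotonicity, which is much less flexible. A secondary challenge is verifying ergodicity of the $\sigma$-finite stationary measure $\bB^{\ta,u}_{0,\mu}$ under the diagonal shift, which is not immediately standard and would require a direct analysis of the shift-invariant $\sigma$-algebra, most likely via mixing properties of the Wishart and inverse-Wishart increments that drive the stationary boundary walk.
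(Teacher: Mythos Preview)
This statement is a \emph{conjecture} in the paper, not a theorem; the paper does not claim a proof of \eqref{eq: quadrant LLN conj} for $d\geq 2$. So you are not being asked to reproduce an existing argument, but to resolve an open problem. Your proposal does not do this, and in fact its conclusion is known to be false.

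The decisive issue is your final step: ``Optimizing over $u$ would then pin down $f_d^{\ta}=-2\psi_d(\ta)$.'' The paper explicitly refutes this value in Section~\ref{subsec:Back to model with delta boundary condition}: via the point-to-line martingale $M^\ta(k)$ and Minkowski's inequality, it proves the upper bound \eqref{eq:violated inequality}, namely $f_d^\ta \leq -2d\log(\ta - \tfrac{d+1}{4})$, and checks that for $d\geq 3$ this is strictly smaller than $-2\psi_d(\ta)$. Numerical evidence in the paper suggests $f_d^\ta < -2\psi_d(\ta)$ already for $d\geq 2$. So your chain of reasoning, even if each intermediate step could be made rigorous, would terminate in a contradiction. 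This means at least one of the heuristics upstream must fail, and the paper tells you which: the comparison between delta and stationary boundaries (your acknowledged ``main obstacle'') cannot work as in the scalar case, precisely because the stationary expectation $-\psi_d(\ta-u)-\psi_d(\ta+u)$ does not govern the delta-boundary free energy. Relatedly, the stationary boundary conditions of Theorem~\ref{thm:quadrant stationary measure} only exist for $\ta>\tfrac{d-1}{2}$, whereas the model and the conjecture are stated for $2\ta>\tfrac{d-1}{2}$; the gap $\ta\in(\tfrac{d-1}{4},\tfrac{d-1}{2}]$ is already flagged in the paper as a regime with no known stationary structure.

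Your sandwich bounds via $\abs{\cdot}^{1/d}$ and $\|\cdot\|$ are correct as stated and yield deterministic a.s.\ bounds $d\tilde f_-\leq \liminf f_d^\ta(n)$ and $\limsup f_d^\ta(n)\leq d\tilde f_+$, but there is no reason for these scalar free energies to coincide (the weights $\abs{W}^{1/d}$ and $\|W\|$ have genuinely different laws), so this alone does not give convergence. The Birkhoff step is also problematic beyond ergodicity: the increments $X_k$ are stationary under a $\sigma$-finite, not a probability, measure, and restricting the reference matrix to a set of finite $\mu$-mass breaks the shift-invariance you need. The paper's Remark~\ref{rmk:markovian issue quadrant} further explains why the scalar increment-process machinery does not transfer: for $d\geq 2$ the increment process is not autonomous Markov, so there is no natural probability-measure dynamics on increments to which Birkhoff would apply.
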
 

For $d = 1$, the limit is given by \eqref{eq:identify the limit d=1}. For $d \geq 2$, the limit remains unknown.
Note that the model is integrable in the sense that the probability distribution of the  partition function is given by a marginal of the matrix Whittaker measure \cite[Corollary 4.10]{AristaBisiOConnell}. In principle, the limit could therefore be determined via asymptotic analysis of this measure. However, the matrix Whittaker measure is significantly more complicated than its scalar analogue, and its analysis remains a challenge. In any case, there should exist a simpler way to determine the limit. 

Building on \cite{Seppalainen}'s approach for the scalar case, along with the boundary partition function expectation \eqref{eq:expectation of partition function} and the explicit form of $f_1^{\theta}$ in \eqref{eq:identify the limit d=1},  it is tempting to conjecture that for general $d\geq 1$ and all $\ta>\frac{d-1}{2}$,
\begin{equation}
f_d^{\ta}\overset{?}{=}-2\sup_{\frac{d-1}{2}-\ta<u<\ta-\frac{d-1}{2}}\Big(\psi_d(\ta-u)+\psi_d(\ta+u) \Big)=-2\psi_d(\ta).
\label{eq:wrong}
\end{equation}
This turns out to be false. First of all, we observe that \eqref{eq:wrong} would contradict Conjecture \ref{conj: LLN quadrant}: indeed, as $\theta$ goes to $\frac{d-1}{2}$, $\psi_d(\theta)$ diverges, while we expected that $f_d^{\ta}$ is a finite constant for all $\ta>\frac{d-1}{4}$. This is related to the issue, already pointed out in \cite{KrajenbrinkLeDoussal}, that we have no information about stationary measures of the model when $\frac{d-1}{4}<\ta\leq  \frac{d-1}{2}$. 

Furthermore, we can prove that $f_d^{\ta}\neq -2\psi_d(\ta)$, at least for $d\geq 3$, without relying on Conjecture \ref{conj: LLN quadrant}. By introducing a well-chosen martingale, we derive an upper bound for $f^{\ta}_d$ which is strictly smaller than $-2\psi_d(\ta)$. 

For $k\geq 1$, let $\mathcal{F}_k$ be the $\sigma$-algebra generated by the random variables $\lb W(i,j)\rb_{2\leq i+j\leq k}$.
We define the normalized point-to-line partition function 
$$M^{\ta}(k):=\frac{ \sum_{i+j=k} Z^{\ta}(i,j)}{(2c(2\ta))^{k-1}} ,$$
where for $\ga>\frac{d+1}{2}$, $c (\ga)=\frac{1}{\ga-\frac{d+1}{2}}$ is the constant such that $\EE[\Wisv(\ga)]= c(\ga)\id_d$ -- see \cite[Lemma 7.1.1]{Anderson}. Hence, our model is defined as long as $\ta>\frac{d-1}{4}$ but the expectation is finite only when $\ta>\frac{d+1}{4}$. As in the scalar case, the point-to-line partition function is a martingale.
\begin{lemma}
    $\lb M(k)\rb_{k\geq 1}$ is a $\pd$-valued martingale with respect to the filtration $(\mathcal{F}_k)_{k\geq1}$.
\end{lemma}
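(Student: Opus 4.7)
The plan is to verify the three requirements for a $\pd$-valued martingale: measurability, integrability, and the conditional expectation identity. Measurability is immediate, since the defining recurrence expresses $Z^{\ta}(i,j)$ as a measurable function of $\{W(a,b) : a+b \leq i+j\}$, so $M^{\ta}(k)$ is $\mathcal{F}_k$-measurable and takes values in $\pd$ as a positive scalar multiple of a sum of matrices in $\pd$.

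The main identity I would establish first is the following linearization: for any $\pd$-valued random variable $X$ independent of $W \sim \Wisv(2\ta)$,
\[
\EE[W \star X \mid X] \;=\; X^{1/2} \EE[W] X^{1/2} \;=\; c(2\ta)\, X.
\]
This uses $W \star X = X^{1/2} W X^{1/2}$ from the definition of $\star$, the assumption $2\ta > (d+1)/2$ ensuring $\EE[W] = c(2\ta)\id_d$ is finite, and $X^{1/2}\id_d X^{1/2} = X$. Applied to the defining recurrence at a vertex $(i,j)$ with $i+j = k+1$, where $W(i,j)$ is independent of $\mathcal{F}_k$ and $Z^{\ta}(i-1,j) + Z^{\ta}(i,j-1)$ is $\mathcal{F}_k$-measurable, this yields
\[
\EE[Z^{\ta}(i,j) \mid \mathcal{F}_k] \;=\; c(2\ta)\,\bigl(Z^{\ta}(i-1,j) + Z^{\ta}(i,j-1)\bigr),
\]
and integrability then propagates through the levels by induction.

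Summing this identity over $(i,j)$ with $i+j = k+1$, a double-counting argument shows that each level-$k$ vertex $(a,b)$ with $a,b \geq 1$ is hit exactly twice: once as $(i-1,j)$ from the parent $(a+1,b)$, and once as $(i,j-1)$ from the parent $(a,b+1)$. For $k \geq 2$, the delta boundary condition forces $Z^{\ta}(0,k) = Z^{\ta}(k,0) = 0$, so the unmatched corner contributions vanish, and
\[
\EE\Bigl[\sum_{i+j=k+1} Z^{\ta}(i,j) \,\Big|\, \mathcal{F}_k\Bigr] \;=\; 2\,c(2\ta) \sum_{a+b=k} Z^{\ta}(a,b).
\]
Dividing by $(2c(2\ta))^k$ yields $\EE[M^{\ta}(k+1) \mid \mathcal{F}_k] = M^{\ta}(k)$.

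The genuinely non-trivial step is the non-commutative linearization $\EE[W \star X \mid X] = c(2\ta)\,X$: the sandwich structure of $\star$ would normally obstruct a clean formula for the expectation of a product, but since the expectation acts only on $W$ and $\EE[W]$ is a scalar multiple of the identity, the conjugation $X^{1/2}(\cdot)X^{1/2}$ commutes with $\EE$ and $\id_d$ gets absorbed. Everything else is bookkeeping: the double-counting at each level, the vanishing of boundary contributions, and a separate inspection of the base case $k=1$ (where $Z^{\ta}(1,0) = \id_d$ breaks the clean cancellation and the martingale identity should be interpreted starting from a suitable level).
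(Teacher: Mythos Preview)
Your approach is essentially identical to the paper's: both compute $\EE[M^{\ta}(k+1)\mid\mathcal F_k]$ by applying the recurrence at each level-$(k+1)$ vertex, using independence of $W(i,j)$ from $\mathcal F_k$ together with $\EE[W]=c(2\ta)\id_d$, and then a double-counting of level-$k$ vertices. Your linearization $\EE[W\star X\mid X]=c(2\ta)X$ is exactly the mechanism the paper uses implicitly when passing from $\EE[W(i,j)\star(\cdots)]$ to $\EE[W(i,j)]\star(\cdots)=c(2\ta)(\cdots)$.

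Your flag on the base case $k=1$ is well-taken and in fact sharper than the paper's own treatment. With the delta initial condition one has $Z^{\ta}(1,0)=\id_d\neq 0_d$, so the double-count at level $1$ does not cleanly give $2\sum_{a+b=1}Z^{\ta}(a,b)$: concretely $\EE[M^{\ta}(2)\mid\mathcal F_1]=\tfrac12\id_d$ while $M^{\ta}(1)=\id_d$. The paper's proof glosses over this (its index range $i=0,\ldots,k+1$ also formally invokes undefined quantities like $W(0,k+1)$). For the downstream application this does not matter, since $(M^{\ta}(k))_{k\geq 2}$ is a genuine martingale and the supermartingale convergence bound $\EE[M^{\ta}(\infty)]\leq\EE[M^{\ta}(2)]=\tfrac12\id_d\leq\id_d$ still yields the needed inequality. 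So your hedging is the correct move.
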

\begin{proof} For all $k\geq 1$, it is clear that $M(k)$ is $\mathcal{F}_k$-adapted, and  
\begin{equation*}   
\begin{split}\EE\left[M^{\ta}(k+1)|\mathcal{F}_k\right]
&=\sum_{i=0}^{k+1} \EE\left[ W(i,k+1-i)\star \lb Z^{\ta}(i-1,k+1-i)+ Z^{\ta}(i,k-i)\rb \Big|\mathcal{F}_k\right]\\
        &=\sum_{i=0}^{k+1} \EE[ W(i,k+1-i)]\star \lb Z^{\ta}(i-1,k+1-i)+ Z^{\ta}(i,k-i)\rb\\
        &=c(2\ta)\sum_{i=0}^{k+1}  \lb Z^{\ta}(i-1,n+1-i)+ Z^{\ta}(i,n-i)\rb\\
        &=2c(2\ta) M^{\ta}(k).
    \end{split}
\end{equation*} 
\end{proof}
By the matrix supermartingale convergence theorem \cite[Theorem 3.2]{WangRamdas}, 
$(M^{\ta}(k))_{k\geq 1}$
converges to a limit 
$M^{\ta}({\infty})\in\overline{\pd}$ 
almost surely, with 
$\EE[M^{\ta}({\infty})]\leq\EE[M^{\ta}(1)]=\id_d$. 
On the other hand, recall  Minkowski's determinant inequality \cite[Thm 7.8.21]{HornJohnson}:
$$\abs{A}^{\frac{1}{d}}+\abs{B}^{\frac{1}{d}}\leq\abs{A+B}^{\frac{1}{d}} \qquad \text{for all } A,B\in\pd.$$  
One deduces that for all $n\geq 1$, $\abs{Z^{\ta}(n,n)}\leq \abs{\sum_{i+j=2n} Z^{\ta}(i,j)}$.  Therefore, 
\begin{equation*}\frac{\log\abs{Z^{\ta}(n,n)}}{n} \leq \frac{ \log\abs{\sum_{i+j=2n} Z^{\ta}(i,j)}}{n} = \frac{\log\abs{M^{\ta}(2n)}+\log\lb(2c(2\theta))^{2nd}\rb}{n}, 
		\end{equation*}
so that recalling the notation \eqref{eq:deffdn}, 	
\begin{equation}
	f_d^{\theta}(n) \leq\frac{\log \vert M^{\theta}(2n) \vert}{n} -  2d\log \lb\ta-\frac{d+1}{4}\rb.
	 \label{eq:inequalitymatrix} 
\end{equation}
As a consequence, if the convergence \eqref{eq: quadrant LLN conj} holds, using the supermartingale convergence theorem,  we would obtain that 
$$ f_d^{\theta} \leq \limsup_{n\to\infty}  \frac{\log \vert M^{\theta}(2n) \vert}{n}  -2d\log \lb\ta-\frac{d+1}{4}\rb, $$
where the limsup must belong to $[-\infty,0]$, so that for all $\ta>\frac{d+1}{4}$, 
\be\label{eq:violated inequality}f_{d}^{\ta}\leq -2d\log\lb\ta-\frac{d+1}{4}\rb.\ee
In particular for $d\geq 3$, the value of $f_d^{\ta}$ cannot equal $-2\psi_d(\ta)$ since $\psi_d(\theta)> -2d\log\lb\ta-\frac{d+1}{4}\rb$  for all $\ta>\max\big(\frac{d-1}{2},\frac{d+1}{4}\big)=\frac{d-1}{2}$.
Moreover, our numerical simulations suggest that the convergence \eqref{eq: quadrant LLN conj} does hold,  and that $f_d^{\ta}< -2\psi_d(\ta)$ for $d\geq2$. 

\begin{remark}
In 	the argument above, instead of using the matrix supermartingale convergence theorem, we could also have used the concavity of the function $M \mapsto \log\vert M\vert$ so that using \eqref{eq:inequalitymatrix} and Jensen's inequality, for $\ta>\frac{d-1}{4}$, 
$$ \mathbb E\left[ f_d^{\theta}(n) \right] \leq \frac{\log \left\vert \mathbb E\left[ M^{\theta}(2n) \right]  \right\vert}{n} - 2d\log \lb\ta-\frac{d+1}{4}\rb = \frac{\log \left\vert  \tfrac 1 2 \id_d   \right\vert}{n} - 2d\log \lb\ta-\frac{d+1}{4}\rb  $$
and letting $n$ go to infinity, we arrive at the same conclusion.
\end{remark}

\subsection{Model on a strip}\label{subsec:Model on a strip}
The log-gamma polymer on a strip introduced in \cite{BarraquandCorwinYang} extends the classical log-gamma polymer to a new geometric setting. In this section, we consider the inverse-Wishart polymer on a strip. 
\subsubsection{The model}

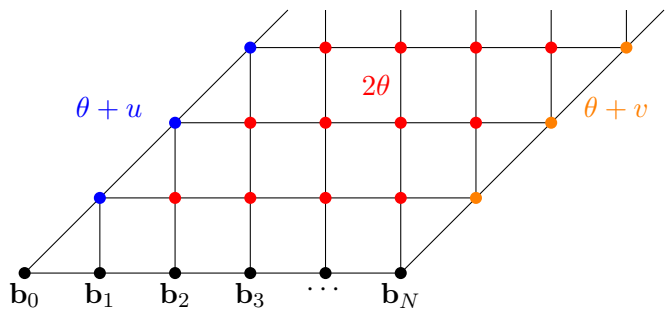
\begin{figure}
    \centering
    \begin{tikzpicture}[scale=1]
\draw (0,0)--(3.5,3.5);
\draw  (5,0)--(8.5,3.5);
\draw  (0,0)--(5,0);
\draw  (1,1)--(6,1);
\draw  (2,2)--(7,2);
\draw  (3,3)--(8,3);   
\draw  (1,0)--(1,1);
\draw  (2,0)--(2,2);
\draw  (3,0)--(3,3);
\draw  (4,0)--(4,3.5);
\draw  (5,0)--(5,3.5);
\draw  (6,1)--(6,3.5);
\draw  (7,2)--(7,3.5);
\draw  (8,3)--(8,3.5); 
\foreach \x in {0,2, 1, 3, 4,5}
                \fill[black] (\x,0) circle(0.08);
\foreach \x in {2, 1, 3}
                \fill[blue] (\x,\x) circle(0.08);
\foreach \x in {2, 1, 3}
                \fill[orange] (\x+5,\x) circle(0.08);
\foreach \x in {1, 2, 3} 
                \foreach \y in {1, 2, 3, 4}
                \fill[red] (\x+\y,\x) circle(0.08);
\node[below] at (0,0) { $\mathbf{b}_0$};
\node[below] at (1,0) {$\mathbf{b}_1$};
\node[below] at (2,0) { $\mathbf{b}_2$};
\node[below] at (3,0) { $\mathbf{b}_3$};
\node[below] at (4,0) { $\cdots$};
\node[below] at (5,0) { $\mathbf{b}_N$};

\node[left ,blue] at (1.7,2.2) {$\ta+u$};
\node[right,orange] at (7.3,2.2) {$\ta+v$};
\node[left,red] at (5,2.5) {$2\ta$};
\end{tikzpicture} 
    \caption{Model on a strip. Different parameters for the inverse-Wishart disorders are labeled aside.}
    \label{fig:model on a strip}
\end{figure}
Let $\strip=\{ (n,m)\in \ZZ^2: m\leq n \leq m+N\}$ denote the strip of width $N$.  Recall that $\tau_l:\ZZ^2\rightarrow\ZZ^2, x\mapsto x+(l,l), l\in\NN_+$ denotes the diagonal translation on $\ZZ^2$, clearly $\strip$ is preserved under such translations. We call the subset consisting of points with either $n=m$ or $n=m+N$ the left boundary and right boundary of the strip, respectively. With slight abuse of notations, the bottom boundary of the strip refers to the sequence of points 
$\hp_{b}=(\mathbf{b}_k)_{ 0\leq k \leq N }\in\lb\strip\rb^{N+1}$, 
where $\mathbf{b}_k=(k,0)$.

\begin{definition}[Inverse-Wishart polymer on a strip]\label{def:homogeneous strip model}
     Let  $N\in\NN $ and let  $\theta, u, v\in \RR$ be parameters  satisfying one of the following two regimes:
\begin{subequations}
\begin{align}
\text{(maximal current regime)} \qquad & u+v,\ 2\theta,\ \theta+u,\ \theta+v>\frac{d-1}{2}; \label{eq:parameters homogeneous general regime}\\
    \text{(equilibrium regime)}\qquad &  u+v=0,\ \theta+u,\ \theta+v>\frac{d-1}{2}.\label{eq:parameters homogeneous equilibrium regime}
\end{align}
\end{subequations}

Consider a family of independent random variables indexed by vertices of the strip, where for $0<m<n<m+N$, 
      $$  W(m,m)\sim \Wisv(\theta+u),\  W(m+N,m)\sim \Wisv(\theta+v),\ W(n,m)\sim \Wisv(2\theta). $$
    
The boundary conditions (or, in the context of model on a strip, initial conditions) refer to a stochastic process $\bB=\lb B_k\rb_{0\leq k\leq N}$ that is independent of the above random variables. The partition functions of inverse-Wishart directed polymer on a strip denoted by $\lb Z^{\ta,u,v}(n,m)\rb_{0\leq m\leq n\leq m+N}$ are defined by the initial conditions $$Z^{\ta,u,v}(\mathbf{b}_{k})=B_k,\quad 0\leq k\leq N;$$ and the recurrence  
    \be\label{eq:recurrence homogeneous strip} Z^{\ta,u,v}(n,m) = W(n,m)\star \begin{dcases*}
Z^{\ta,u,v}(n-1,m)+Z^{\ta,u,v}(n,m-1) & if $0<m<n< m+N$,\\
Z^{\ta,u,v}(n,m-1) & if $0<m=n$,\\
Z^{\ta,u,v}(n-1,m) & if $0<m=n-N$.
\end{dcases*}\ee 
See Figure \ref{fig:model on a strip} for illustration.
We say that a $\sigma$-finite measure $\nu$ on state space $\pd^{N+1}$ is a stationary measure for the model if $\bB\sim\nu$ implies $\lb Z^{\ta,u,v}(n,m)\rb_{m\leq n \leq m+N}\sim\nu$ for all $m\geq 0$. 

\end{definition}
 
\begin{remark}\label{rmk:markovian issue strip}
Again, we emphasize that our definition of a stationary measure differs from that used in the scalar case ($d=1$). Under the polymer recurrence, the increment process   
    $$\left\{ \lb Z^{\ta,u,v}(n,m)\star \big( Z^{\ta,u,v}(m,m)\big)^{-1} \rb_{m+1\leq n\leq m+N}\right\}_{m\geq 0}$$
    forms a well-defined Markov process and admits a stationary probability measure \cite{BarraquandCorwinYang}. However, this property no longer holds when $d \geq 2$ due to the same reason   illustrated in Example \ref{eg:concret example} for the model on the quadrant. The increment process is no longer Markovian, it is therefore only meaningful to consider stationary measures for the full process of partition functions.
\end{remark}

\begin{remark}
Here we define the homogeneous model for simplicity in the sense that the parameters for the disorders $\big( W(n,m)\big)_{0<m<n<m+N}$ are the same, in Section \ref{subsec:Inhomogeneous inverse-Wishart polymer on a strip and two-layer matrix Whittaker process} we will discuss an inhomogeneous extension where the disorder can have different parameters.
\end{remark}

\subsubsection{Stationary measures for the model in the equilibrium regime} 
The following theorem shows that, when the model parameters belong to the equilibrium regime, the inverse-Wishart random walk (Definition \ref{def:IW random walk}) with $\mu$-distributed starting matrix is a stationary measure for the model on a strip in the sense precised in Definition \ref{def:homogeneous strip model}.
\begin{theorem}
\label{thm:homogeneous equilibrium strip}
    Consider the inverse-Wishart polymer on a strip with parameters  
    $\theta, u, v=-u\in \RR$ satisfying the equilibrium regime \eqref{eq:parameters homogeneous equilibrium regime}. Let $\bg=(\ta-u)^N$ . Then, the $\sigma$-finite measure 
    \be\label{eq:stationary measure for the homogneous model on a strip}\d\mu\big( B_0 \big)\times \d\bR^{\bg,\rightarrow}_{B_0} \big(\hat\bB^{(0)}\big)\ee is stationary for the inverse-Wishart polymer on a strip. 
\end{theorem}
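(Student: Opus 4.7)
My plan is to verify stationarity by a direct density computation. Since the process of partition functions is Markovian in the horizontal shift $\tau_1$, it will be enough to show invariance under a single update, i.e., that if $(B_0,\ldots,B_N)\sim\d\mu(B_0)\times\d\bR^{\bg,\rightarrow}_{B_0}$ with $\bg=(\theta-u)^N$, and the row of disorders $(W(n,1))_{n=1}^{N+1}$ is sampled independently with its prescribed laws, then the image row $(Z(1,1),\ldots,Z(N+1,1))$ has the same joint distribution under the identification $B'_k=Z(k+1,1)$. The strategy is to first change variables from the disorders to the partition functions, then integrate out the input $B_k$'s sequentially.

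First I would write the joint density of $(B_0,\ldots,B_N,W(1,1),\ldots,W(N+1,1))$ with respect to $\prod_k\d\mu(B_k)\prod_n\d\mu(W(n,1))$, which is the product of $\P^{-}$ factors from the stationary measure and the disorders. I would then invert the recurrence: $W(1,1)=Z(1,1)\star B_1^{-1}$, $W(n,1)=Z(n,1)\star(Z(n-1,1)+B_n)^{-1}$ for $2\leq n\leq N$, and $W(N+1,1)=Z(N+1,1)\star Z(N,1)^{-1}$. Since $\d\mu$ is invariant under the map $X\mapsto A^{-1/2}XA^{-1/2}$ for every fixed $A\in\pd$, and $W(n,1)$ depends only on $Z(k,1)$ for $k\leq n$, the change of variables from the $W$'s to the $Z$'s is block triangular with unit Jacobian with respect to $\prod_n\d\mu$.

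Next I would integrate out $B_0,B_1,\ldots,B_N$ in this order. Each integration reduces to the basic identity $\int_{\pd}\abs{B}^{\alpha}e^{-\tr[AB]}\,\d\mu(B)=\Gamma_d(\alpha)\abs{A}^{-\alpha}$, valid for $A\in\pd$ and $\alpha>\tfrac{d-1}{2}$, conditions that the equilibrium constraints $\theta\pm u>\tfrac{d-1}{2}$ guarantee throughout. The $B_0$ integration yields the boundary factor $\abs{Z(1,1)}^{\theta-u}$. For each $B_k$ with $1\leq k\leq N-1$, the determinant identity $\abs{A^{-1}+B^{-1}}^{-1}=\abs{A}\abs{B}/\abs{A+B}$ produces a factor $\abs{Z(k-1,1)+B_k}^{-2\theta}$ that exactly cancels the corresponding factor in the $n=k$ bulk term, while the $\abs{Z(k,1)}^{\pm 2\theta}$ contributions telescope across steps. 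The final $B_N$ integration yields the boundary factor $\abs{Z(N+1,1)}^{-(\theta-u)}$.

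After collecting all constants and surviving factors, the marginal density of $(Z(1,1),\ldots,Z(N+1,1))$ with respect to $\prod_n\d\mu(Z(n,1))$ will equal
\[
\Gamma_d(\theta-u)^{-N}\abs{Z(1,1)}^{\theta-u}\abs{Z(N+1,1)}^{-(\theta-u)}\prod_{n=2}^{N+1}e^{-\tr[Z(n,1)^{-1}Z(n-1,1)]},
\]
which is exactly the density of $\d\mu(Z(1,1))\times\d\bR^{\bg,\rightarrow}_{Z(1,1)}$, giving the desired one-step invariance. The main obstacle will be the telescoping bookkeeping: carefully tracking the exponents on $\abs{B_k}$, $\abs{Z(n,1)}$ and $\abs{Z(n-1,1)+B_n}$ through the sequential integrations and confirming that all internal factors cancel. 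A secondary subtlety is that the reference measure is only $\sigma$-finite (because of the $\d\mu(B_0)$ factor), so Fubini for probability measures does not apply directly; Tonelli's theorem suffices since all integrands are nonnegative.
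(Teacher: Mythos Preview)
Your proposal is correct and the telescoping does work out as you anticipate, but it follows a genuinely different route from the paper's proof. The paper deduces Theorem~\ref{thm:homogeneous equilibrium strip} as the homogeneous specialization of Theorem~\ref{thm: inhomogeneous equilibrium strip}, which in turn is proved via a \emph{one-layer Gibbs measure formalism}: the stationary measure is identified with a Gibbs measure on a labeled one-layer graph, and one-step local Markov kernels (Definition~\ref{def: local Markov kernels equilibrium}) update the configuration one vertex at a time. Consistency of these local moves with the Gibbs measure reduces to the one-layer Cauchy and Littlewood identities (Lemmas~\ref{lem:1-layer Cauchy} and~\ref{lem:1-layer Littlewood}), which are each a single application of the integral~\eqref{eq:laplace transform}. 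Your approach instead updates the entire row at once and integrates out $B_0,\ldots,B_N$ sequentially; each $B_k$-integration is again~\eqref{eq:laplace transform}, and the cancellation of $|Z_{k-1}+B_k|^{2\theta}$ against the bulk disorder factor is exactly what the one-layer Cauchy identity encodes from a different angle. So the two arguments rest on the same elementary identity but organize the bookkeeping differently: the paper's local framework is more structural and immediately yields the inhomogeneous version and invariance along arbitrary down-right paths (not just horizontal translates), whereas your direct computation is more self-contained and avoids introducing the graph/Gibbs-measure language, at the cost of being tailored to the homogeneous horizontal case.

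Two small corrections to your sketch, neither fatal: the $B_0$ integration does not produce the factor $|Z(1,1)|^{\theta-u}$ but simply annihilates the first stationary increment (it yields $1$); the factor $|Z_1|^{\theta-u}$ first emerges from the $B_1$ integration. Likewise, the factor $|Z_{N+1}|^{-(\theta-u)}$ is already present in the right-boundary disorder term $\P^-_{\theta-u}(Z_{N+1}\star Z_N^{-1})$, and the $B_N$ integration simply removes the residual. Also note that the bulk integrations require $2\theta>\tfrac{d-1}{2}$, which is implied by (but not identical to) the equilibrium constraints, since summing $\theta\pm u>\tfrac{d-1}{2}$ gives $2\theta>d-1\geq\tfrac{d-1}{2}$.
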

Theorem \ref{thm:homogeneous equilibrium strip} is a special case of Theorem \ref{thm: inhomogeneous equilibrium strip}, and is proven in Section \ref{subsec:Equilibrium regime of the strip model}.  

 \bigskip
When the initial condition is fixed or determined by some probability measure, we propose the following conjecture.
\begin{conjecture}[Law of large numbers]\label{conj:LLN strip}
    For $d\geq 1$ and parameters $\ta, u,v$ in either the maximal current regime \eqref{eq:parameters homogeneous general regime} or the equilibrium regime \eqref{eq:parameters homogeneous equilibrium regime}, 
    consider the model on a strip of arbitrary width $N\geq 1$, with initial condition given by an arbitary probability measure on $\pd^{N+1}$. Then there exists a deterministic constant $f_d^{\ta,u,v}$ such that 
    \be\label{eq: strip conj LLN}
    \frac{\log\abs{Z^{\ta,u,v}(n,n)}}{n}\xrightarrow{n\rightarrow\infty}
f_d^{\ta,u,v}\qquad \text{a.s.} \ee
Moreover, in the equilibrium regime, we have
    \be\label{eq: strip conj LLN value} 
f_d^{\ta,u,-u}=-\psi_d(\ta-u)-\psi_d(\ta+u).  \ee  
\end{conjecture}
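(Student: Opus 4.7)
I would attack the conjecture in stages, starting with the equilibrium regime $u+v=0$ where Theorem \ref{thm:homogeneous equilibrium strip} provides a $\sigma$-finite stationary measure.

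\textbf{Equilibrium regime: mean growth.} Fix a deterministic $B_0 \in \pd$ and sample $(B_k)_{1 \leq k \leq N}$ as an inverse-Wishart random walk from $B_0$ with iid $\Wisv(\ta-u)$ increments -- a proper probability measure. The key step is to prove that, for every row height $m \geq 0$, the first horizontal increment $Z^{\ta,u,-u}(m+1,m) \star Z^{\ta,u,-u}(m,m)^{-1}$ has law $\Wisv(\ta-u)$ \emph{independently of $B_0$}. For $m=0$ this is tautological. For $m \geq 1$, the $\sigma$-finite stationary measure \eqref{eq:stationary measure for the homogneous model on a strip} factors in ratio coordinates as $\d\mu(B_0)$ times a product of iid $\Wisv(\ta-u)$; I would verify by an explicit calculation on one row (exploiting the cancellation of $B_0$ in identities such as $\abs{I + Z^{\ta,u,-u}(1,1)^{-1} B_2} = \abs{I + W(1,1)^{-1} r_2}$, where $r_2 = B_2 \star B_1^{-1}$) that the one-step dynamics preserves this factorization, so the conditional distribution of the ratios given $B_0$ is in fact the same for every $B_0$. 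Decomposing $Z^{\ta,u,-u}(m+1,m+1) = W(m+1,m+1) \star Z^{\ta,u,-u}(m+1,m)$ and using $\EE[\log\abs{\Wisv(\alpha)}] = -\psi_d(\alpha)$, I would obtain
\[\EE\!\left[\log\abs{Z^{\ta,u,-u}(m+1,m+1)} - \log\abs{Z^{\ta,u,-u}(m,m)}\right] = -\psi_d(\ta+u) - \psi_d(\ta-u),\]
uniformly in $m$, yielding the desired mean growth after summation.

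\textbf{Almost sure convergence and arbitrary initial conditions.} I would next establish $\log\abs{Z^{\ta,u,-u}(n,n)} - \EE\bigl[\log\abs{Z^{\ta,u,-u}(n,n)}\bigr] = o(n)$ almost surely via a martingale-difference / Efron-Stein variance bound in the independent inverse-Wishart disorder, combined with Borel-Cantelli. To extend the LLN to an arbitrary probability initial condition on $\pd^{N+1}$, I would couple with the stationary initial condition above using the same disorder and control the free-energy discrepancy through Weyl-type bounds for log-determinants of matrix products.

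\textbf{Maximal current regime.} Here no explicit stationary measure is available. I would aim only at existence of $f_d^{\ta,u,v}$, not its value, via a sandwich argument: using the Loewner-monotonicity of $\star$ in each of its arguments, bound $Z^{\ta,u,v}$ from above and below by equilibrium partition functions with parameters $(u_\pm, -u_\pm)$ approaching $(u,v)$.

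\textbf{Main obstacle.} The hardest step is the concentration bound: unlike the scalar case, $Z$ admits no sum-over-paths representation for $d \geq 2$, so standard Gaussian-type concentration for sums of iid random variables does not apply, and one needs a genuinely matrix-valued sensitivity analysis of $\log\abs{Z}$ with respect to a single disorder entry propagating through the noncommutative recursion. A secondary but delicate issue in the mean computation is propagating the $B_0$-independence of the row increments to all $m \geq 1$: this is the matrix analogue of the scalar Lukacs-type identity recalled in Remark \ref{rmk:matrix Lukacs theorem}, consistent with (and witnessed by) the one-step update of Example \ref{eg:concret example}, but whose general form is not known in the literature.
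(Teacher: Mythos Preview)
This statement is presented in the paper as a \emph{conjecture}, and for $d\geq 2$ it remains open there; the paper only proves the case $d=1$ (Theorem~\ref{thm:LLN strip d=1} in Appendix~\ref{appendix:LLN_d1}). So your proposal is not to be compared against a proof in the paper but against (i) the paper's $d=1$ argument and (ii) the obstacles the paper identifies for $d\geq 2$.

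\textbf{Comparison with the paper's $d=1$ proof.} The paper's argument is structurally different from yours: for $d=1$ the horizontal and vertical increment processes $(\mathbf H_m)_m$, $(\mathbf V_m)_m$ are genuine Markov chains on $\mathbb R_{>0}^N$, and \cite{BarraquandCorwinYang} shows they have a unique ergodic stationary law $\bP^{\ta,u,v}_1$. The paper then applies Birkhoff's ergodic theorem to $\log \mathbf H_m(1)$ and $\log \mathbf V_m(1)$ and sums. This bypasses any concentration estimate entirely, and also handles the maximal current regime and arbitrary initial conditions simultaneously (via uniqueness of the ergodic measure). Your route---compute the mean, then prove $o(n)$ fluctuations by martingale/Efron--Stein bounds, then couple to general initial data---is a legitimate alternative for $d=1$, but it is strictly harder: you would be reproving by hand what ergodicity gives for free.

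\textbf{Gaps for $d\geq 2$.} Your ``key step'' is to show that for every $m$ the law of $Z(m{+}1,m)\star Z(m,m)^{-1}$ is $\Wisv(\ta-u)$, independently of $B_0$. You label this a \emph{secondary} issue, but it is exactly the point the paper isolates as the breakdown: Remark~\ref{rmk:markovian issue strip} states that for $d\geq 2$ the increment process is \emph{not} Markovian, and Remark~\ref{rmk:markovian issue quadrant} and Example~\ref{eg:concret example} show that the one-step update of ratios genuinely depends on the reference matrix $S$. Your determinant identity $|I+Z(1,1)^{-1}B_2|=|I+W(1,1)^{-1}r_2|$ is correct and hints that $\log|\,\cdot\,|$ quantities may be $B_0$-insensitive, but turning this into ``the one-step dynamics preserves the factorization'' (i.e.\ the full ratio law along row $m$ is iid $\Wisv(\ta-u)$ for every fixed $B_0$) is not established anywhere, and your one-row calculation is not carried out. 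Proposition~\ref{prop:finite stationary measure for equilibrium strip}, the closest tool in the paper, only controls down-right paths sharing the starting point $(0,0)$, which for a horizontal bottom is just the bottom row itself.

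Even if the mean step could be salvaged, your acknowledged ``main obstacle''---concentration of $\log|Z(n,n)|$ around its mean---is a genuine open problem here. For $d\geq 2$ there is no sum-over-paths representation, so neither subadditivity nor standard bounded-difference concentration applies, and you have not proposed a replacement mechanism beyond naming Efron--Stein. The maximal-current sandwich via Loewner monotonicity is speculative: changing $(u,v)$ changes the boundary disorder distributions, and there is no obvious monotone coupling between $\Wisv(\ta+u)$ and $\Wisv(\ta+u')$ compatible with the $\star$-recursion.
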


For $d = 1$, the convergence \eqref{eq: strip conj LLN} can be argued by subadditivity of the partition functions, as is commonly done in the literature for models on the quadrant; see, for example, \cite{Comets}. The expectation (i.e. the annealed free energy) can be computed from the integral formulas of two-layer Whittaker process \cite{Barraquand}. In Appendix \ref{appendix:LLN_d1}, we present an alternative argument for the almost sure convergence,  based on Birkhoff's ergodic theorem, which avoids the use of subadditivity and may be of independent interest.

When $d \geq 2$, proving the convergence \eqref{eq: strip conj LLN} becomes non-trivial for the same reason discussed for the model on the quadrant in Section~\ref{subsec:Back to model with delta boundary condition}. Note that compared with the conjecture for the quadrant case (Conjecture \ref{conj: LLN quadrant}), we have an explicit expression for the limit, which can be confirmed through numerical simulations.
%

\subsubsection{Stationary measures for the model in the maximal current regime}
For the model on a strip in the maximal current regime, we need some notation to describe its stationary measure.
For parameter $\ga>\frac{d-1}{2}$ and arguments $\la,\mu\in \pd^2$, let us consider the function 
    \be\label{eq:Baxter Q operator}\Psi_{\ga}(\la/\mu):=\abs{\mu_1(\la_1)^{-1}}^\ga 
    \abs{\mu_2(\la_2)^{-1}}^\ga  e^{-\tr[\mu_1(\la_1)^{-1}+\mu_2(\la_2)^{-1}+\la_2(\mu_1)^{-1}]}.
    \ee
    This is a matrix generalization of `Baxter Q-type operator' introduced in \cite{OConnell}. Recall that $\d\mu$ is a $\sigma$-finite measure on $\pd$. For $\la=(\la_1,\la_2)\in\pd^2$  we  write $\d\mu(\la)=\d\mu(\la_1)\d\mu(\la_2)$.

\begin{definition}
\label{def: homogeneous two layer matrix whittaker process}
Let $N\in\ZZ_{>0}$. Fix a matrix $\la_1^0\in\pd$ and parameters $\ga, u, v\in\RR$ such that 
$ 2\gamma, \ \gamma+u,\  \ga+v>\frac{d-1}{2}.$
The two-layer matrix Whittaker process starting from $\la_1^0$, denoted by $\d\PP^{\ga,u,v}_{\la^0_1}$, is the probability measure on $\pd
\times(\pd^2)^{N}$ with density 
    \be\label{eq:two-layer density strip} 
    \PP^{\ga,u,v}_{\la^0_1}\lb\la_2^0, (\la^i)_{1\leq i \leq N}\rb=
    \frac{1}{\mathcal{Z}^{\ga,u,v}}\abs{\la_2^0(\la_1^0)^{-1}}^u\abs{\la_2^N(\la_1^N)^{-1}}^v\prod_{i=1}^{N}\Psi_{\ga}(\la^i/\la^{i-1}), \ee
with respect to the reference measure  $ \d\mu(\la^0_2) \prod_{i=1}^{N} \d\mu(\la^i)$. The normalization constant $\mathcal{Z}^{\ga,u,v}$ will be proven finite and not depend on the value of $\la^0_1$ in Proposition \ref{prop:Finiteness of Partition function}. 

We denote the marginal density  of $(\la_1^j)_{1\leq j\leq N}$ with respect to $\prod_{i=1}^{N} \d\mu( \la^i_1)$ by $\bP^{\ga,u,v}_{\la^0_1} $. Here and throughout, we use the bold symbol $\d\mathbf P$ to denote the marginal distribution of the first layer under the probability measure $\d\mathbb P$.  
\end{definition}
The two-layer matrix Whittaker process is a matrix-valued generalization of its scalar analogue defined in \cite{BarraquandCorwinYang,Barraquand}. We provide a probabilistic interpretation of this process in Section \ref{subsubsec:Markovian definition}. The following is the stationarity result for the model on a strip in the maximal current regime.

\begin{theorem}\label{thm: homogeneous strip maximal current} 
    Consider the inverse-Wishart polymer on a strip with parameters  $\theta, u, v\in \RR$ satisfying the maximal current regime \eqref{eq:parameters homogeneous general regime}. Then the marginal distribution of  $\bl_1=(\la_1^{i})_{0\leq i\leq N}$ under the two-layer matrix Whittaker process  with $\mu$-distributed starting matrix, i.e. the $\sigma$-finite measure  
    \be\label{eq:stationary measure for homogeneous strip model in the maximal current regime}  \d\mu\big(\la_1^0\big)\times\d\bP^{\ta,u,v}_{\la^0_1}\lb  (\la^j_1)_{1\leq j \leq N}\rb,\ee
     is a stationary measure for the model on a strip.
\end{theorem}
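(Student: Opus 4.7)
The plan is to follow the two-layer Gibbs measure strategy of \cite{BarraquandCorwinYang}, adapted to the matrix setting. The key idea is to lift the polymer recurrence \eqref{eq:recurrence homogeneous strip} to a Markov dynamics on the larger state space $(\pd^2)^{N+1}$ carrying the two-layer matrix Whittaker process, and to show that this lifted dynamics preserves the two-layer measure together with the reference weight $\d\mu(\la^0_1)$. Projecting onto the first layer then yields the theorem.

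First I would construct a Markov kernel $\mathcal{T}$ on $(\pd^2)^{N+1}$ whose action on the first-layer marginal reproduces exactly one step of \eqref{eq:recurrence homogeneous strip}. Given a configuration $(\la^0,\dots,\la^N)$, the kernel $\mathcal{T}$ samples the inverse-Wishart disorders $W(1,1) \sim \Wisv(\ta+u)$, $W(N+1,1) \sim \Wisv(\ta+v)$ and $W(k+1,1) \sim \Wisv(2\ta)$ for $0<k<N$, updates the $\la^i_1$ coordinates sequentially from left to right following \eqref{eq:recurrence homogeneous strip}, and updates the $\la^i_2$ coordinates by an auxiliary local rule designed so that the Gibbs factors $\Psi_{\ta}(\la^{i+1}/\la^i)$ and the boundary weights $|\la^0_2(\la^0_1)^{-1}|^u$, $|\la^N_2(\la^N_1)^{-1}|^v$ transform compatibly. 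Determining the correct second-layer update is the first technical hurdle; one natural guess is a deterministic map involving the sampled $W$'s, to be pinned down by the invariance requirement.

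The main step is to verify that the $\sigma$-finite measure $\d\mu(\la^0_1)\,\PP^{\ta,u,v}_{\la^0_1}$ is $\mathcal{T}$-invariant. This reduces to a family of local matrix integral identities relating the Baxter Q-operator \eqref{eq:Baxter Q operator} with the inverse-Wishart densities $\P^{-}_{2\ta}$, $\P^{-}_{\ta+u}$ and $\P^{-}_{\ta+v}$ coming from the polymer step. The strategy to establish them is to perform the $\d\mu$-preserving substitutions $\la \mapsto \la \star y$ suggested by the polymer update, so that the $\Psi_{\ta}$ factors recombine with the inverse-Wishart weights, and then apply Fubini. The finiteness and $\la^0_1$-independence of the normalization constant $\mathcal{Z}^{\ta,u,v}$ from Proposition~\ref{prop:Finiteness of Partition function} will justify the manipulations of densities and ratios.

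The hardest part is the matrix integral identity itself. As highlighted in Remark~\ref{rmk:matrix Lukacs theorem}, naive matrix versions of Lukacs' theorem are not directly applicable, so the identity must be verified by an explicit change of variables using the particular form of $\Psi_{\ta}$ and the non-commutative $\star$-product. A useful reduction is to treat first the case $N=1$ (a single bulk column with left and right boundary contributions), where the invariance identity involves only two copies of $\Psi_{\ta}$ together with three inverse-Wishart integrations, and then iterate across the strip column by column. If this is too delicate in the homogeneous form, one can instead prove an inhomogeneous version (with column-dependent parameters) by induction on $N$, which localizes the identity to a single column and is more amenable to a direct proof; the homogeneous statement then follows by specialization.
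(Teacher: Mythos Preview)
Your overall strategy is correct and matches the paper's: lift to a two-layer Markov dynamics, show the two-layer Gibbs measure is invariant, then project to the first layer using the fact that the first-layer marginal evolves autonomously according to the polymer recurrence. However, two aspects of your proposal differ from the paper in ways worth noting.

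First, your guess that the second-layer update should be a deterministic map is wrong. In the paper the local two-layer kernel updates both $\pi_1$ and $\pi_2$ \emph{randomly}: the $\pi_1$-marginal is the inverse-Wishart polymer step (as you want), while $\pi_2$ is sampled from a separate density involving $\la_2,\mu_2$ and $\la_1,\mu_1$. The point is that after integrating out $\pi_2$ the resulting $\pi_1$-kernel depends only on $\la_1,\mu_1$; this is what gives the autonomous first-layer dynamics. A deterministic second-layer rule would not produce the Gibbs weight $\Psi_\ta$ on the updated configuration.

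Second, the paper does not attempt a single ``one diagonal time step'' kernel $\mathcal T$ as you propose. Instead it indexes the dynamics by pairs of down-right paths $(\hp,\hq)$ differing at a \emph{single vertex}, so that the invariance identity becomes completely local: one bulk identity (a Cauchy-type identity for $\Psi_\ta$) and two boundary identities (Littlewood-type). These are proved by an explicit, nontrivial change of variables on $\pd$ (Lemma~\ref{lem:change of variable}), which simultaneously swaps $\tr[Vx]\leftrightarrow\tr[Wx^{-1}]$ and is the matrix surrogate for the commutative manipulations you allude to. Your ``treat $N=1$ first and iterate column by column'' reduction is heading in this direction, but the correct localization is to a single vertex flip, not to a full column; once the local identities are in hand the full dynamics is defined by composition and the global invariance is immediate.
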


Theorem \ref{thm: homogeneous strip maximal current} is a special case of Theorem \ref{thm: inhomogeneous maximal current strip}, and is proven in Section \ref{subsec:Proof of inhomogeneous strip max current}. 

\subsubsection{Markovian interpretation}\label{subsubsec:Markovian definition}In this section we give the two-layer matrix Whittaker process $\d\PP^{\ga,u,v}_{\la^0_1}$ \eqref{def: homogeneous two layer matrix whittaker process} a probabilistic interpretation.  

Let us define  the skew matrix Whittaker function through the following ``branching rule'': for parameters $\ga_1,\dots,\ga_k > \frac{d-1}{2}$ and arguments $\la^0=\mu, \la^k=\la\in\pd^{2}$, define
    \be\label{eq:branching rule, skew whittaker function}\Psi_{\ga_1,\dots,\ga_k}(\la/\mu):=\int_{(\pd^2)^{k-1}}\prod_{i=1}^{k}\Psi_{\ga_i}(\la^i/\la^{i-1})\prod_{i=1}^{k-1}\d\mu(\la^i).
    \ee
      
\begin{proposition}\label{prop: homogeneous markovian description}
    Under the probability measure $\d\PP^{\ga,u,v}_{\la^0_1}$, the process $x\mapsto \la^x,\ 0\leq x\leq N$ is an  inhomogeneous Markov chain on $\pd^2$ with initial distribution $\P_0$ and transition kernels $(\P_{x,y})_{0\leq x< y\leq N}$  given respectively by  
    \begin{subequations}\label{eq:markov description}
    \begin{align}
        \P_0(\la)&=\frac{1}{\mathcal{Z}^{\ga,u,v}}\abs{\la_2}^{u}\delta_{\la_1^0}(\la_1)H_{0,N}(\la_2\star \la_1^{-1}), \label{eq:initial distributon}\\
    \P_{x,y}(\la,\mu)&=\frac{H_{N-y}(\mu_2\star\mu_1^{-1})}{H_{N-x}(\la_2\star\la_1^{-1})}
    \Psi_{(\ga)^{y-x}}(\mu/\la),\label{eq:transition kernel of survived random walk}
    \end{align}
\end{subequations} 
where for $k\geq 1$, the function  $H_{k}:\pd\rightarrow\RR_{+}$ is defined by  
$$H_{k}(\mu_2)=\int_{\pd^2} \Psi_{(\ga)^k}
\Big( (\la_1, \la_2)/(\id,\mu_2)\Big) \abs{\la_2(\la_1)^{-1}}^v \d\mu(\la_1)\d\mu(\la_2).$$
\end{proposition}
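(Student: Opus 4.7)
The plan is to verify the Markov property by showing that the joint density in \eqref{eq:two-layer density strip} factorises as $\P_0(\la^0)\prod_{i=1}^{N}\P_{i-1,i}(\la^{i-1},\la^i)$ with the formulas given in \eqref{eq:markov description}. Structurally this is a Doob $h$-transform: the function $\la=(\la_1,\la_2)\mapsto H_{N-x}(\la_2\star\la_1^{-1})$ plays the role of the survival weight at time $x$, turning the weighted path measure into a genuine inhomogeneous Markov chain.

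The core ingredient is a $\star$-scaling invariance of the Baxter-type kernel $\Psi_\ga$: for every $g\in\pd$, the coordinate-wise action $(\la_1,\la_2)\mapsto(\la_1\star g,\la_2\star g)$ leaves $\Psi_\ga(\la/\mu)$ invariant, because each of the matrices $\mu_1\la_1^{-1}$, $\mu_2\la_2^{-1}$, $\la_2\mu_1^{-1}$ appearing in \eqref{eq:Baxter Q operator} is conjugated by $g^{1/2}$ (preserving determinant and trace), and the reference measure $\d\mu$ of \eqref{eq:d mu} is itself $\star$-invariant, since the Jacobian $\abs{g}^{(d+1)/2}$ of the linear map $x\mapsto g\star x$ on symmetric matrices cancels the factor $\abs{g\star x}^{-(d+1)/2}$. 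Iterating through the branching rule \eqref{eq:branching rule, skew whittaker function} transfers this invariance to every $\Psi_{(\ga)^k}$. Taking $g=\la_1^{-1}$ then shows that $\int_{\pd^2}\Psi_{(\ga)^k}(\mu/\la)\,\abs{\mu_2\mu_1^{-1}}^v\,\d\mu(\mu)$ depends on $\la$ only through $\la_2\star\la_1^{-1}$ and equals $H_k(\la_2\star\la_1^{-1})$, which already explains why the two-argument function $H_k$ on the right-hand side of \eqref{eq:transition kernel of survived random walk} makes sense.

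The heart of the proof is then the one-step identity
$$H_{N-x}(\la_2\star\la_1^{-1})=\int_{\pd^2}\Psi_\ga(\mu/\la)\,H_{N-x-1}(\mu_2\star\mu_1^{-1})\,\d\mu(\mu),$$
which I would establish by substituting the definition of $H_{N-x-1}$, rewriting $\Psi_{(\ga)^{N-x-1}}(\alpha/(\id,\mu_2\star\mu_1^{-1}))$ as $\Psi_{(\ga)^{N-x-1}}((\alpha\star\mu_1)/(\mu_1,\mu_2))$ via the invariance, performing the change of variable $\alpha\mapsto\alpha\star\mu_1$, and collapsing the resulting $(N-x)$-fold convolution by the branching rule into $\Psi_{(\ga)^{N-x}}(\cdot/\la)$; one final application of the invariance with $g=\la_1^{-1}$ identifies the outcome as $H_{N-x}(\la_2\star\la_1^{-1})$. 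This identity simultaneously ensures that each $\P_{x,x+1}$ in \eqref{eq:transition kernel of survived random walk} integrates to $1$ and that the family $(\P_{x,y})$ satisfies Chapman--Kolmogorov, so that \eqref{eq:markov description} indeed defines a Markov semigroup.

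The final step is to multiply and divide \eqref{eq:two-layer density strip} by the telescoping factor $\prod_{i=1}^{N} H_{N-i}(\la_2^i\star(\la_1^i)^{-1})/H_{N-i+1}(\la_2^{i-1}\star(\la_1^{i-1})^{-1})$; setting $H_0(x):=\abs{x}^v$ in agreement with the empty-product convention of \eqref{eq:branching rule, skew whittaker function}, the density rearranges exactly into $\P_0(\la^0)\prod_{i=1}^N \P_{i-1,i}(\la^{i-1},\la^i)$. The matching of $\mathcal{Z}^{\ga,u,v}$ with the normalisation of $\P_0$, together with its independence from $\la_1^0$ asserted in Proposition \ref{prop:Finiteness of Partition function}, comes from yet another application of the $\star$-invariance combined with the same type of change of variable. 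The main obstacle is the careful bookkeeping of the non-commutative $\star$-action, which does not distribute over sums and whose scaling properties must be tracked on each side of each $\star$; once organised around the invariance, the rest is Fubini and the branching rule.
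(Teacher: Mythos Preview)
Your approach is correct and close in spirit to the paper's proof, though organized differently. The paper (which actually proves the more general inhomogeneous Proposition~\ref{prop: inhomogeneous markovian description} and specialises) proceeds top-down: it computes the conditional density $\PP[\la^x=\mu^x\mid \la^{x-1},\dots,\la^0]$ directly as a ratio $A/B$ of partially integrated joint densities, observes that the common prefactors cancel, and identifies the remaining integrals with $H_{x,N}$ and $H_{x-1,N}$; the Markov property and the kernel formula fall out simultaneously. You proceed bottom-up: you first check that the proposed kernels are genuinely Markov via the recursion $H_{N-x}=\int\Psi_\ga(\mu/\la)H_{N-x-1}(\mu_2\star\mu_1^{-1})\d\mu(\mu)$, and then telescope to recover the joint density. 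The two arguments are equivalent reorganizations of the same computation; the underlying ingredients (the $\star$-translation invariance of $\Psi_\ga$, the $\GL_d$-invariance of $\d\mu$, and the branching rule \eqref{eq:branching rule, skew whittaker function}) are identical. Your route has the pedagogical advantage of making the Doob $h$-transform structure explicit and of verifying separately that the $\P_{x,y}$ satisfy Chapman--Kolmogorov, while the paper's route is slightly more economical since it does not need to check normalization of the kernels independently. One small bookkeeping point: in the factorization you will find $\lvert\la_2^0(\la_1^0)^{-1}\rvert^u$ rather than $\lvert\la_2^0\rvert^u$ as written in \eqref{eq:initial distributon}; since $\la_1^0$ is fixed by the delta, the discrepancy is a constant absorbed into $\mathcal Z^{\ga,u,v}$ and does not affect the argument.
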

Proposition \ref{prop: homogeneous markovian description} is the special case $\bg=(\ga)^N$, $\bw=(\rightarrow)^N$ of Proposition \ref{prop: inhomogeneous markovian description}, which is proven in Section \ref{subsec:Probabilistic description of two-layer matrix Whittaker process}.
 \begin{remark}
        The factor $\Psi_{\ga_x}(\la^x/\la^{x-1})$  can be interpreted as the sub-Markovian transition kernels for two independent inverse-Wishart random walks $\bl_1, \bl_2$, which are killed at time $x$ with probability $e^{-\tr[\la_2^{x}(\la_1^{x-1})^{-1}]}$. The Doob-transformed kernels ~\eqref{eq:transition kernel of survived random walk}, which are Markovian due to the branching rule~\eqref{eq:branching rule, skew whittaker function}, then correspond to the transition kernels for the walks conditioned to survive up to time~$N$. Similar interpretations are considered in \cite{BrycKuznetsovWangWesołowski,DasdasSerio,Barraquand}.
\end{remark}

\subsection*{Outline of the rest of the paper}
In the rest of the paper, we study inhomogeneous generalizations of the models introduced in the introduction and prove corresponding stationarity results.  
In Section \ref{sec:Model on a strip}, we introduce the inhomogeneous model on a strip which generalize Definition \ref{def:homogeneous strip model} and prove Theorem \ref{thm: inhomogeneous maximal current strip}  which generalize Theorem \ref{thm: homogeneous strip maximal current} for the model with parameters in the maximal current regime.
In Section \ref{subsec:Equilibrium regime of the strip model}, we prove Theorem \ref{thm: inhomogeneous equilibrium strip} which generalize Theorem \ref{thm:homogeneous equilibrium strip} for the model on a strip with parameters in the equilibrium regime. As an application, we prove Theorem \ref{thm:inhomogeneous quadrant} which generalize Theorem \ref{thm:quadrant stationary measure} for the inhomogeneous model on the quadrant  in Section \ref{subsec:Application to inhomogeneous quadrant model}. 

\subsection*{Acknowledgments}
We are grateful to Neil O'Connell for very useful discussions related to the law of large numbers (Conjecture \ref{conj: LLN quadrant}) and the suggestion that the naive guess \eqref{eq:wrong} might not be correct. We thank Pierre Le  Doussal and Alexandre Krajenbrink for discussions related to directed polymers with matrix valued noise and the predictions from \cite{KrajenbrinkLeDoussal}, in particular Theorem \ref{thm:quadrant stationary measure}. We also thank Elia Bisi for useful comments on a preliminary version of this article. G. B and Z.O. acknowledge support from ANR grants ANR-21-CE40-0019 and ANR-23-ERCB-0007.

\section{Maximal current regime of the model on a strip}
\label{sec:Model on a strip} 
\subsection{Preliminaries}\label{subsec: preliminary}
Let us introduce some preliminary notions that we use throughout this work. For background and details, we refer to \cite{Terras,OConnell}.

The space $\pd$ of all $d\times d$ symmetric, strictly positive definite matrices can be viewed as the homogeneous space $\pd\cong \GL(d,\RR)/ O(d, \RR)$, so that $\mathrm{GL}_d$ acts transitively on $\pd$ by $y^txy$  for $x\in \pd$ and $y\in \mathrm{GL}_d$. The $\sigma$-finite measure $\d\mu$ given by \eqref{eq:d mu} is involution-invariant and $\GL_d$-invariant in the sense that: for all $y\in\GL_d$ and all suitable function $f:\pd\rightarrow\RR$,
\begin{subequations}
\begin{align}
\int_{\pd}f( x^{-1} )\d\mu(  x) &= \int_{\pd}f(x)\d\mu(  x), \label{eq:involution invariance of mu}\\
\int_{\pd}f\lb y^{t} x y\rb\d\mu(  x) &= \int_{\pd}f(x)\d\mu(  x). \label{eq:GL_d invariance of mu}
\end{align}
\end{subequations}
The (second type of) matrix K-Bessel function is defined for $\nu\in\C$ \cite{Herz}:
   \be\label{eq:K-Bessel function}
   K_\nu(V,W)=\int_{Y\in\pd}\abs{Y}^\nu e^{-\tr[VY+WY^{-1}]}\d\mu(Y), \quad V,W\in \pd.
   \ee
One can always reduce one of the arguments to the identity matrix. For example, using the change of variable $Y'=W^{-1/2}YW^{-1/2}$ (or using a Cholesky decomposition as done in \cite[Section 2.6]{OConnell}) together with the $\GL_d$-invariance of $\mu$ \eqref{eq:GL_d invariance of mu},  we obtain
$$K_\nu(V,W)=\abs{W}^{\nu} K_\nu(W^{1/2}VW^{1/2},\id).$$
For this reduced K-Bessel function, using the change of variable $Y'=V^{\frac{1}{2}}Y^{-1}V^{\frac{1}{2}}$, the $\GL_d$-invariance of $\mu$ \eqref{eq:GL_d invariance of mu},  and the involution invariance of $\mu$ \eqref{eq:involution invariance of mu}, we have
$$K_{-\nu}(V,\id)=\abs{V}^{\nu}K_\nu(V,\id).$$
Combining the above two identities, we deduce the following symmetry property of the K-Bessel function:
\be\label{eq:symmetry of Bessel function} \abs{V}^\nu K_\nu(V,W) = \abs{W}^{\nu} K_{-\nu}(V,W).\ee
The derivation of this symmetry identity motivates the following auxiliary lemma, which will be used to prove the Cauchy-type and Littlewood-type identities (Lemma \ref{lem:Cauchy identity} and Lemma \ref{lem:Littlewood identity}, respectively) in Section \ref{subsec:Properties of Gibbs measure}.

\begin{lemma}\label{lem:change of variable}
    For any $W, V, Y\in \pd$,  denote $U=W^{\frac{1}{2}}VW^{\frac{1}{2}}$. The matrix $X$ defined by 
    \be X=\lb W^{-\frac{1}{2}}U^{\frac{1}{2}}W^{-\frac{1}{2}}Y W^{-\frac{1}{2}}U^{\frac{1}{2}}W^{-\frac{1}{2}}\rb^{-1}
    \ee 
    is in $\pd$ and such that $\d\mu(X)=\d\mu(Y)$. 
    Moreover, $X$ satisfies the following equations at the same time:
\be
\left\{
\begin{aligned}
    \tr[WX^{-1}]=\tr[VY]\\
    \tr[VX]=\tr[WY^{-1}]
\end{aligned}\right..
\ee
\end{lemma}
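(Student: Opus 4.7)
The plan is to reduce the expression for $X$ to a cleaner form by setting $A := W^{-1/2}U^{1/2}W^{-1/2}$, which is a symmetric matrix in $\pd$ (as a congruence transform of $U^{1/2}\in\pd$ by the invertible matrix $W^{-1/2}$). With this notation, $X^{-1}=AYA$, and since $A$ is symmetric, this is a congruence transform of $Y\in\pd$, hence itself in $\pd$; inverting gives $X=A^{-1}Y^{-1}A^{-1}\in\pd$.

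Next, I would handle the measure invariance by decomposing the map $Y\mapsto X$ as the composition of two $\d\mu$-preserving transformations: first the involution $Y\mapsto Y^{-1}$, which preserves $\d\mu$ by \eqref{eq:involution invariance of mu}, and then the congruence $Z\mapsto A^{-1}ZA^{-1}$, which preserves $\d\mu$ by \eqref{eq:GL_d invariance of mu} applied with $y=A^{-1}\in\GL_d$ (using that $A$ is symmetric, so $A^{-1}=(A^{-1})^t$).

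For the trace identities, I would expand $WX^{-1}=WAYA$ and $VX=VA^{-1}Y^{-1}A^{-1}$, and then exploit the cyclic property of the trace together with the defining relation $U=W^{1/2}VW^{1/2}$, equivalently $W^{-1/2}UW^{-1/2}=V$ and $W^{1/2}VW^{1/2}\cdot U^{-1}=\id$. Concretely, in $\tr[WAYA]$, cyclic rotation moves the outer $W^{1/2}$ factors inward so that the pair $W^{-1/2}UW^{-1/2}$ assembles into $V$, leaving $\tr[VY]$; symmetrically in $\tr[VA^{-1}Y^{-1}A^{-1}]$, cyclic rotation combines $W^{1/2}VW^{1/2}$ with $U^{-1/2}\cdots U^{-1/2}$ to cancel, leaving $\tr[WY^{-1}]$.

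There is no real obstacle: the only mild subtlety is the bookkeeping of fractional powers of $W$ and $U$ in the trace manipulations, which is routine once the symmetric factorization $X^{-1}=AYA$ is in hand. The main conceptual point is recognizing that the seemingly asymmetric definition of $X$ is really a congruence plus inversion, which both explains why $X\in\pd$ with $\d\mu(X)=\d\mu(Y)$ and makes the two trace identities dual to each other.
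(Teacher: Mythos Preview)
Your proposal is correct and follows essentially the same approach as the paper: the paper likewise shows $X\in\pd$ via congruence and inversion, invokes the involution- and $\GL_d$-invariance of $\d\mu$ for the measure identity, and proves the two trace relations by cyclicity $\tr[AB]=\tr[BA]$ with exactly the same key matrix $A=W^{-1/2}U^{1/2}W^{-1/2}$ (and its inverse) that you introduce. Your write-up is a slightly cleaner packaging of the same argument.
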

\begin{proof} 
The operations $\star$ and $x\mapsto x^{-1}$ preserve $\pd$, therefore $$X=\Big( \big((Y\star W^{-1})\star (V\star W)\big) \star W^{-1} \Big)^{-1}\in \pd.$$
The equality $\d\mu(X) = \d\mu(Y)$ follows from the involution invariance \eqref{eq:involution invariance of mu} and the $\GL_d$-invariance \eqref{eq:GL_d invariance of mu} of $\d\mu$. For any $A,B\in M_d$, we have $\tr[AB]=\tr[BA]$. Taking $A=W^{-\frac{1}{2}}U^{\frac{1}{2}}W^{-\frac{1}{2}}, B=W^{\frac{1}{2}}U^{\frac{1}{2}}W^{-\frac{1}{2}}Y$, we obtain:
 $$\tr[WX^{-1}]=\tr[W^{\frac{1}{2}}U^{\frac{1}{2}}W^{-\frac{1}{2}}Y W^{-\frac{1}{2}}U^{\frac{1}{2}}W^{-\frac{1}{2}}]=\tr[W^{-\frac{1}{2}}U W^{-\frac{1}{2}} Y] 
 =\tr[VY].$$   
 Taking $A=VW^{\frac{1}{2}}U^{-\frac{1}{2}}W^{\frac{1}{2}}$ and $B=Y^{-1} W^{\frac{1}{2}}U^{-\frac{1}{2}}W^{\frac{1}{2}}$, we obtain:
$$
    \tr[VX]= 
\tr[VW^{\frac{1}{2}}U^{-\frac{1}{2}}W^{\frac{1}{2}}Y^{-1} W^{\frac{1}{2}}U^{-\frac{1}{2}}W^{\frac{1}{2}}]=\tr[Y^{-1} W^{\frac{1}{2}}U^{-\frac{1}{2}}W^{\frac{1}{2}}VW^{\frac{1}{2}}U^{-\frac{1}{2}}W^{\frac{1}{2}}]
=\tr[Y^{-1} W].
$$
\end{proof}

The Whittaker function of $N$ matrix arguments is introduced in \cite[Section 7.1]{OConnell}. In the special case $N=2$, for $\a, \b \in \C$, the Whittaker function of two matrix arguments is defined by:
\be\label{eq:whittaker function}
    \psi_{(\a, \b)}(X_1,X_2)=\abs{X_1X_2}^{-\b}\int_{\pd}\abs{Y}^{\b-\a}e^{-\tr[X_1^{-1}Y+X_2Y^{-1}]}\d\mu(  Y), \quad X_1, X_2\in\pd.
\ee
Note that it differs from the K-Bessel function \eqref{eq:K-Bessel function} only by a multiplicative determinant factor. Hence, the symmetry property of the K-Bessel function \eqref{eq:symmetry of Bessel function} implies 
the symmetry of Whittaker function of two matrix arguments  in its parameters $(\a, \b)$ by taking $\nu=\b-\a,\ V=X_1^{-1},\ W=X_2$. Precisely, for all $X_1, X_2\in\pd$, we have 
\be\label{eq:symmetry of whittaker function}\psi_{(\a, \b)}(X_1,X_2)=\psi_{(\b, \a)}(X_1,X_2).\ee

Recall that the Wishart distribution with parameter $\theta>\frac{d-1}{2}$ is the probability measure on $\pd$ with density 
$$\P^{+}_{\ta}(x)=\frac{\abs{x}^\theta e^{-\tr[x]}}{\Gamma_d(\theta)}$$ 
with respect to the reference measure $ \d\mu(x)$, and the normalization constant $\Gamma_d(\theta)$ is the d-variate Gamma function given by \eqref{eq:d-variate gamma function}. There is a useful integral identity: for all $S\in\pd$, 
\be\label{eq:laplace transform}\int_{\pd}\abs{x}^{\ta}e^{-\tr[Sx]}\d\mu(  x)=\Gamma_d(\ta)\abs{S}^{-\ta}.
\ee

Let $x,y\in \pd$, recall $x\star y= y^{1/2} x y^{1/2}\in \pd$. Remark that when $d\geq 2$, the operation $\star$ is neither commutative nor associative, and does not distribute over matrix addition. Due to this lack of distributivity, the partition functions no longer writes as a sum over paths \eqref{eq:sum over path}. Let $\nu$ be an $O_d$-invariant probability measure on $\pd$, the (left) random walk on $\pd$ with increments $\nu$ starting from $x_0\in\pd$ is the Markov process $\lb x_n\rb_{n\geq 0}$ defined via the recursion:
$$x_n:=W_n\star x_{n-1}, \quad n\geq 1,$$
where $(W_n)_{n\geq 1}$ are i.i.d. random variables with law $\nu$. 
The transition kernel $P$ of the process is $\GL_d$-invariant: for all $g\in\GL_d$, we have
$$\forall x,y\in \pd,\quad P(x,y)=P(gxg^t,gyg^t).$$
We shall focus on the particular case of $\Wis^{\pm}$ random walk (Definition \ref{def:IW random walk}) and two-sided inverse-Wishart random walk (Definition \ref{def:two sided random walk}) where $\nu$ is Wishart or inverse-Wishart distributed.
The right random walk on $\pd$ defined via $x_n:= x_{n-1}\star W_n $ could also be interesting, however, it is only $O_d$-invariant as remarked in \cite{AristaBisiOConnell}.

\subsection{Inhomogeneous model on a strip and two-layer matrix Whittaker process}\label{subsec:Inhomogeneous inverse-Wishart polymer on a strip and two-layer matrix Whittaker process}
Recall that $\strip$ denotes the strip of width $N$, on which the diagonal translation $\tau_l$ acts. A down-right path on $\strip$ refers to a sequence $\hp=(\p_k)_{0\leq k \leq N}\in\strip^{N+1}$ jointing a vertex $\p_0$ on the left boundary of the strip with a vertex $\p_N$ on the right boundary, such that increments $\p_{k+1}-\p_{k}$ are either $(1,0)=``\rightarrow"$ or $(0,-1)=``\downarrow"$.  The shape of $\hp$ can thus be recorded as a word $\bw(\hp)\in\{\rightarrow,\downarrow\}^{N}$. 

Conversely, for a fixed word $\bw\in\{\rightarrow,\downarrow\}^{N}$, there exists a unique down-right path on the strip with shape $\bw$ starting from the origin $\p_0=(0,0)$, which we denote by $\hp^{\bw}_{\circ}=(\p_k)_{0\leq k\leq N}=\{(n_k,m_k)\}_{0\leq k\leq N}$. A strip with bottom shape $\bw$ refers to the bulk lattice 
\be\label{eq:strip with bottom}\strip^{\bw}=\left\{   (n,m)\in\strip, \forall\  0\leq k\leq N, n> n_k \text{ or } m> m_k     \right\},\ee
as well as its bottom $\hp^{\bw}_{\circ}$. See figure \eqref{subfig:down right path on a strip} for illustration.

\begin{definition}[Inhomogeneous inverse-Wishart polymer on a strip]\label{def:inhomogeneous strip model}
Let $N\in \mathbb{N}$ and $\bw=(w_1,\dots,w_N)\in\{ \rightarrow, \downarrow\}^{N}$ be a word of length $N$. Let $\bta=\lb \theta_1,\dots,\theta_N\rb\in \RR^N, u, v\in\RR $ be parameters satisfying one of the following two regimes: for all $1\leq i,j\leq N$,
     \begin{subequations}
\begin{align}
    \text{(maximal current regime)} \quad & u+v>\frac{d-1}{2}, \theta_i+\theta_j>\frac{d-1}{2},\  \theta_i+u>\frac{d-1}{2}, \ \theta_i+v>\frac{d-1}{2}; \label{eq:parameters inhomogeneous general regime}\\
    \text{(equilibrium regime)}\qquad &  u+v=0,\  \theta_i+u>\frac{d-1}{2}, \ \theta_i+v>\frac{d-1}{2}.
    \label{eq:parameters inhomogeneous equilibrium regime}
\end{align}
\end{subequations}
We adopt the convention that for all $ p\in \ZZ_{\geq 0}, \theta_p :=\theta_k$ for the unique $1\leq k\leq N$ such that $p\equiv k\  \pmod{N}$.  Consider  a family of independent random variables indexed by
the bulk vertices of the strip  $\strip^{\bw}$: for all $m<n<m+N$ with $ (n,m)\in\strip^{\bw}$, 
     \begin{align*}W(n,m)&\sim \Wisv(\theta_n+\theta_m),\\
     W(m,m)\sim \Wisv(\ta_m+u), &\qquad 
       W(m+N,m)\sim \Wisv(\ta_m+v).\end{align*}
     
An initial condition refers to a stochastic process $\bB=\lb B_k\rb_{0\leq k\leq N}$ that is independent of the above random variables.  
Along the bottom boundary $\hp^{\bw}_{\circ}=(\p_k)_{0\leq k\leq N}$, the partition functions of inverse-Wishart directed polymer on a strip is given by initial condition  $$Z^{\bta,u,v}(\p_k)=B_k,\quad 1\leq k\leq N. $$
For bulk points $(n,m)\in\strip^{\bw}$, the partition functions are defined by recurrence: 

\begin{align}\label{eq:recurrence inhomogeneous strip}
Z^{\bta,u,v}(n,m) =\; & W(n,m) \star 
\begin{dcases*}
Z^{\bta,u,v}(n{-}1,m) + Z^{\bta,u,v}(n,m{-}1) & if $m<n<m+N$, \\
Z^{\bta,u,v}(n,m{-}1) & if $m=n$, \\
Z^{\bta,u,v}(n{-}1,m) & if $n=m+N$.
\end{dcases*}
\end{align}

\end{definition}

\begin{definition}[Inhomogeneous two-layer matrix Whittaker process]\label{def: general two-layer matrix Whittaker process}
 Let $N\in \mathbb{N}$ and $\bw=(w_1,\dots,w_N)\in\{ \rightarrow, \downarrow\}^{N}$ be a word of length $N$. Fix a matrix
$\la_1^0\in\pd$ and parameters $\bg=\lb\ga_1,\dots,\ga_N\rb\in \RR^N, u, v\in\RR$  such that for all $ 1\leq i\leq N$,
    $$ u+v, \ga_i+u, \ga_i+v>\frac{d-1}{2}.$$   The (inhomogeneous) two-layer matrix Whittaker process starting from $\la_1^0$, denoted by $\d\PP_{\la^0_1}^{\bg,\bw}$, is the probability measure  on 
    $\pd\times(\pd^2)^{N}$ with density
    \be\label{eq:general two layer matrix whittaker process}
        \begin{split}
            \PP^{\bg,\bw}_{\la^0_1}\lb\la_2^0, (\la^i)_{1\leq i \leq N}\rb
            &=\frac{1}{\mathcal{Z}^{\bg,u,v}} \abs{\la_2^0(\la_1^0)^{-1}}^u\abs{\la_2^N(\la_1^N)^{-1}}^v
            \prod_{\substack{1\leq i\leq N\\ w_i= \rightarrow}}\Psi_{\ga_i}(\la^i/\la^{i-1})
    \prod_{\substack{1\leq i\leq N\\ w_i= \downarrow}}\Psi_{\ga_i}(\la^{i-1}/\la^{i})
        \end{split}
    \ee
with respect to the reference measure $ \d\mu(  \la^0_2) \prod_{i=1}^{N} \d\mu(  \la^i)$. The normalization constant $\mathcal{Z}^{\bg,u,v}$ is finite and does not depend on $\bw$ or the value of $\la_1^0$, as we will prove in Proposition~\ref{prop:Finiteness of Partition function}. Let $\bP^{\bg,\bw}_{\la^0_1}\lb  (\la^j_1)_{1\leq j \leq N}\rb$ denote the density of the marginal $\lb  (\la^j_1)_{1\leq j \leq N}\rb$  with respect to the reference measure $\prod_{j=1}^{N} \d\mu(  \la^j_1)$. In particular, when $\bg=(\ga)^N$ and $\bw=(\rightarrow)^N$, the inhomogeneous two-layer matrix Whittaker process and the corresponding first-layer marginal reduce to the homogeneous ones defined in Definition \ref{def: homogeneous two layer matrix whittaker process}.
    $$\d\PP^{\bg,\bw}_{\la^0_1}=\d\PP^{\ga,u,v}_{\la^0_1}, \quad\d\bP^{\bg,\bw}_{\la^0_1}=\d
    \bP^{\ga,u,v}_{\la^0_1}.$$  We will give the two-layer matrix Whittaker process $\d\PP^{\bg,\bw}_{\la^0_1}$ a probabilistic interpretation in Section \ref{subsec:Probabilistic description of two-layer matrix Whittaker process}. 
\end{definition}
We label the edges of the strip $\strip$ in such a way that for any down-right path $\hp$ on lattice, the parameters $\bg(\hp)=(\gamma_1,\dots,\gamma_N)$ along the path read
    \be\label{eq:labels on strip edges maximal current}\gamma_i= \left\{  \begin{aligned}
\ta_n \quad & \text{if } \p_{i-1}=(n-1,m), \p_{i}= (n,m) \\
\ta_m \quad & \text{if } \p_{i-1}=(n,m), \p_{i}=(n,m-1) 
\end{aligned}
\right., \quad 1\leq i\leq N.\ee
See Figure \eqref{subfig:down right path on a strip} for illustration.

The goal of this section is to show that, the infinite measure given by the first-layer marginal of the two-layer matrix Whittaker process with $\mu$-distributed starting matrix is consistent with the polymer recurrence in the following sense.  

\begin{theorem}\label{thm: inhomogeneous maximal current strip}
 Consider the inverse-Wishart polymer on a strip with parameters $\bta , u, v $   satisfying the maximal current regime \eqref{eq:parameters inhomogeneous general regime}. Suppose the initial condition along the bottom boundary  $\hp^{\bw}_{\circ}$  is  given by
    $$ \d\mu\big(B_0\big)\times \d\bP^{\bg(\hp^{\bw}_{\circ}),\bw(\hp^{\bw}_{\circ})}_{ B_0} \Big(\big(B_i\big)_{1\leq i \leq N}\Big).$$
    Then along any down-right path $\hp=(\p_k)_{1\leq k\leq N}$ above $\hp^{\bw}_{\circ}$, the partition function process is distributed as 
     $$ \d\mu\big( Z^{\bta,u,v}(\p_0)\big)\times \d\bP^{\bg(\hp),\bw(\hp)}_{ Z^{\bta,u,v}(\p_0)} \lb\big(Z^{\bta,u,v}(\p_i)\big)_{1\leq i \leq N}\rb.$$
\end{theorem}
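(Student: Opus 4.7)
The plan is to reduce the theorem to checking invariance under a single elementary move. Any down-right path $\hp$ lying weakly above $\hp^{\bw}_{\circ}$ can be reached from $\hp^{\bw}_{\circ}$ by a finite sequence of elementary moves, each of which replaces an inside corner of the current path at position $(n-1,m-1)$ (traversed as $\downarrow\rightarrow$) with an outside corner at the bulk vertex $(n,m)$ (traversed as $\rightarrow\downarrow$). Such a move consumes exactly one fresh bulk weight $W(n,m)\sim\Wisv(\theta_n+\theta_m)$ through the polymer recurrence
\[ Z^{\bta,u,v}(n,m)=W(n,m)\star\bigl(Z^{\bta,u,v}(n-1,m)+Z^{\bta,u,v}(n,m-1)\bigr). \]
Since only three consecutive vertices are affected (the two side neighbors are unchanged, the old corner is discarded, and the new corner is inserted), induction on the number of elementary moves reduces the claim to verifying invariance of the joint law under one such move. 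Boundary moves near $n=m$ and $n=m+N$ require the analogous argument with the boundary Wishart weights replacing $W(n,m)$.

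Because the first-layer marginal $\bP^{\bg,\bw}_{\la^0_1}$ is defined only through the two-layer Gibbs measure $\PP^{\bg,\bw}_{\la^0_1}$ and has no product form, I would carry the argument out at the two-layer level. Following the strategy of \cite{BarraquandCorwinYang} in the scalar case, I would (i) lift the polymer recurrence to a coupled Markovian update on pairs $(\la_1,\la_2)$ on the three vertices involved in the move, where the second-layer dynamics is designed so that the push-forward of $\PP^{\bg,\bw}_{\la^0_1}$ under the combined update coincides with $\PP^{\bg',\bw'}_{\la^0_1}$, with $\bw'$ the word obtained from $\bw$ by swapping the two letters at the corner and $\bg'$ the correspondingly permuted labels; (ii) project onto the first layer to recover the distribution $\bP^{\bg',\bw'}_{\la^0_1}$ claimed by the theorem.

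The crux is then a local integral identity. Writing $\la^{k-1},\la^{k+1}\in\pd^2$ for the (fixed) two-layer values at the two side vertices and denoting the old and new corner values by $\la^{k}$, one must show that integrating the product $\Psi_{\gamma_k}(\la^k/\la^{k-1})\Psi_{\gamma_{k+1}}(\la^{k+1}/\la^k)$ in the old Gibbs density against the coupled update (whose first-layer component is the inverse-Wishart weight $W(n,m)$) produces the corresponding two-$\Psi$ product with $\gamma_k$ and $\gamma_{k+1}$ interchanged, as dictated by the swapped word $\bw'$. I would establish this by exploiting the change of variables of Lemma \ref{lem:change of variable} (which rearranges the two key trace terms occurring in $\Psi_\gamma$), the involution-invariance \eqref{eq:involution invariance of mu} and $\GL_d$-invariance \eqref{eq:GL_d invariance of mu} of $\d\mu$, the Gaussian-type integration formula \eqref{eq:laplace transform}, and the symmetry \eqref{eq:symmetry of whittaker function} of the two-argument Whittaker function.

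The main obstacle is precisely this local identity in the non-commutative setting. In the scalar case $d=1$, the analogous statement can be checked by a one-variable beta--gamma computation and is essentially Lukacs' theorem; in the matrix case $d\geq 2$, neither of these tools applies (see Remark \ref{rmk:matrix Lukacs theorem}), and one instead has to rely on the $\star$-equivariance built into the definition of the inverse-Wishart measure together with the swap-symmetry of $\Psi_\gamma$ under its two matrix arguments. Once this identity is in hand, the reduction to a single move and the projection from the two-layer to the first-layer marginal are formal, and the theorem follows by induction.
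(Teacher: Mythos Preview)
Your strategy matches the paper's: reduce to elementary corner flips, work at the two-layer level, construct a local kernel preserving the two-layer Gibbs measure, and project to the first layer. The key local identity you point to is the Cauchy-type identity (Lemma \ref{lem:Cauchy identity}), proved precisely via the change of variable in Lemma \ref{lem:change of variable}; the boundary moves are handled by the analogous Littlewood-type identity (Lemma \ref{lem:Littlewood identity}), which you allude to but do not name.

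One structural point is worth sharpening. The paper does not start from the polymer recurrence and then design a compatible second-layer rule; it goes the other way. The two-layer kernel $\Ubw(\pi\mid\la,\mu;\alpha,\beta)$ is \emph{defined} as the Gibbs conditional of the peak vertex given its neighbors (Definition \ref{def:local Markov kernels}), so consistency with the two-layer measure is automatic once the Cauchy identity equates the peak and valley normalizers. The separate, and essential, computation is Lemma \ref{lem: IW First layer operator}: integrate out $\pi_2$ and verify that the $\pi_1$-marginal of this kernel equals $W\star(\la_1+\mu_1)$ with $W\sim\Wisv(\alpha+\beta)$, i.e.\ the polymer update. Your phrasing conflates these two steps into a single identity, and the $\Psi$-product you write down is for a $\rightarrow\rightarrow$ segment rather than a corner. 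Also, there is no ``swap-symmetry of $\Psi_\gamma$ under its two matrix arguments'': $\Psi_\gamma(\la/\mu)$ is not symmetric in $\la,\mu$, and the argument runs entirely on the Cauchy and Littlewood identities together with the marginal computation.
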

Theorem \ref{thm: inhomogeneous maximal current strip} is proven in Section \ref{subsec:Proof of inhomogeneous strip max current}.
The key idea is to define Markovian dynamics for one-step updates via a family of transition kernels indexed by pairs of down-right paths that differ at a single vertex (Definition~\eqref{def:local Markov kernels}), and to construct a corresponding family of consistent measures in the sense specified by~\eqref{eq:compatibility of local operator with wt IW}. These consistent measures are parametrized by two-layer graphs associated with the down-right paths (Definition \ref{def:two layer Gibbs measure}). Precise definitions are provided in Section \ref{subsec:Two-layer graph and two-layer Gibbs measure}. This approach was originally introduced in \cite{BarraquandCorwinYang} in the context of stationary measures for the geometric last passage percolation and the log-gamma polymer on a strip.

\subsection{Two-layer graph and two-layer Gibbs measure}\label{subsec:Two-layer graph and two-layer Gibbs measure}
\begin{definition}[Two-layer graph]\label{def:two layer graph}   

\begin{figure}
\centering
\begin{subfigure}[b]{0.45\textwidth}
\begin{tikzpicture}[scale=0.87]
\draw[gray](-0.5,-0.5)--(4.5,4.5);
\draw[gray] (5.5,-0.5)--(10.5,4.5);
\draw[gray] (0,0)--(6,0);
\draw[gray] (1,1)--(7,1);
\draw[gray] (2,2)--(8,2);
\draw[gray] (3,3)--(9,3);   
\draw[gray] (4,4)--(10,4); 
\draw[gray] (1,-0.5)--(1,1);
\draw[gray] (2,-0.5)--(2,2);
\draw[gray] (3,-0.5)--(3,3);
\draw[gray] (4,-0.5)--(4,4);
\draw[gray] (5,-0.5)--(5,4.5);
\draw[gray] (6,-0.5)--(6,4.5);
\draw[gray] (7,1)--(7,4.5);
\draw[gray] (8,2)--(8,4.5); 
\draw[gray] (9,3)--(9,4.5);
\draw[gray] (10,4)--(10,4.5);
\draw [black] (3,3)--(4.5,4.5);
\draw   (6,0)--(10.5,4.5);
\draw   (5,1)--(7,1);
\draw   (5,2)--(8,2);
\draw   (3,3)--(9,3);   
\draw   (4,4)--(10,4);
\draw   (4,3)--(4,4);
\draw   (5,3)--(5,4.5);
\draw   (6,1)--(6,4.5);
\draw   (7,1)--(7,4.5);
\draw   (8,2)--(8,4.5); 
\draw   (9,3)--(9,4.5);
\draw   (10,4)--(10,4.5);
\node at (0.5,0) {\textcolor{red}{\small $\ta_4$}};
\node at (1.5,0) {\textcolor{red}{\small $\ta_5$}};
\node at (2.5,0){\textcolor{red}{\small $\ta_6$}};
\node at (3.5,0) {\textcolor{red}{\small $\ta_1$}};
\node at (4.5,0) {\textcolor{red}{\small $\ta_2$}};
\node at (5.5,0) {\textcolor{red}{\small $\ta_3$}};

\node at (1.5,1) {\textcolor{red}{\small $\ta_5$}};
\node at (2.5,1) {\textcolor{red}{\small $\ta_6$}};
\node at (3.5,1) {\textcolor{red}{\small $\ta_1$}};
\node at (4.5,1) {\textcolor{red}{\small $\ta_2$}};
\node at (5.5,1) {\textcolor{red}{\small $\ta_3$}};
\node at (6.5,1) {\textcolor{red}{\small $\ta_4$}};

\node at (2.5,2) {\textcolor{red}{\small $\ta_6$}};
\node at (3.5,2) {\textcolor{red}{\small $\ta_1$}};
\node at (4.5,2) {\textcolor{red}{\small $\ta_2$}};
\node at (5.5,2){\textcolor{red}{\small $\ta_3$}};
\node at (6.5,2) {\textcolor{red}{\small $\ta_4$}};
\node at (7.5,2) {\textcolor{red}{\small $\ta_5$}};

\node at (3.5,3) {\textcolor{red}{\small $\ta_1$}};
\node at (4.5,3) {\textcolor{red}{\small $\ta_2$}};
\node at (5.5,3) {\textcolor{red}{\small $\ta_3$}};
\node at (6.5,3) {\textcolor{red}{\small $\ta_4$}};
\node at (7.5,3) {\textcolor{red}{\small $\ta_5$}};
\node at (8.5,3) {\textcolor{red}{\small $\ta_6$}};

\node at (4.5,4) {\textcolor{red}{\small $\ta_2$}};
\node at (5.5,4) {\textcolor{red}{\small $\ta_3$}};
\node at (6.5,4) {\textcolor{red}{\small $\ta_4$}};
\node at (7.5,4) {\textcolor{red}{\small $\ta_5$}};
\node at (8.5,4) {\textcolor{red}{\small $\ta_6$}};
\node at (9.5,4) {\textcolor{red}{\small $\ta_1$}};

\node at (1,0.5){\textcolor{red}{\small $\ta_4$}};
\node at (2,0.5){\textcolor{red}{\small $\ta_4$}};
\node at (3,0.5){\textcolor{red}{\small $\ta_4$}};
\node at (4,0.5) {\textcolor{red}{\small $\ta_4$}};
\node at (5,0.5) {\textcolor{red}{\small $\ta_4$}};
\node at (6,0.5) {\textcolor{red}{\small $\ta_4$}};

\node at (2,1.5) {\textcolor{red}{\small $\ta_5$}};
\node at (3,1.5) {\textcolor{red}{\small $\ta_5$}};
\node at (4,1.5) {\textcolor{red}{\small $\ta_5$}};
\node at (5,1.5) {\textcolor{red}{\small $\ta_5$}};
\node at (6,1.5) {\textcolor{red}{\small $\ta_5$}};
\node at (7,1.5) {\textcolor{red}{\small $\ta_5$}};

\node at (3,2.5){\textcolor{red}{\small $\ta_6$}};
\node at (4,2.5) {\textcolor{red}{\small $\ta_6$}};
\node at (5,2.5) {\textcolor{red}{\small $\ta_6$}};
\node at (6,2.5) {\textcolor{red}{\small $\ta_6$}};
\node at (7,2.5){\textcolor{red}{\small $\ta_6$}};
\node at (8,2.5){\textcolor{red}{\small $\ta_6$}};

\node at (4,3.5){\textcolor{red}{\small $\ta_1$}};
\node at (5,3.5) {\textcolor{red}{\small $\ta_1$}};
\node at (6,3.5) {\textcolor{red}{\small $\ta_1$}};
\node at (7,3.5) {\textcolor{red}{\small $\ta_1$}};
\node at (8,3.5){\textcolor{red}{\small $\ta_1$}};
\node at (9,3.5){\textcolor{red}{\small $\ta_1$}};

\draw[very thick] (3,3)--(5,3)--(5,1)--(6,1)--(6,0);
\fill (3,3) circle(0.08);
\node[left] at (3,3) {\small $(0,0)=\p_0$};
\fill (4,3) circle(0.08);
\fill (5,3) circle(0.08);
\fill (5,2) circle(0.08);
\fill (5,1) circle(0.08);
\fill (6,1) circle(0.08);
\fill (6,0) circle(0.08);
\node[below] at (6,0) {\small $\p_6$};
\end{tikzpicture}
         \caption{}
         \label{subfig:down right path on a strip}
 \end{subfigure} 
\hfill
\begin{subfigure}{0.45\textwidth}
\begin{tikzpicture}[scale=0.78]
			\draw[thick] (0,0) -- (2,2) -- (4,0) -- (5,1) -- (6,0);
			\draw[thick] (0,2) -- (2,4) -- (4,2) -- (5,3) -- (6,2);
			\draw[thick] (0,0) arc(270:90:1);
			\draw[thick] (6,2) arc(90:-90:1);
			\fill (0,0) circle(0.1);\node[below] at (0,0) {\small $\la_2^{0}$};
			\fill (1,1) circle(0.1);\node[below] at (1,1) {\small $\la_2^{1}$};
			\fill (2,2) circle(0.1);\node[below] at (2,2) {\small $\la_2^{2}$};
			\fill (3,1) circle(0.1);\node[below] at (3,1) {\small $\la_2^{3}$};
			\fill (4,0) circle(0.1);\node[below] at (4,0) {\small $\la_2^{4}$};
			\fill (5,1) circle(0.1);\node[below] at (5,1) {\small $\la_2^{5}$};
			\fill (6,0) circle(0.1);\node[below] at (6,0) {\small $\la_2^{6}$}; 
\fill (0,2) circle(0.1);\node[above] at (0,2) {\small $\la_1^{0}$};
\fill (1,3) circle(0.1);\node[above] at (1,3) {\small $\la_1^{1}$};
\fill (2,4) circle(0.1);\node[above] at (2,4) {\small $\la_1^{2}$};
\fill (3,3) circle(0.1);\node[above] at (3,3) {\small $\la_1^{3}$};
\fill (4,2) circle(0.1);\node[above] at (4,2) {\small $\la_1^{4}$};
\fill (5,3) circle(0.1);\node[above] at (5,3) {\small $\la_1^{5}$};
\fill (6,2) circle(0.1);\node[above] at (6,2) {\small $\la_1^{6}$}; 
\draw[dashed] (0,2) -- (1,1);
\draw[dashed] (1,3) -- (2,2);
\draw[dashed] (2,2) -- (3,3);
\draw[dashed] (3,1) -- (4,2);
\draw[dashed] (4,2) -- (5,1);
\draw[dashed] (5,1) -- (6,2); 
\node[left] at (-1,1) {\textcolor{red}{\small $u$}};
\node[right] at (7,1) {\textcolor{red}{\small $v$}};
\node[above] at (0.5,2.5) {\textcolor{red}{\small $\ta_1$}};
\node[above] at (1.5,3.5) {\textcolor{red}{\small $\ta_2$}};
\node[above] at (2.5,3.5) {\textcolor{red}{\small $\ta_6$}};
\node[above] at (3.5,2.5) {\textcolor{red}{\small $\ta_5$}};
\node[above] at (4.5,2.5) {\textcolor{red}{\small $\ta_3$}};
\node[above] at (5.5,2.5) {\textcolor{red}{\small $\ta_4$}};
\node[below] at (0.5,0.5) {\textcolor{red}{\small $\ta_1$}};
\node[below] at (1.5,1.5) {\textcolor{red}{\small $\ta_2$}};
\node[below] at (2.5,1.5) {\textcolor{red}{\small $\ta_6$}};
\node[below] at (3.5,0.5) {\textcolor{red}{\small $\ta_5$}};
\node[below] at (4.5,0.5) {\textcolor{red}{\small $\ta_3$}};
\node[below] at (5.5,0.5) {\textcolor{red}{\small $\ta_4$}};
\end{tikzpicture} 
  \caption{}
         \label{subfig:two layer Gibbs graph}
     \end{subfigure}
    \caption{(a): Strip $\mathbb{S}_{6}^{\bw}$ with the bottom path $\hp^{\bw}_{\circ}=(\p_0,\dots,\p_6)$ highlighted. We only consider the partition functions indexed by vertices above. Edge labels are assigned according to rule~\eqref{eq:labels on strip edges maximal current} (b) The two-layer graph $\gp^{\bw}_{\circ}$ associated with $\hp^{\bw}_{\circ}$ (up to a counterclockwise rotation by $\frac{\pi}{4}$), on which a two-layer configuration $\bl$ is specified.}
    \label{fig:down right path on a strip and its two layer graph}
\end{figure}
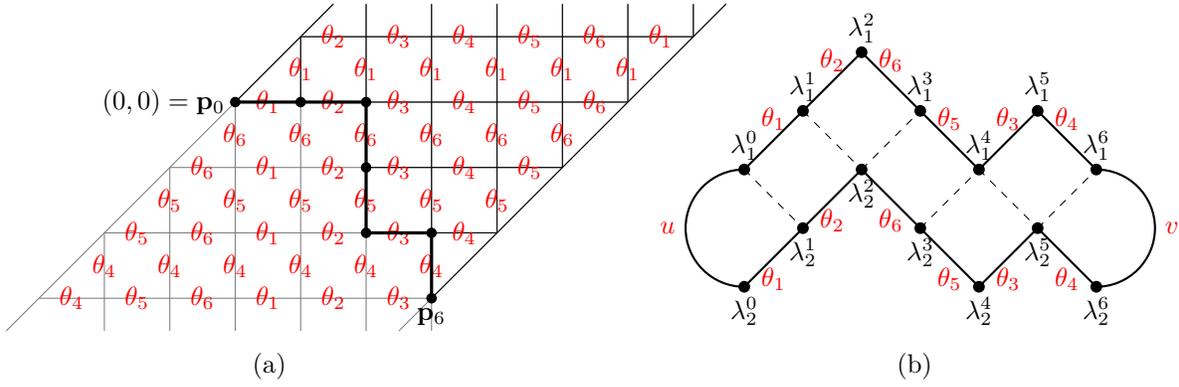
For a given down-right path $\hp$, we associated it with a two-layer graph $\gp$ composed of two parallel copies of   $\hp$, called the first and second layers, connected by dotted edges and enclosed by boundary arcs. See Figure~\ref{subfig:two layer Gibbs graph} for an example corresponding to the highlighted path in Figure~\ref{subfig:down right path on a strip}.

Formally, we define the associated two-layer graph $\gp = (V, E)$ as an undirected graph, where each edge $e \in E$ is assigned with some real number $\bg(e)$, referred to as its edge label. They are defined precisely as follows:
\begin{itemize}
    \item \textbf{Vertex set:}
    \begin{equation*}
        V = \{ \p_i, \p_i' : 0 \leq i \leq N \}, \quad \text{where } \p'_i:= \tau_{(-1,-1)}\p_i.
    \end{equation*}
    \item \textbf{Edge set:}
    \begin{equation*}
        \begin{aligned}
    E = & \;\{\{\p_{i-1}, \p_{i}\}, \{\p_{i-1}', \p_{i}'\} : 1 \leq i \leq N\} \cup \{\{\p_0, \p_0'\}, \{\p_N, \p_N'\}\},\\
    & \cup \{\{\p_i, \p_j'\} : 0 \leq i,j \leq N, |\p_i - \p_j'| = 1\} 
    \end{aligned}
    \end{equation*}
    where $|\cdot|$ refers to the Euclidean distance in the lattice.
    \item \textbf{Edge labels:}  
    \begin{itemize}
        \item $\bg(\{\p_i, \p_{i+1}\}) = \bg(\{\p_i', \p_{i+1}'\}) = \gamma_i$,  where $\gamma_i$ is defined by the labeling rule in~\eqref{eq:labels on strip edges maximal current} for $0\leq i\leq N-1$;
        \item $\bg(\{\p_i, \p_j'\}) = 0$ for $0\leq i,j\leq N$ such that $|\p_i - \p_j'| = 1$;
        \item $\bg(\{\p_0, \p_0'\}) = u$, \quad $\bg(\{\p_N, \p_N'\}) = v$.
    \end{itemize}
\end{itemize}
We use solid, dotted, and arc-shaped edges to represent edges with the three types of labels in the two-layer graph, see Figure~\eqref{subfig:two layer Gibbs graph} for an example.
We emphasize that an edge of label zero does not imply the absence of the corresponding edge.  
\end{definition}

\begin{definition}[Local Boltzmann weight and two-layer Gibbs measure]\label{def:two layer Gibbs measure}

A two-layer configuration $\bl=\begin{pmatrix}
    \la_1^j , \la_2^j
\end{pmatrix}_{0\leq j\leq N}$ refers to an assignment $\la_i^j\in\pd$ to each vertex of $\gp$. See Figure \eqref{subfig:two layer Gibbs graph}  for illustration. For a two-layer configuration $\bl$ on a two-layer graph $\gp$, we define the local Boltzmann weight of lines (solid, dotted or arc) appearing in the graph respectively as: 
\begin{subequations}
\label{eq:Boltzmann weights IW}
    \be\label{eq:arc weight IW}
\wt\lb\raisebox{-4.5pt}{\arcleft}\rb=\abs{yx^{-1}}^u,\quad \wt\lb\raisebox{-4.5pt}{\arcright}\rb=\abs{yx^{-1}}^v,\ee
\be  \label{eq:solid weight IW}
\wt\lb\bulkright\rb=\wt\lb\bulkleft\rb=\abs{xy^{-1}}^{-\ga} e^{-\tr[yx^{-1}]},\ee
\be  \label{eq:dotted weight IW}
\wt\lb\bulkrightdotted\rb=\wt\lb\bulkleftdotted\rb=e^{-\tr[yx^{-1}]}.\ee
\end{subequations}
The Boltzmann weight in \eqref{eq:solid weight IW} coincides with the densities of the inverse-Wishart distribution with parameter $\gamma$, evaluated at $x \star y^{-1}$ or $y \star x^{-1}$. In the scalar case $d=1$, up to a Cole-Hopf transformation $x'=e^x$, the Boltzmann weights in \eqref{eq:Boltzmann weights IW} reduce to the log-gamma weights introduced in \cite{BarraquandCorwinYang}.

Let $\mathcal{G}\subset \gp$ be a subgraph, we denote the number of its vertices by $\abs{\mathcal{G}}$. Let us define a function $\wt^{\mathcal{G}}:\bl\in\pd^{\abs{\mathcal{G}}}\mapsto \wt^{\mathcal{G}}(\bl)\in\RR_{>0}$  as the products of Boltzmann weight of all lines appearing in $\mathcal{G}$. For example, the  function $\Psi_{\ga}(\la/\mu)$ defined in \eqref{eq:Baxter Q operator} can be alternatively  expressed as:

$$\Psi_{\ga}(\la/\mu)=\wt\lb\vcenter{\hbox{
\raisebox{-35pt}{\begin{tikzpicture}
       \draw[thick] (-0.6,0.6)--(0,0);
       \draw[thick] (-0.6,-0.6)--(0,-1.2);
       \draw[dashed] (-0.6,-0.6)--(0,0);
        \node[ left] at (-0.5,0.55) {\small   $\la_1$};
        \node[ left] at (-0.5,-0.65) {\small  $\la_2$};
        \node[above] at (-0.3,0.3) {\textcolor{red}{\small  $\ga$}};
        \node[above] at (-0.3,-0.9) {\textcolor{red}{\small  $\ga$}};
        \node[ right] at (0,0) {\small  $\mu_1$};
        \node[ right] at (0,-1.2) {\small  $\mu_2$};
\end{tikzpicture}}
}}\rb=\wt\lb
\vcenter{\hbox{\raisebox{-35pt}{\begin{tikzpicture}
       \draw[thick] (0,0)--(0.6,0.6);
       \draw[thick](0,-1.2)--(0.6,-0.6);
       \draw[dashed] (0,0)--(0.6,-0.6);
        \node[ right] at (0.5,0.55) {\small $\la_1$};
        \node[ right] at (0.5,-0.65) {\small  $\la_2$};       
        \node[above] at (0.3,0.25) {\textcolor{red}{\small   $\ga$}};
        \node[above] at (0.3,-0.95) {\textcolor{red}{\small  $\ga$}};
        \node[left] at (0,0) {\small  $\mu_1$};
        \node[left] at (0,-1.2) {\small  $\mu_2$};
\end{tikzpicture}}
}}\rb.
$$
The Gibbs measure $\wt^{\mathcal{G}}(\d\bl)$ associated with a subgraph $\mathcal{G}\subset\gp$ is the $\sigma$-finite measure on $\pd^{\abs{\mathcal{G}}}$ with  density $\wt^{\mathcal{G}}(\bl)$ with respect to the reference measure $\d\mu(\bl)$. 

Note that when $\mathcal{G}=\gp$, up to the normalization constant $\mathcal{Z}^{\bta,u,v}$ that we will define in \eqref{eq: partition function}, the two-layer Gibbs measure $\wt^{\gp}(\d\bl)$ coincides with the two-layer matrix Whittaker process with $\mu$-distributed starting matrix:
\be\label{eq:maximal current stationary measure is wt gp}\frac{1}{\mathcal{Z}^{\bta,u,v}}\wt^{\gp}(\bl)\d\mu(\bl)=\d\mu(\la^0_1)\times \d\PP^{\bg(\hp),\bw(\hp)}_{\la^0_1}\lb\la_2^0, (\la^i)_{1\leq i \leq N}\rb.\ee 
Recall that an equality between two $\sigma$-finite Radon measures means that they have same finite value on every compact Borel set.
\end{definition}

\subsection{Properties of Gibbs measure}
\label{subsec:Properties of Gibbs measure}
In this section, we prove some properties of the specially chosen Boltzmann weights \eqref{eq:Boltzmann weights IW}, which turn out to be useful in defining the dynamics.

\begin{lemma}[Translation invariance]
    Let $\hp$ be a down-right path, let $\mathcal{G}$ be a subgraph of $ \gp$. The associated Gibbs measure  $\wt^{\mathcal{G}}(\d\bl)$  is (right-$\star$) translation invariant: for all $x\in\pd, \bl\in \pd^{\abs{\mathcal{G}}}$, 
\be\label{eq:translation invariance}\wt^{\mathcal{G}}(\bl)=\wt^{\mathcal{G}}(\bl\star x),\ee
where we write $\bl\star x= ( \la^v\star x)_{v\in\mathcal{G}}$.

In particular, when $\mathcal{G}=\gp$, the two-Gibbs measure is translation invariant: for all $x\in\pd,  \ \bl\in \pd^{2N+2}$, we have 
$$\wt^{\gp}(\bl)=\wt^{\gp}(\bl\star x),$$
where $\bl\star x= ( \la_1^j\star x , \la_2^j\star x)_{0\leq j\leq N}$. 
\end{lemma}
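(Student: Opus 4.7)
The plan is to reduce the claim to a local statement for a single edge of each of the three types (solid, dotted, arc), using the fact that $\wt^{\mathcal{G}}(\bl)$ is by definition a product of local Boltzmann weights, one per edge of $\mathcal{G}$. Since the product structure is preserved under simultaneous right-$\star$ translation of every vertex label by the same matrix $x \in \pd$, invariance of each factor implies invariance of the full product.

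The key computation is the following. For any $x,y,z\in\pd$, writing out $\star$ explicitly,
\begin{equation*}
(y\star z)\,(x\star z)^{-1} \;=\; z^{1/2}\, y\, z^{1/2}\cdot z^{-1/2}\, x^{-1}\, z^{-1/2} \;=\; z^{1/2}\,(y x^{-1})\, z^{-1/2}.
\end{equation*}
Hence, by multiplicativity of the determinant and cyclicity of the trace,
\begin{equation*}
\bigl|(y\star z)(x\star z)^{-1}\bigr| \;=\; |y x^{-1}|, \qquad \tr\bigl[(y\star z)(x\star z)^{-1}\bigr] \;=\; \tr[y x^{-1}].
\end{equation*}
Every local Boltzmann weight in \eqref{eq:Boltzmann weights IW} is built only from the two quantities $|y x^{-1}|$ and $\tr[y x^{-1}]$ associated with the endpoints $x,y$ of the edge (possibly raised to powers $u$, $v$ or $-\gamma$, or inside an exponential). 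Therefore each of the arc weight, the solid weight, and the dotted weight is unchanged when both of its endpoints are simultaneously right-$\star$-translated by $x$.

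Carrying out the steps in order: first verify the above local identity; second, apply it edge-by-edge to the three explicit weight formulas in \eqref{eq:arc weight IW}, \eqref{eq:solid weight IW}, \eqref{eq:dotted weight IW}; finally, take the product over all edges of an arbitrary subgraph $\mathcal{G}\subset \gp$ to conclude \eqref{eq:translation invariance}. The case $\mathcal{G}=\gp$ follows immediately, since $(\la_1^j\star x,\la_2^j\star x)_{0\leq j\leq N}$ is exactly the simultaneous right-$\star$ translation at every vertex of $\gp$. There is no real obstacle here: the argument is just bookkeeping once the single-edge invariance is observed, which is why this lemma is presented as a preliminary property of the Boltzmann weights.
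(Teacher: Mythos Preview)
Your proof is correct and follows essentially the same approach as the paper: both reduce to the observation that $(y\star z)(x\star z)^{-1}$ is conjugate to $yx^{-1}$, so the determinant and trace of such expressions are unchanged under simultaneous right-$\star$ translation, and then conclude by the product structure of $\wt^{\mathcal{G}}$. Your version is slightly more explicit in writing out the conjugation $z^{1/2}(yx^{-1})z^{-1/2}$, but the content is identical.
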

\begin{proof}
    Because for all $y,z\in\pd$, we have $$\forall x\in\pd, \quad \abs{(y\star x)(z\star x)^{-1}}=\abs{yz^{-1}},\quad \tr[(y\star x)(z\star x)^{-1}]=\tr[yx^{-1}].$$
    The local Boltzmann weight of a single line is preserved under congruence. As products of local Boltzmann weights, the Gibbs measure is also preserved. 
\end{proof}

\begin{lemma}[Symmetry of skew matrix Whittaker function]\label{lem:symmetry of skew whittaker function} Recall the skew matrix Whittaker function defined in \eqref{eq:branching rule, skew whittaker function}. 
For all parameters $\a,\b\in\RR$ and arguments $\la,\mu\in\pd^2$, we have 
$$\Psi_{\a, \b}(\la/\mu)=\Psi_{\b,\a}(\la/\mu) ,$$
with both sides finite. Equivalently,
$$\int_{\pd^2}\wt\lb
\vcenter{\hbox{\raisebox{-35pt}{\begin{tikzpicture}
    \draw[thick] (-0.6,0.6)--(0,1.2)--(0.6,1.8);
    \draw[dashed] (-0.6,0.6)--(0,0);
    \draw[dashed] (0,1.2)--(0.6,0.6);
    \draw[thick] (-0.6,-0.6)--(0,0)--(0.6,0.6);
    \node[below right] at (0.5,0.65) {\small $\la_2$};
    \node[above right] at (0.5,1.75) {\small   $\la_1$};
    \node[above left] at (-0.45,0.66) {\small  $\mu_1$};
    \node[below left] at (-0.45,-0.55) {\small  $\mu_2$};
    \node[below] at (0.1,0) {\small  $\k_2$};
    \node[above] at (-0.1,1.2) {\small  $\k_1$};
    \node[above] at (0.3,0.25){\textcolor{red}{\small  $\b$}};
    \node[above] at (0.3,1.4){\textcolor{red}{\small  $\b$}};
    \node[above] at (-0.35,-0.37){\textcolor{red}{\small  $\a$}};
    \node[above] at (-0.35,0.83){\textcolor{red}{\small  $\a$}};
\end{tikzpicture}}
}}\rb\d\mu(\k)=
        \int_{\pd^2}\wt\lb\vcenter{\hbox{
        \raisebox{-35pt}{\begin{tikzpicture}
    \draw[thick] (-0.6,0.6)--(0,1.2)--(0.6,1.8);
    \draw[dashed] (-0.6,0.6)--(0,0);
    \draw[dashed] (0,1.2)--(0.6,0.6);
    \draw[thick] (-0.6,-0.6)--(0,0)--(0.6,0.6);
    \node[below right] at (0.5,0.65) {\small $\la_2$};
    \node[above right] at (0.5,1.75) {\small   $\la_1$};
    \node[above left] at (-0.45,0.66) {\small  $\mu_1$};
    \node[below left] at (-0.45,-0.55) {\small  $\mu_2$};
    \node[below] at (0.1,0) {\small  $\k_2$};
    \node[above] at (-0.1,1.2) {\small  $\k_1$};
    \node[above] at (0.3,0.28){\textcolor{red}{\small  $\a$}};
    \node[above] at (0.3,1.5){\textcolor{red}{\small  $\a$}};
    \node[above] at (-0.35,-0.4){\textcolor{red}{\small  $\b$}};
    \node[above] at (-0.35,0.8){\textcolor{red}{\small  $\b$}};
\end{tikzpicture}}
}}\rb\d\mu(  \k).$$
    More generally, for any  parameters $\ga_1,\dots, \ga_N\in\RR$ and arguments $\la, \mu\in\pd^2$, let  $\sigma$ be a permutation 
 on $\{1, \dots, n\}$,  we have 
    $$\Psi_{\ga_1,\dots,\ga_N}(\la/\mu)=\Psi_{\ga_{\sigma(1)},\dots,\ga_{\sigma(N)}}(\la/\mu).$$
\end{lemma}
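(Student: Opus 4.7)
The plan is to first reduce to the case $N=2$, then handle that case by factoring the integral into two K-Bessel functions and invoking the symmetry~\eqref{eq:symmetry of Bessel function}. Every permutation of $\{1,\dots,N\}$ is a product of adjacent transpositions, so it will suffice to show invariance of $\Psi_{\ga_1,\dots,\ga_N}(\la/\mu)$ under each swap $\ga_i\leftrightarrow\ga_{i+1}$. Expanding via the branching rule~\eqref{eq:branching rule, skew whittaker function} and using Fubini to integrate first over $\la^i$ with $\la^{i-1},\la^{i+1}$ held fixed, the inner integral becomes exactly $\Psi_{\ga_i,\ga_{i+1}}(\la^{i+1}/\la^{i-1})$. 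The general statement then reduces to the base case $N=2$.

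For the case $N=2$, the key observation will be that when we write out $\Psi_{\a,\b}(\la/\mu)=\int\Psi_\a(\k/\mu)\,\Psi_\b(\la/\k)\,\d\mu(\k)$ using~\eqref{eq:Baxter Q operator}, the exponent splits as a sum of a $\k_1$-part $\tr[(\mu_1+\la_2)\k_1^{-1}+\la_1^{-1}\k_1]$ and a $\k_2$-part $\tr[\mu_2\k_2^{-1}+(\la_2^{-1}+\mu_1^{-1})\k_2]$, while the $\k$-dependent determinant contribution is just $\abs{\k_1}^{\b-\a}\abs{\k_2}^{\b-\a}$. The double integral will therefore factor into a product of two K-Bessel functions in the sense of~\eqref{eq:K-Bessel function}, both with shared index $\b-\a$ and $\pd$-valued arguments, hence both manifestly finite. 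This automatically yields the finiteness claim of the lemma.

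Applying the Bessel symmetry~\eqref{eq:symmetry of Bessel function} to each of the two K-Bessel factors turns the index $\b-\a$ into $\a-\b$, at the cost of determinant prefactors. Collecting them, the only nontrivial point will be that the two $\abs{\mu_1+\la_2}$ powers produced by the two factors cancel, thanks to the identity $\abs{\la_2^{-1}+\mu_1^{-1}}=\abs{\mu_1+\la_2}/(\abs{\mu_1}\abs{\la_2})$; the surviving $\abs{\mu_i}$ and $\abs{\la_i}$ powers will combine with the prefactors $\abs{\mu_i}^\a\abs{\la_i}^{-\b}$ sitting outside the integrals to produce $\abs{\mu_i}^\b\abs{\la_i}^{-\a}$ for $i=1,2$. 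The resulting expression matches the analogous factorization for $\Psi_{\b,\a}(\la/\mu)$, completing the proof. The main (and really the only) delicate bookkeeping will be this last cancellation, which is ultimately a manifestation of Lemma~\ref{lem:change of variable}.
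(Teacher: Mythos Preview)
Your proposal is correct and follows essentially the same approach as the paper. Both arguments factor $\Psi_{\a,\b}(\la/\mu)$ into a product of two Bessel-type integrals over $\k_1$ and $\k_2$ and invoke the same underlying symmetry; the paper packages the two factors as the Whittaker function $\psi_{(\a,\b)}$ of two matrix arguments and uses~\eqref{eq:symmetry of whittaker function} (which is itself derived from~\eqref{eq:symmetry of Bessel function}), so that the determinant prefactor simplifies upfront to the manifestly symmetric $\abs{\mu_1\mu_2}^{\a+\b}$, whereas you apply~\eqref{eq:symmetry of Bessel function} directly and do the equivalent determinant bookkeeping afterwards. The reduction of the general case to $N=2$ via adjacent transpositions is identical in both.
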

\begin{proof} Recall the definition of Whittaker function of matrix arguments \eqref{eq:whittaker function}, a direct computation shows that
\begin{align*}
\Psi_{\a, \b}(\la/\mu)
&= \int_{\pd^2} \Psi_{\a}(\k/\mu) \Psi_{\b}(\la/\k) \d\mu(  \k) \\
&= \abs{\la_1^{-1}}^{\b} \abs{\mu_1}^{\a} 
    \int_{\pd} \abs{\k_1}^{\b - \a} 
    e^{-\tr[\k_1 \la_1^{-1} + \k_1^{-1}(\mu_1 + \la_2)]} \d\mu(  \k_1) \\
&\quad \times
    \abs{\la_2^{-1}}^{\b} \abs{\mu_2}^{\a}
    \int_{\pd} \abs{\k_2}^{\b - \a}
    e^{-\tr[\k_2(\la_2^{-1} + \mu_1^{-1}) + \k_2^{-1} \mu_2]} \d\mu(  \k_2) \\
&= \abs{\la_1^{-1}}^{\b} \abs{\mu_1}^{\a} 
    \abs{\la_2^{-1}}^{\b} \abs{\mu_2}^{\a}
    \abs{\la_1(\mu_1 + \la_2)}^{\b}
    \abs{\mu_2(\la_2^{-1} + \mu_1^{-1})^{-1}}^{\b} \\
&\quad \times \psi_{(\a,\b)}\left(\la_1^{-1}, \mu_1 + \la_2\right)
    \psi_{(\a,\b)}\left(\la_2^{-1} + \mu_1^{-1}, \mu_2\right).
\end{align*}

The determinant factors simplify to $\abs{\mu_1\mu_2}^{\a+\b}$, which is invariant under $(\a,\b)\mapsto(\b,\a)$. Recall that the Whittaker function of matrix arguments $\psi_{(\a,\b)}$ is well-defined for all $\a,\b\in\RR$ and is symmetric in its parameters (see \eqref{eq:symmetry of whittaker function}),  hence $\Psi_{\a, \b}(\la/\mu)=\Psi_{\b, \a}(\la/\mu)$ and is finite as desired.

For general skew Whittaker functions, note that any permutation on $\{1,\dots, N\}$ can be generated by a finite sequence of elementary transpositions $\{(i,i+1) :  1\leq i\leq N\}$. By repeatedly applying the first part of the lemma, it follows that $\Psi_{\ga_1,\dots,\ga_N}$ is symmetric in its arguments.
\end{proof}

\begin{lemma}[Cauchy type identity]\label{lem:Cauchy identity}
For parameters $\alpha,\beta\in \RR$ with $\alpha+\beta>\frac{d-1}{2}$ and arguments $\la,\mu\in\pd^2$, we have  
\be\label{eq:Cauchy identity}\int_{\pd^2}\Psi_{\b}(\pi/\la)\Psi_{\a}(\pi/\mu)\d\mu(  \pi)=\int_{\pd^2}\Psi_{\a}(\la/\k)\Psi_{\b}(\mu/\k)\d\mu(  \k), \ee
with both integrals finite. Equivalently, 
$$
        \int_{\pd^2}\wt\lb
        \vcenter{\hbox{\raisebox{-35pt}{\begin{tikzpicture}
    \draw[thick] (-0.6,0.6)--(0,1.2)--(0.6,0.6);
    \draw[dashed] (-0.6,0.6)--(0,0)--(0.6,0.6);
    \draw[thick] (-0.6,-0.6)--(0,0)--(0.6,-0.6);
    \node[below right] at (0.5,0.65) {\small $\mu_1$};
    \node[below right] at (0.5,-0.55) {\small   $\mu_2$};
    \node[below left] at (-0.45,0.66) {\small  $\la_1$};
    \node[below left] at (-0.45,-0.55) {\small  $\la_2$};
    \node[above] at (0,0) {\small  $\pi_2$};
    \node[above] at (0,1.2) {\small  $\pi_1$};
    \node[above] at (0.35,-0.35){\textcolor{red}{\small  $\a$}};
    \node[above] at (0.35,0.85){\textcolor{red}{\small  $\a$}};
    \node[above] at (-0.35,-0.4){\textcolor{red}{\small  $\b$}};
    \node[above] at (-0.35,0.8){\textcolor{red}{\small  $\b$}};
\end{tikzpicture}}
}}\rb\d\mu(  \pi)=\int_{\pd^2}\wt\lb
\vcenter{\hbox{\raisebox{-35pt}{\begin{tikzpicture}
       \draw[thick] (-0.6,0.6)--(0,0)--(0.6,0.6);
       \draw[thick] (-0.6,-0.6)--(0,-1.2)--(0.6,-0.6);
       \draw[dashed] (-0.6,-0.6)--(0,0)--(0.6,-0.6);
        \node[above right] at (0.5,0.55) {\small $\mu_1$};
        \node[above right] at (0.5,-0.65) {\small  $\mu_2$};
        \node[above left] at (-0.5,0.55) {\small   $\la_1$};
        \node[above left] at (-0.5,-0.65) {\small  $\la_2$};
        \node[above] at (0.3,0.25) {\textcolor{red}{\small   $\b$}};
        \node[above] at (0.3,-0.95) {\textcolor{red}{\small  $\b$}};
        \node[above] at (-0.3,0.3) {\textcolor{red}{\small  $\a$}};
        \node[above] at (-0.3,-0.9) {\textcolor{red}{\small  $\a$}};
        \node[below] at (0,0) {\small  $\k_1$};
        \node[below] at (0,-1.2) {\small  $\k_2$};
\end{tikzpicture}}
}}\rb\d\mu(  \k).$$
\end{lemma}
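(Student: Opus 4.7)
The plan is to evaluate each side of~\eqref{eq:Cauchy identity} as a product of two one-variable integrals — one of $d$-variate gamma type, one of K-Bessel type — and then to match the two resulting expressions using the symmetry~\eqref{eq:symmetry of Bessel function} of the K-Bessel function.

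First I would unpack the definition~\eqref{eq:Baxter Q operator} of $\Psi_\gamma$. On the left-hand side, the product $\Psi_\beta(\pi/\la)\Psi_\alpha(\pi/\mu)$ factors over the two variables $\pi_1,\pi_2\in\pd$: the only ``coupling'' between indices $1$ and $2$ present in $\Psi_\gamma$ is the term $e^{-\tr[\pi_2(\la_1^{-1}+\mu_1^{-1})]}$, which depends on $\pi_2$ alone. The $\pi_1$-part reduces, via the involution $\pi_1\mapsto\pi_1^{-1}$ (valid by~\eqref{eq:involution invariance of mu}) and the Laplace transform~\eqref{eq:laplace transform}, to $\Gamma_d(\alpha+\beta)\abs{\la_1+\mu_1}^{-(\alpha+\beta)}$. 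The $\pi_2$-part is exactly $K_{-(\alpha+\beta)}(V,W)$ with $V=\la_1^{-1}+\mu_1^{-1}$ and $W=\la_2+\mu_2$, as read off from~\eqref{eq:K-Bessel function}. The same strategy on the right-hand side isolates a $\k_2$-integral equal to $\Gamma_d(\alpha+\beta)\abs{\la_2^{-1}+\mu_2^{-1}}^{-(\alpha+\beta)}$ and a $\k_1$-integral equal to $K_{\alpha+\beta}(V,W)$. The hypothesis $\alpha+\beta>\tfrac{d-1}{2}$ is precisely what ensures convergence of the two gamma-type integrals; the two K-Bessel integrals are finite for all parameters.

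To match the two resulting expressions, I would use the elementary determinant identity $\abs{A^{-1}+B^{-1}}=\abs{A+B}/(\abs{A}\,\abs{B})$ for $A,B\in\pd$, applied to the pairs $(\la_1,\mu_1)$ and $(\la_2,\mu_2)$, together with the symmetry $\abs{V}^\nu K_\nu(V,W)=\abs{W}^\nu K_{-\nu}(V,W)$ from~\eqref{eq:symmetry of Bessel function} to convert $K_{-(\alpha+\beta)}(V,W)$ into $K_{\alpha+\beta}(V,W)$. A short bookkeeping of the determinant exponents then shows that the prefactors on both sides collapse to the same expression $\abs{\la_1}^{-\alpha}\abs{\la_2}^{\beta}\abs{\mu_1}^{-\beta}\abs{\mu_2}^{\alpha}\,\abs{\la_2+\mu_2}^{-(\alpha+\beta)}$, yielding the identity.

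The main obstacle is purely cosmetic: tracking the many determinant factors in the comparison is tedious, but they are engineered to cancel precisely thanks to the K-Bessel symmetry. No deeper ingredient is needed, and the hypothesis $\alpha+\beta>\tfrac{d-1}{2}$ is used only to make the gamma integrals converge. As an alternative route one could work instead with the Whittaker function of two matrix arguments~\eqref{eq:whittaker function} and its symmetry~\eqref{eq:symmetry of whittaker function} in $(\alpha,\beta)$, in the same spirit as the proof of Lemma~\ref{lem:symmetry of skew whittaker function}; but the K-Bessel route is the most direct.
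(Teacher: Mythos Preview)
Your proposal is correct and follows essentially the same route as the paper. Both proofs factor each side into a $\Gamma_d(\alpha+\beta)$-type integral (over $\pi_1$, resp.\ $\k_2$) times a K-Bessel integral in the common arguments $V=\la_1^{-1}+\mu_1^{-1}$, $W=\la_2+\mu_2$, and then match the two K-Bessel pieces; the paper does this via the explicit change of variable of Lemma~\ref{lem:change of variable}, while you invoke the packaged symmetry~\eqref{eq:symmetry of Bessel function} directly --- the paper's own remark after the proof notes that these are equivalent. Your determinant bookkeeping and your claimed common prefactor $\abs{\la_1}^{-\alpha}\abs{\la_2}^{\beta}\abs{\mu_1}^{-\beta}\abs{\mu_2}^{\alpha}\abs{\la_2+\mu_2}^{-(\alpha+\beta)}$ (times $\Gamma_d(\alpha+\beta)K_{\alpha+\beta}(V,W)$) check out.
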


\begin{proof} Let $V=\la_1^{-1}+\mu_{1}^{-1}$ and $W=\la_2+\mu_2$. Using the integral formula \eqref{eq:laplace transform}, we have
\be\label{eq:Bessel integral 1}\text{LHS}\eqref{eq:Cauchy identity}=
\abs{\la_1\la_2}^{\a}\abs{\mu_1\mu_2}^{\b}\abs{(\la_1+\mu_1)^{-1}}^{\a+\b}\Gamma_d(\a+\b)\int_{\pd}\abs{\pi_2^{-1}}^{\a+\b}e^{-\tr[V\pi_2+W\pi_2^{-1}]}\d\mu(\pi_2),
\ee
\be\label{eq:Bessel integral 2}\text{RHS}\eqref{eq:Cauchy identity}=
\abs{\la_1^{-1}\la_2^{-1}}^{\b}\abs{\mu^{-1}_1\mu_2^{-1}}^{\a}\abs{\la_2^{-1}+\mu_2^{-1}}^{-(\a+\b)}\Gamma_d(\a+\b)\int_{\pd}\abs{\k_1}^{\a+\b}e^{-\tr[V\k_1+W\k_1^{-1}]}\d\mu(\k_1).
\ee
Let $U=W^{\frac{1}{2}}VW^{\frac{1}{2}}$, and consider the change of variable: \be\label{eq:smart change of variable}
\pi_2^{-1}:=W^{-\frac{1}{2}}U^{\frac{1}{2}}W^{-\frac{1}{2}}\k_1 W^{-\frac{1}{2}}U^{\frac{1}{2}}W^{-\frac{1}{2}}\in\pd.
\ee
By Lemma \ref{lem:change of variable}, we have $\d\mu(\pi_2)=\d\mu(\k_1)$, and $\tr[V\pi_2]=\tr[W\k_1^{-1}], \tr[W\pi_2^{-1}]=\tr[V\k_1]$. The determinantal  factors also match:  
$$\abs{\la_1\la_2}^{\a}\abs{\mu_1\mu_2}^{\b}\abs{(\la_1+\mu_1)^{-1}}^{\a+\b}\abs{W^{-1}V}^{\a+\b}=\abs{\la_1^{-1}\la_2^{-1}}^{\b}\abs{\mu^{-1}_1\mu_2^{-1}}^{\a}\abs{\la_2^{-1}+\mu_2^{-1}}^{-(\a+\b)}.$$
Finally, the finiteness of the integral factor is ensured under the assumption $\a+\b>\frac{d-1}{2}$. For instance,
$$\int_{\pd}\abs{\pi_2^{-1}}^{\a+\b}e^{-\tr[V\pi_2+W\pi_2^{-1}]}\d\mu(\pi_2)<\int_{\pd}\abs{\pi_2^{-1}}^{\a+\b}e^{-\tr[W\pi_2^{-1}]}\d\mu(\pi_2)=\abs{W^{-1}}^{\a+\b}\Gamma_d(\a+\b)<\infty.$$
\end{proof}

\begin{remark}
The two integrals in \eqref{eq:Bessel integral 1} and \eqref{eq:Bessel integral 2} can be identified with the $K$-Bessel functions defined in \eqref{eq:K-Bessel function}. Consequently, Lemma \ref{lem:Cauchy identity} could alternatively be proved by invoking the well-definedness and symmetry property of Whittaker functions \eqref{eq:symmetry of whittaker function}, following the same approach in the proof of Lemma~\ref{lem:symmetry of skew whittaker function}. 
Here, we adopt an equivalent proof based on a change of variables, a general method in deriving certain Cauchy/Littlewood-type identities, such as those arising in the context of the geometric last passage percolation and the log-gamma polymer \cite{BarraquandCorwinYang}.
In our setting, a straightforward change of variables  such as $\pi_2^{-1}=(\k_1\star V)\star W^{-1}$ would fail due to the lack of commutativity. The appropriate change of variable \eqref{eq:smart change of variable} is motivated by the symmetry of K-Bessel functions and Whittaker functions of matrix arguments, as explained in Section \ref{subsec: preliminary}.
\end{remark}

\begin{remark}
   By the same line of reasoning, the Cauchy type identity also holds for Whittaker functions with $n$ coordinates. Precisely, for $\la, \mu\in \pd^n$, the same Cauchy type identity \eqref{eq:Cauchy identity} holds for Whittaker function defined by:
\begin{equation*}
    \Psi_{\ga}(\la/\mu)=\prod_{i=1}^{n} \abs{\mu_i(\la_i)^{-1}}^\ga \cdot
      e^{-\tr[\sum_{i=1}^{n}\mu_i(\la_i)^{-1}+\sum_{i=1}^{n-1}\la_{i+1}(\mu_i)^{-1}]}.
\end{equation*}
\end{remark}

\begin{lemma}[Littlewood type identity]\label{lem:Littlewood identity}
For parameters $u,\alpha\in \RR$ with $u+\alpha>\frac{d-1}{2}$ and all $\k\in\pd^2$, we have 
\be \label{eq:Littlewood identity}
\int_{\pd^2}\abs{\pi_2\pi_1^{-1}}^{u}\Psi_{\a}(\pi/\k)\d\mu( \pi)=\int_{\pd^2}\abs{\la_2\la_1^{-1}}^{u}\Psi_{\a}(\k/\la)\d\mu( \la),
\ee
with both integrals finite. Equivalently, 
$$
\int_{\pd^2}\wt\lb\raisebox{-35pt}{\begin{tikzpicture}
    \draw[dashed](0.6,0)--(0,-0.6);
    \draw[thick] (0,0.6)--(0.6,0);
    \draw[thick] (0,-0.6)--(0.6,-1.2);
    \node[below right] at (0.5,-1.1) {\small $\k_2$};
    \node[below right] at (0.5,.1) {\small $\k_1$};
    \node[below] at (0,-0.6) {\small $\pi_2$};
    \node[above] at (0,0.6) {\small $\pi_1$};
    \node[above] at (0.35,0.3) {\textcolor{red}{\small $\alpha$}};
    \node[above] at (0.35,-0.9) {\textcolor{red}{\small $\alpha$}};
    \node[left] at (-0.6,0) {\textcolor{red}{\small $u$}};
    \draw[thick] (0,0.6) arc (90:270:0.6);
\end{tikzpicture}}\rb\d\mu( \pi)=
 \int_{\pd^2}\wt\lb\raisebox{-35pt}{\begin{tikzpicture}
    \draw[dashed](0,0.6)--(0.6,0);
    \draw[thick] (0,-0.6)--(0.6,0);
    \draw[thick] (0,0.6)--(0.6,1.2);
    \node[above right] at (0.5,-.1) {\small $\k_2$};
    \node[above right] at (0.5,1.1) {\small $\k_1$};
    \node[below] at (0,-0.6) {\small $\la_2$};
    \node[above] at (0,0.6) {\small $\la_1$};
    \node[above] at (0.3,-0.3) {\textcolor{red}{\small $\alpha$}};
    \node[above] at (0.3,0.9) {\textcolor{red}{\small $\alpha$}};
    \node[left] at (-0.6,0) {\textcolor{red}{\small $u$}};
    \draw[thick] (0,0.6) arc (90:270:0.6);
\end{tikzpicture}} \rb\d\mu( \la).$$
\end{lemma}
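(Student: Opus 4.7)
My plan is to mimic the proof of Lemma \ref{lem:Cauchy identity}: reduce both sides to a one-dimensional integral of K-Bessel type via the Laplace transform identity \eqref{eq:laplace transform}, and then match them using either the symmetry property \eqref{eq:symmetry of Bessel function} of the K-Bessel function or a change of variable furnished by Lemma \ref{lem:change of variable}.

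Explicitly, using the definition \eqref{eq:Baxter Q operator} of $\Psi_\a$, the LHS of \eqref{eq:Littlewood identity} reads
\begin{equation*}
\int_{\pd^2} \abs{\pi_2}^u \abs{\pi_1}^{-u-\a} \abs{\k_1}^\a \abs{\k_2 \pi_2^{-1}}^\a \, e^{-\tr[\k_1 \pi_1^{-1} + \k_2 \pi_2^{-1} + \pi_2 \k_1^{-1}]} \, \d\mu(\pi_1)\d\mu(\pi_2).
\end{equation*}
Integrating first over $\pi_1$ via the involution invariance \eqref{eq:involution invariance of mu} and the Laplace identity \eqref{eq:laplace transform} (which requires $u+\a > \tfrac{d-1}{2}$) gives a factor $\Gamma_d(u+\a)\abs{\k_1}^{-u-\a}$, and leaves
\begin{equation*}
\text{LHS}\eqref{eq:Littlewood identity} = \Gamma_d(u+\a) \abs{\k_1}^{-u} \abs{\k_2}^\a \int_{\pd} \abs{\pi_2}^{u-\a} e^{-\tr[\k_2 \pi_2^{-1} + \pi_2 \k_1^{-1}]} \d\mu(\pi_2).
\end{equation*}
Analogously, integrating first over $\la_2$ on the RHS of \eqref{eq:Littlewood identity} yields
\begin{equation*}
\text{RHS}\eqref{eq:Littlewood identity} = \Gamma_d(u+\a) \abs{\k_1}^{-\a} \abs{\k_2}^u \int_{\pd} \abs{\la_1}^{\a-u} e^{-\tr[\la_1 \k_1^{-1} + \k_2 \la_1^{-1}]} \d\mu(\la_1).
\end{equation*}

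To conclude, I would apply Lemma \ref{lem:change of variable} with $W = \k_2$, $V = \k_1^{-1}$, and $Y = \la_1$: setting $\pi_2 = X$ as in the lemma one gets $\d\mu(\pi_2) = \d\mu(\la_1)$, the exponentials match identically ($\tr[\k_2 \pi_2^{-1}] = \tr[\la_1 \k_1^{-1}]$ and $\tr[\pi_2 \k_1^{-1}] = \tr[\k_2 \la_1^{-1}]$), while a direct computation of determinants gives $\abs{\pi_2} = \abs{\k_1}\abs{\k_2}\abs{\la_1}^{-1}$, which exactly absorbs the discrepancy between the two prefactors $\abs{\k_1}^{-u}\abs{\k_2}^\a$ and $\abs{\k_1}^{-\a}\abs{\k_2}^u$. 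Alternatively, and perhaps more transparently, one can recognize the remaining integrals as the K-Bessel functions $K_{u-\a}(\k_1^{-1},\k_2)$ and $K_{\a-u}(\k_1^{-1},\k_2)$ respectively, and then invoke the symmetry property \eqref{eq:symmetry of Bessel function} with $\nu = u-\a$ to conclude equality.

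Finiteness of both sides is handled as in Lemma \ref{lem:Cauchy identity}: the condition $u+\a > \tfrac{d-1}{2}$ ensures convergence of the $\pi_1$ (resp.\ $\la_2$) integral, and the reduced K-Bessel integrals are finite for all real indices since the integrand decays rapidly in both the small- and large-eigenvalue regimes of $\pi_2$ (resp.\ $\la_1$) thanks to the two trace terms. No step is really the bottleneck; the only subtlety—following the philosophy stressed in the remark after Lemma \ref{lem:Cauchy identity}—is that a naive change of variables like $\pi_2 = \la_1^{-1} \star \k_1 \star \k_2$ would fail due to non-commutativity, whereas the symmetric substitution built in Lemma \ref{lem:change of variable} precisely handles this.
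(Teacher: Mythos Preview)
Your proposal is correct and follows essentially the same approach as the paper: both integrate out $\pi_1$ (resp.\ $\la_2$) via \eqref{eq:laplace transform}, reduce each side to a one-variable K-Bessel-type integral with the same prefactors you wrote, and then match them using the change of variable from Lemma~\ref{lem:change of variable} with $W=\k_2$, $V=\k_1^{-1}$, $Y=\la_1$ (your alternative via the K-Bessel symmetry \eqref{eq:symmetry of Bessel function} is also noted in the paper's remark after Lemma~\ref{lem:Cauchy identity}). Your finiteness justification for the reduced integral is in fact slightly cleaner than the paper's.
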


\begin{proof}
Using integral formula \eqref{eq:laplace transform}, we have
$$\text{LHS}\eqref{eq:Littlewood identity}=
\abs{\k_1}^{-u}\abs{\k_2}^{\a}\Gamma_d(\a+u)\int_{\pd}\abs{\pi_2}^{u-\a}e^{-\tr[\k_1^{-1}\pi_2+\k_2\pi_2^{-1}]}\d\mu(\pi_2),$$

$$\text{RHS}\eqref{eq:Littlewood identity}=
\abs{\k_1}^{-\a} \abs{\k_2}^{u}\Gamma_d(\a+u)\int_{\pd}\abs{\la_1}^{\a-u}e^{-\tr[\k_1^{-1}\la_1+\k_2\la_1^{-1}]}\d\mu(\la_1).
$$
Let $U=\k_2^{\frac{1}{2}}\k_1^{-1}\k_2^{\frac{1}{2}}$, and consider the change of variable: 
$$
\pi_2^{-1}:=\k_2^{-\frac{1}{2}}U^{\frac{1}{2}}\k_2^{-\frac{1}{2}}\la_1 \k_2^{-\frac{1}{2}}U^{\frac{1}{2}}\k_2^{-\frac{1}{2}}.
$$
By Lemma \ref{lem:change of variable}, we have $\d\mu(\pi_2)=\d\mu(\la_1)$, and $\tr[\k_2\pi_2^{-1}]=\tr[\k_1^{-1}\la_1]$,  $\tr[\k_1^{-1}\pi_2]=\tr[\k_2\la_1^{-1}]$. The determinantal factors match:  
$$\abs{\k_1}^{-u}\abs{\k_2}^{\a}\abs{\k_2^2U^{-1}}^{u-\a}=\abs{\k_1}^{-\a} \abs{\k_2}^{u}.$$
Finally, the finiteness of the integral factors is ensured under the assumption $\a+\b>\frac{d-1}{2}$. For instance,
$$\int_{\pd}\abs{\pi_2}^{u-\a}e^{-\tr[\k_1^{-1}\pi_2+\k_2\pi_2^{-1}]}\d\mu( \pi_2)<\int_{\pd}\abs{\pi_2}^{u-\a}e^{-\tr[\k_2\pi_2^{-1}]}\d\mu( \pi_2)=\abs{\k_2}^{u-\a}\Gamma_d(\a-u)<\infty.$$
\end{proof}
\begin{lemma}\label{lem:induction inequality}
    For all $u, v, \a\in\RR$ with $\min \lb \a+u,\a+v, u+v \rb >\frac{d-1}{2}$, there exists constants $C>0$ and $w\in\RR$ with $\min \lb w+u, \a+w \rb >\frac{d-1}{2}$, such that for all $\la_1, \la_2\in\pd$, we have 
\begin{equation}\label{eq:induction inequality}
     \int_{\pd^2}\wt\lb\raisebox{-40pt}{\begin{tikzpicture}
    \draw[dashed](0,0.6)--(0.6,0);
    \draw[thick] (0,-0.6)--(0.6,0);
    \draw[thick] (0,0.6)--(0.6,1.2);
    \node[below right] at (0.5,-0) {\small $\mu_2$};
    \node[above right] at (0.5,1.1) {\small $\mu_1$};
    \node[below] at (-0.1,-0.6) {\small $\la_2$};
    \node[above] at (-0.1,0.6) {\small $\la_1$};
    \node[above] at (0.3,-0.3) {\textcolor{red}{\small $\alpha$}};
    \node[below] at (0.3,0.95) {\textcolor{red}{\small $\alpha$}};
    \node[right] at (1.2,0.6) {\textcolor{red}{\small $v$}};
    \draw[thick] (0.6,0) arc (-90:90:0.6);
\end{tikzpicture}}\rb\d\mu( \mu_1)\d\mu( \mu_2)
\leq C\wt\lb\raisebox{-33pt}{\begin{tikzpicture}
    \node[below] at (0.2,-0.6) {\small $\la_2$};
    \node[above] at (0.2,0.6) {\small $\la_1$};
    \node[right] at (0.6,0){\textcolor{red}{\small $w$}};
    \draw[thick] (0,-0.6) arc (-90:90:0.6);
\end{tikzpicture}}\rb.
\end{equation}

\end{lemma}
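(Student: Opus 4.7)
The plan is to reduce the double integral on the LHS to a product of a gamma integral and a matrix $K$-Bessel function of a single variable whose determinant equals $\abs{\la_2\la_1^{-1}}$, and then to control the $K$-Bessel factor by a suitable power of this determinant.

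First, expanding the Boltzmann weights via \eqref{eq:Boltzmann weights IW}, the LHS equals
\begin{equation*}
\int_{\pd^2}\abs{\la_1\mu_1^{-1}}^\alpha\abs{\la_2\mu_2^{-1}}^\alpha\abs{\mu_2\mu_1^{-1}}^v e^{-\tr[\la_1\mu_1^{-1}+\la_2\mu_2^{-1}+\mu_2\la_1^{-1}]}\d\mu(\mu_1)\d\mu(\mu_2).
\end{equation*}
I would then apply the substitutions $\nu_1:=\la_1\star\mu_1^{-1}$ and $\nu_2:=\la_2^{-1}\star\mu_2$, both valued in $\pd$, which preserve $\d\mu$ by the involution-invariance \eqref{eq:involution invariance of mu} and $\GL_d$-invariance \eqref{eq:GL_d invariance of mu}. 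Setting $A:=\la_2^{1/2}\la_1^{-1}\la_2^{1/2}\in\pd$, so that $\abs{A}=\abs{\la_2\la_1^{-1}}$, the exponent becomes $\tr[\nu_1+\nu_2^{-1}+A\nu_2]$ and the integral factorizes as
\begin{equation*}
\abs{\la_2\la_1^{-1}}^v\cdot \Gamma_d(\alpha+v)\cdot K_{v-\alpha}(A,\id),
\end{equation*}
where the $\nu_1$-integral is evaluated through \eqref{eq:laplace transform} (finite by the hypothesis $\alpha+v>\frac{d-1}{2}$) and the remaining $\nu_2$-integral is the $K$-Bessel function \eqref{eq:K-Bessel function}.

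It then suffices to bound $\abs{A}^v K_{v-\alpha}(A,\id)\leq C\abs{A}^w$ uniformly in $A\in\pd$. I would proceed by cases on the sign of $v-\alpha$, using two elementary bounds: when $v-\alpha\leq -\frac{d-1}{2}$, dropping $e^{-\tr[A\nu_2]}\leq 1$ gives $K_{v-\alpha}(A,\id)\leq \Gamma_d(\alpha-v)$, so the bound holds with $w=v$; when $v-\alpha\geq\frac{d-1}{2}$, the substitution $\nu_2=A^{-1/2}ZA^{-1/2}$ followed by $e^{-\tr[AZ^{-1}]}\leq 1$ gives $K_{v-\alpha}(A,\id)\leq C\abs{A}^{\alpha-v}$, so $w=\alpha$. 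In the intermediate range $|v-\alpha|<\frac{d-1}{2}$, Hölder's inequality with auxiliary exponents $\nu_1>\frac{d-1}{2}>-\frac{d-1}{2}>\nu_2$ satisfying $v-\alpha=s\nu_1+(1-s)\nu_2$, $s\in(0,1)$, yields
\begin{equation*}
K_{v-\alpha}(A,\id)\leq K_{\nu_1}(A,\id)^s\,K_{\nu_2}(A,\id)^{1-s}\leq C\abs{A}^{-s\nu_1}.
\end{equation*}

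The main obstacle will be to verify that the exponent $w$ produced by this case analysis can always be chosen to satisfy the two constraints $w+u>\frac{d-1}{2}$ and $\alpha+w>\frac{d-1}{2}$ simultaneously. This amounts to a careful tuning of the auxiliary parameters $\nu_1,\nu_2$ in the interpolation step, using the three hypotheses $\alpha+u$, $\alpha+v$, $u+v>\frac{d-1}{2}$. The symmetry identity \eqref{eq:symmetry of Bessel function}, which swaps the roles of $v-\alpha$ and $\alpha-v$ at the cost of a factor $\abs{A}^{v-\alpha}$, should provide the extra flexibility needed to realize the required $w$ uniformly in the parameter regime of the lemma.
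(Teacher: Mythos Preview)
Your reduction is correct and matches the paper's: after integrating out $\mu_1$ via \eqref{eq:laplace transform} and changing variable in $\mu_2$, both arrive at $\int_{\pd}|s|^{\alpha-v}e^{-\tr[s+\Delta s^{-1}]}\d\mu(s)$ with $\Delta=A$ (the paper's $s$ is your $\nu_2^{-1}$). One notational slip: with the convention $x\star y=y^{1/2}xy^{1/2}$, the substitution that gives $\tr[\mu_2\la_1^{-1}]=\tr[A\nu_2]$ is $\nu_2=\mu_2\star\la_2^{-1}=\la_2^{-1/2}\mu_2\la_2^{-1/2}$, not $\la_2^{-1}\star\mu_2$.

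The case analysis is where you diverge from the paper. The paper uses only two cases. When $v-\alpha>\tfrac{d-1}{2}$ it takes $w=\alpha$, as in your case (b). When $v-\alpha\leq\tfrac{d-1}{2}$, rather than H\"older-interpolate, it splits the domain into $\{|s|\leq1\}$ and $\{|s|\geq1\}$: on the first piece it replaces $|s|^{\alpha-v}$ by the larger $|s|^{-(\frac{d-1}{2}+\ep)}$ (valid since $\alpha-v\geq-\tfrac{d-1}{2}$ and $|s|\leq 1$); on the second it invokes the elementary matrix bound $e^{\tr[s]}\geq C|s|^\beta$ (isolated as a separate lemma) to absorb the power of $|s|$ into the exponential. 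Both pieces then reduce to gamma integrals in $\Delta$, yielding $w=v-\tfrac{d-1}{2}-\ep$ with a \emph{single} tunable parameter $\ep\in\bigl(0,\min(u+v,v+\alpha)-\tfrac{d-1}{2}\bigr)$ read off directly from the hypotheses. This makes the step you flag as the main obstacle explicit, whereas your H\"older route leaves two auxiliary parameters $\nu_1,\nu_2$ to be optimized jointly. Note also that the symmetry \eqref{eq:symmetry of Bessel function} does not buy extra flexibility here: applying it to $K_{v-\alpha}$ and then bounding $K_{\alpha-v}$ by your case (a) reproduces exactly the exponent $w=\alpha$ you already have.
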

\begin{proof}
Integrate out $\mu_1$ using formula \eqref{eq:laplace transform}, the inequality \eqref{eq:induction inequality} reduces to
$$\abs{\la_1^{-1}}^{v}\abs{\la_2}^{\a}\int_{\pd}\abs{\mu_2}^{v-\a}e^{-\tr[\mu_2\la_1^{-1}+\la_2\mu_2^{-1}]}\d\mu_2\leq C \abs{\la_2\la_1^{-1}}^{w}.$$
(The constant $C$ may vary throughout the proof.) Denote $ \Delta:=\la_1^{-1}\star\la_2$, and consider the change of variable $s=\mu_2^{-1}\star \la_2$. The original lemma is equivalent to finding the required $C,w$ such that for all $\Delta\in\pd$, 
\be\label{eq:inequality for induction}\int_{\pd} \abs{s}^{\a-v}e^{-\tr[s+\Delta s^{-1}]}\d\mu(  s) \leq C\abs{\Delta}^{w-v}.\ee
We split the proof into two cases, choosing $w$ in each case so that 
$\min(w+u,w+\a)>\frac{d-1}{2}$.
\begin{itemize}
    \item Case $v-\a>\frac{d-1}{2}.$ Take $w:=\a$.
    $$\text{LHS}\eqref{eq:inequality for induction}\leq \int_{\pd}\abs{s}^{\a-v}e^{-\tr[\Delta s^{-1}]}\d\mu(  s)= \Gamma_d\lb v-\a\rb \abs{\Delta}^{\a-v}.$$
    
    \item Case $v-\a\leq\frac{d-1}{2}.$ Fix any $0<\ep<\min(u+v,v+\a)-\frac{d-1}{2}$, take $w:=v-(\frac{d-1}{2}+\ep)$.
    $$\text{LHS}\eqref{eq:inequality for induction}\leq \int_{\abs{s}\leq 1} \abs{s}^{\a-v}e^{-\tr[\Delta s^{-1}]}\d\mu(  s) +
    \int_{\abs{s}\geq 1} \abs{s}^{\a-v}e^{-\tr[s]}e^{-\tr[\Delta s^{-1}]}\d\mu(  s).$$
    
    For the integral over $\{\abs{s}\leq 1\}$, we have
    \begin{equation*}
        \begin{split}
            \int_{\abs{s}\leq 1} \abs{s}^{\a-v}e^{-\tr[\Delta s^{-1}]}\d\mu(  s)&=
    \int_{\abs{s}\geq 1} \abs{s}^{v-\a}e^{-\tr[\Delta s]}\d\mu(  s)< 
    \int_{\abs{s}\geq 1} \abs{s}^{\frac{d-1}{2}+\ep}e^{-\tr[\Delta s]}\d\mu(  s)\\
    &< \int_{\pd} \abs{s}^{\frac{d-1}{2}+\ep}e^{-\tr[\Delta s]}\d\mu(  s)
    =\Gamma_d\lb\frac{d-1}{2}+\ep\rb\abs{\Delta}^{-(\frac{d-1}{2}+\ep)},
        \end{split}
    \end{equation*}
    where we use the assumption $v-\a\leq \frac{d-1}{2}$ in the first inequality. For the integral over $\{\abs{s}\geq 1\}$, we have
    $$\int_{\abs{s}\geq 1} \abs{s}^{\a-v}e^{-\tr[s]}e^{-\tr[\Delta s^{-1}]}\d\mu(  s)\leq
    \int_{\abs{s}\geq 1} C\abs{s}^{w-v}e^{-\tr[\Delta s^{-1}]}\d\mu(  s)< C\Gamma_d\lb v-w \rb\abs{\Delta}^{w-v},
    $$
    where we use the following auxiliary Lemma \ref{lem:elementary inequality} in the first inequality by taking $\b=\a-w=\frac{d-1}{2}+\ep-(u-\a)>0$.  
\end{itemize}
\end{proof}

\begin{lemma}\label{lem:elementary inequality}
    For all $d\in\NN_+, \b>0$, there exists constant $C=C(d,\b)>0$ such that for all $x\in\pd$,
    $e^{\tr[x]}\geq C\abs{x}^{\b}$.
\end{lemma}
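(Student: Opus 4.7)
The plan is to reduce the matrix inequality to a one-dimensional calculus problem via the spectral theorem. Since $x \in \mathcal{P}_d$ is symmetric positive definite, it has positive eigenvalues $\lambda_1, \dots, \lambda_d > 0$ with $\tr[x] = \sum_{i=1}^d \lambda_i$ and $|x| = \prod_{i=1}^d \lambda_i$. Taking logarithms, the inequality to establish becomes
\[
\sum_{i=1}^d \lambda_i \;\geq\; \log C + \beta \sum_{i=1}^d \log \lambda_i,
\]
so it suffices to bound the scalar function $\varphi(t) = t - \beta \log t$ from below on $(0,\infty)$.

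A straightforward calculus computation shows that $\varphi$ attains its global minimum at $t = \beta$, with minimum value $\varphi(\beta) = \beta - \beta \log \beta = \beta(1 - \log \beta)$. Applying this bound coordinate-wise and summing over the eigenvalues yields
\[
\tr[x] - \beta \log |x| \;=\; \sum_{i=1}^d \bigl( \lambda_i - \beta \log \lambda_i \bigr) \;\geq\; d\beta(1 - \log \beta).
\]
Exponentiating gives $e^{\tr[x]} \geq C |x|^\beta$ with the explicit constant $C = C(d,\beta) = e^{d\beta(1-\log\beta)} = e^{d\beta} \beta^{-d\beta}$, which is strictly positive as required.

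There is no real obstacle here: the only input beyond elementary calculus is the spectral decomposition of symmetric positive definite matrices, which reduces the trace and determinant to sums and products of positive eigenvalues. The proof is essentially a one-line reduction followed by minimizing a convex scalar function.
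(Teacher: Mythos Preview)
Your proof is correct. Both your argument and the paper's begin with the spectral decomposition, writing $\tr[x]=\sum_i\lambda_i$ and $|x|=\prod_i\lambda_i$, but they diverge from there. You apply the scalar bound $t-\beta\log t\geq\beta(1-\log\beta)$ to each eigenvalue individually and sum; the paper instead first collapses the eigenvalues via the AM--GM inequality $\sum_i\lambda_i\geq d|x|^{1/d}$ and then invokes a separately established $d=1$ case (proved via the Taylor expansion of $e^t$) applied to the single quantity $|x|^{1/d}$. Your route is more direct, avoids the detour through AM--GM, and yields an explicit constant $C=(e/\beta)^{d\beta}$ that is in fact sharp (equality holds at $x=\beta\,\id_d$), whereas the paper's constant is not. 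The paper's approach has the minor structural feature of isolating the scalar case as a standalone step, but for this lemma that buys nothing substantive.
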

\begin{proof}
    The $d=1$ case is elementary: since $e^x>1$  for all $x>0$, we only need to consider the inequality for $x\geq 1$. Indeed, $e^x=\sum_{k=0}^{\infty}x^{k}$, it suffices to take $C=\frac{1}{n!}$ for $n=\lfloor\b\rfloor+1$.

    For general $d\geq 1$ and for any $x\in\pd$, let $\la_1,\dots,\la_d>0$ denote its $d$ eigenvalues. Using Arithmetic-Geometric mean inequality and the result for $d=1$, 
    $$e^{\tr[x]}\geq e^{d(\prod_{i=1}^d \la_i)^{1/d}}\geq e^{\abs{x}^{1/d}}\geq C(1,d\b)\abs{x}^{\b}.$$
\end{proof}

\begin{proposition}[Finiteness of normalization constant]\label{prop:Finiteness of Partition function} 
Let $\bta=(\ta_1, \cdots, \ta_N), u, v$ be parameters such that for all $1\leq k\leq N$,
\be\label{eq:parameters constraints for finite normalization constant}
u+v, u+\ta_i, v+\ta_i>\frac{d-1}{2}.
\ee
Consider the strip $\strip$ whose edges are labeled according to the rule \eqref{eq:labels on strip edges maximal current}. 
For any down-right path $\hp$ and any fixed $\la_1^0\in\pd$, the normalization constant $\mathcal{Z}^{\bta,u,v}$ defined by 
  \be\label{eq: partition function}
\mathcal{Z}^{\bta,u,v}=\int_{\pd^{2N+1}} \wt^{\mathcal{GP}}\lb \bl\rb \prod_{\substack{i=1,2, \;\; j=0,\dots, N \\ (i,j)\neq (1,0)}}\d\mu( \lambda_i^{j}),
   \ee is finite, and does not depend on $\hp$ nor $\la^0_1$.
\end{proposition}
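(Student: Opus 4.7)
I would prove the three claims --- independence of $\la_1^0$, independence of $\hp$, and finiteness --- in this order, using the identities from Section~\ref{subsec:Properties of Gibbs measure}. For the independence in $\la_1^0$, I would apply the translation invariance \eqref{eq:translation invariance}: perform the change of variables $\la_i^j = \tilde\la_i^j \star \la_1^0$ for every vertex $(i,j) \neq (1,0)$. The integrand becomes $\wt^{\gp}(\tilde\bl)$ evaluated at $\tilde\la_1^0 = \id$, and each reference measure $\d\mu(\la_i^j)$ is preserved by the $\GL_d$-invariance \eqref{eq:GL_d invariance of mu}. Hence $\mathcal{Z}^{\bta,u,v}$ equals its value at $\la_1^0 = \id$, independent of $\la_1^0$.

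For the independence in $\hp$, I would first observe from the labeling rule \eqref{eq:labels on strip edges maximal current} together with the periodicity convention that, for any down-right path of length $N$ on the strip, the ordered list of edge labels $(\gamma_1,\ldots,\gamma_N)$ is a permutation of $(\ta_1,\ldots,\ta_N)$: the horizontal and vertical edge indices cover $N$ consecutive integers modulo $N$. Two down-right paths with the same endpoints differ by a finite sequence of elementary flips $\rightarrow\downarrow \leftrightarrow \downarrow\rightarrow$ at a single interior vertex, each of which modifies the two-layer graph precisely in the local ``H'' pattern appearing in Lemma~\ref{lem:Cauchy identity}; integrating out the affected pair of vertices on either side gives the same contribution by that Cauchy-type identity, so the value of the full integral is preserved. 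Paths with different endpoints correspond to differently permuted label sequences in the same local graph topology, and any permutation decomposes into adjacent transpositions, each again handled by Lemma~\ref{lem:Cauchy identity}. Hence $\mathcal{Z}^{\bta,u,v}$ is independent of $\hp$.

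For finiteness, by the previous step it suffices to consider the canonical path $\bw = (\rightarrow)^N$. I would then induct on $N$, peeling off the rightmost column: Lemma~\ref{lem:induction inequality} applied with $\a = \gamma_N$ integrates out $\la_1^N$ and $\la_2^N$ and bounds the right arc of label $v$ together with its two incident solid edges by a single arc of a new label $w$ at $(\la_1^{N-1}, \la_2^{N-1})$, up to a finite multiplicative constant. This reduces the problem to the same integral on a strip of width $N-1$, with right-boundary parameter $w$ in place of $v$; iterating $N$ times leaves a single arc evaluated against $\d\mu(\la_2^0)$, a finite Gamma integral by \eqref{eq:laplace transform}. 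The main obstacle is controlling the generated parameter $w$ at each step so that the hypotheses of Lemma~\ref{lem:induction inequality}, namely $\min(w+u,\gamma_{k-1}+w) > \frac{d-1}{2}$, persist through the induction; this requires a case analysis paralleling the one inside the lemma's proof (the two cases ``$v-\a > \frac{d-1}{2}$'' and ``$v-\a \le \frac{d-1}{2}$''), and one may exploit the path-independence established above to reorder $\ta_1,\ldots,\ta_N$ into a sequence for which the constraints are never violated.
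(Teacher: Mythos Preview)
Your overall architecture matches the paper's, but two steps contain genuine gaps.

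\textbf{Path-independence.} Your argument only invokes the Cauchy-type identity (Lemma~\ref{lem:Cauchy identity}), which handles bulk flips. But two down-right paths on the strip can have different endpoints, hence different \emph{shapes} (different counts of $\rightarrow$ versus $\downarrow$), not merely permuted labels on the same graph. Connecting such paths requires one-step updates at the boundary, and those are governed by the Littlewood-type identity (Lemma~\ref{lem:Littlewood identity}), which you never use. The paper explicitly appeals to both Lemma~\ref{lem:Cauchy identity} and Lemma~\ref{lem:Littlewood identity} for this step.

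\textbf{Finiteness: base case.} Iterating Lemma~\ref{lem:induction inequality} all the way down to ``a single arc'' does not yield a finite bound: after $N$ applications you are left with only the two arc weights $\abs{\la_2^0(\la_1^0)^{-1}}^{u}\abs{\la_2^0(\la_1^0)^{-1}}^{w_N}$ and no exponential factor, so $\int_{\pd}\abs{\la_2^0}^{u+w_N}\d\mu(\la_2^0)$ diverges and \eqref{eq:laplace transform} does not apply. The paper instead stops the induction at $N=1$ and computes that case explicitly (obtaining $\Gamma_d(u+v)\Gamma_d(\ta_1+v)\Gamma_d(\ta_1+u)$), which is where the exponential in the solid-edge weight is actually used.

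For the parameter-tracking in the induction, your instinct to reorder is right; the paper makes it precise via Lemma~\ref{lem:symmetry of skew whittaker function}: at each step one may assume the label being peeled is $\min_i\ta_i$, so that the new right-boundary parameter $w$ produced by Lemma~\ref{lem:induction inequality} automatically satisfies $w+\ta_i\geq w+\ta_N>\frac{d-1}{2}$ for all remaining $i$. A single global reordering (e.g.\ into decreasing order, so the minimum is always peeled last) achieves the same effect.
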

\begin{proof}
Due to the Cauchy/Littlewood type identities (Lemma \ref{lem:Cauchy identity} and Lemma \ref{lem:Littlewood identity}), the normalization constant  is preserved under one-step path updates. Hence the value of $\mathcal{Z}^{\bta,u,v}$ does not depend on the down-right path $\hp$.
Due to the translation invariance of $\wt^{\gp}$ \eqref{eq:translation invariance} and the $\GL_d$-invariance of $\mu$ \eqref{eq:GL_d invariance of mu}, the integral \eqref{eq:value of Z for N=1} does not depend on the value of $\la^0_1$:  
\begin{multline*}
    \mathcal{Z}^{\bta,u,v} =\int_{\pd^{2N+1}} \wt^{\mathcal{GP}}\lb  \bl\star (\la_1^0)^{-1}\rb \prod_{\substack{i=1,2, \;\; j=0,\dots, N \\ (i,j)\neq (1,0)}}\d\mu\lb\lambda_i^{j}\star (\la_1^0)^{-1}\rb \\
    =\int_{\pd^{2N+1}} \wt^{\mathcal{GP}}\lb \bl\star (\la_1^0)^{-1}\rb \prod_{\substack{i=1,2, \;\; j=0,\dots, N \\ (i,j)\neq (1,0)}}\d\mu( \lambda_i^{j}).
\end{multline*}
 It remains to prove its finiteness for a horizontal path, we prove by induction on its length $N$. When $N=1$, a direct computation shows that: for any fixed $\la^0_1\in\pd$,
    \be\label{eq:value of Z for N=1}\int_{\pd^3}\wt\lb\raisebox{-40pt}{\begin{tikzpicture}
    \draw[dashed] (0,0.6)--(0.6,0);
    \draw[thick] (0,-0.6)--(0.6,0);
    \draw[thick] (0,0.6)--(0.6,1.2);
    \node[below  ] at (0.8,0) {\small$\la_2^{1}$};
    \node[above  ] at (0.8,1.2) {\small $\la_1^{1}$};
    \node[below] at (0,-0.6) {\small $\la_2^{0}$};
    \node[above] at (0,0.6) {\small $\la_1^{0}$};
    \node[below] at (0.35,-0.3) {\textcolor{red}{\small $\ta_1$}};
    \node[above] at (0.35,0.9) {\textcolor{red}{\small $\ta_1$}};
    \node[left] at (-0.6,0) {\textcolor{red}{\small $u$}};
    \node[right] at (1.2,0.8) {\textcolor{red}{\small $v$}};
    \draw[thick] (0,0.6) arc (90:270:0.6);
    \draw[thick] (0.6,0) arc (-90:90:0.6);
\end{tikzpicture}}\rb\d\mu( \la_1^{1}) \d\mu( \la_2^{0}) \d\mu( \la_2^{1})=\Gamma_d(u+v)\Gamma_d(\ta_1+v)\Gamma_d(\ta_1+u)<\infty.
\ee
Let us prove the induction step. By Lemma \ref{lem:induction inequality}, for all $u,\a$ with $\min \lb \a+u,\a+v, u+v \rb >\frac{d-1}{2}$, we can find $C>0$ and $w\in\RR$ with $\min \lb w+u, \a+w \rb >\frac{d-1}{2}$, such that 
\be\label{eq:induction inequality 2}\forall \la_1^{N-1}, \la_2^{N-1}\in\pd, \qquad
 \int_{\pd^2}\wt\lb\raisebox{-40pt}{\begin{tikzpicture}
    \draw[dashed](0,0.6)--(0.6,0);
    \draw[thick] (0,-0.6)--(0.6,0);
    \draw[thick] (0,0.6)--(0.6,1.2);
    \node[below right] at (0.5,-0) {\small $\la_2^{N}$};
    \node[above right] at (0.5,1.1) {\small $\la_1^{N}$};
    \node[below] at (-0.1,-0.6) {\small $\la_2^{N-1}$};
    \node[above] at (-0.1,0.6) {\small $\la_1^{N-1}$};
    \node[above] at (0.3,-0.3) {\textcolor{red}{\small $\alpha$}};
    \node[below] at (0.3,0.95) {\textcolor{red}{\small $\alpha$}};
    \node[right] at (1.2,0.6) {\textcolor{red}{\small $v$}};
    \draw[thick] (0.6,0) arc (-90:90:0.6);
\end{tikzpicture}}\rb\d\mu( \la_1^{N})\d\mu( \la_2^{N})
\leq C\wt\lb\raisebox{-33pt}{\begin{tikzpicture}
    \node[below] at (0.2,-0.6) {\small $\la_2^{N-1}$};
    \node[above] at (0.2,0.6) {\small $\la_1^{N-1}$};
    \node[right] at (0.6,0){\textcolor{red}{\small $w$}};
    \draw[thick] (0,-0.6) arc (-90:90:0.6);
\end{tikzpicture}}\rb.
\ee
Thanks to Lemma \ref{lem:symmetry of skew whittaker function}, we may suppose without loss of generality that $\ta_N=\min_{1\leq i\leq N}(\ta_i)$. Take $\a=\ta_N$ in \eqref{eq:induction inequality 2}, the normalization constant for path of length $N$ is bounded by constant times that for path of length $N-1$, which is finite by hypothesis of recurrence. Remark that the parameters taken above is such that for all $1\leq i\leq N-1$, $w+\ta_i\geq w+\ta_N>\frac{d-1}{2}$.
\end{proof}

\begin{remark}
    Note that the condition $\ta_i+\ta_j>\frac{d-1}{2}$ for the parameters in the maximal current regime \eqref{eq:parameters inhomogeneous general regime},  which ensures that the random variables  $W(n,m)$ are well-defined, is not required to guarantee the finiteness of the normalization constant $\mathcal{Z}^{\bta,u,v}$. 
\end{remark}

\subsection{Push-block dynamics and local Markov kernels}
\label{subsec:Push-block dynamics and local Markov kernels}
Recall the two-layer Gibbs measure  $\wt^{\gp}(\d\bl)$ defined in Definition \ref{def:two layer Gibbs measure}. The goal of this section is to construct a family of consistent Markov kernels $\{\U^{\gp,\gq}\}$, in the sense that for all $\bl'\in \pd^{2N+2}$ and all down-right paths $\hp,\hq$ such that  $\hq$ is above $\hp$, we have 
\be\label{eq:compatibility of local operator with wt IW} \int_{\pd^{2N+2}}\U^{\gp,\mathcal{G}\hq}\lb\bl'|\bl\rb\wt^{\gp}\lb\bl\rb\d\mu(\bl)
    =\wt^\gq\lb\bl'\rb.
\ee
A priori, multiple families of consistent kernels may exist. We choose one in the following, corresponding to what we refer to as the push-block dynamics.

We first define the one-step dynamics, i.e. when $\hq=\Tilde{\hp}$ differs from $\hp$ by a single vertex $\p_j$. We require the dynamics to be local, in the sense that the transition kernel $\U^{\gp,\mathcal{G}\Tilde{\hp}}$ depends only on the neighboring vertices and adjacent edge labels, that is, $\p_{j-1}, \p_j,\p_{j+1}$ and $\ga_{j-1},\ga_{j+1}$ for a bulk vertex ($1\leq j\leq N-1$). That is, we consider one-step kernels of the form 
$$\U^{\gp,\mathcal{G}\widetilde{\hp}}\lb\widetilde{\bl}|\bl\rb = \lb
\prod_{i\neq j} \delta_{\widetilde{\lambda}^{i}=\lambda^{i}} \rb\,\cdot\,\Ubw(\widetilde{\lambda}^{j}|\lambda^{j-1},\lambda^{j},\lambda^{j+1};\ga_j,\ga_{j+1}),
$$
where the superscript of the kernel reflects the local updates of the path shape.
Furthermore, we require the local kernel  $\Ubw(\cdot|\lambda^{j-1},\lambda^{j},\lambda^{j+1};\ga_{j-1},\ga_{j+1})$ be idempotent \cite{Blackwell}, that is, does not depend on $\la^j$. The kernel $\Ubw\lb\cdot|\la^{j-1},\la^{j+1};\ga_{j-1},\ga_{j+1}\rb$ is hence uniquely determined. Local Markov kernels $\Ulw,\Urw$, corresponding to updates at the left and right boundaries, are defined in a similar way. We formalize the above construction in the following definition.

\begin{definition}[Local Markov kernels]\label{def:local Markov kernels}
Let $\a,\b, u, v\in\RR $ be parameters such that $\a+\b, \a+u, \a+v>\frac{d-1}{2}$. For $\la,\mu,\pi\in\pd^2$, we define the local Markov kernels to be:
    \begin{subequations}
\be\label{eq:bulk local operator IW}
\Ubw(\pi|\la,\mu;\alpha,\beta):=\wt\lb\raisebox{-35pt}{\begin{tikzpicture}
    \draw[thick] (-0.6,0.6)--(0,1.2)--(0.6,0.6);
    \draw[dashed] (-0.6,0.6)--(0,0)--(0.6,0.6);
    \draw[thick] (-0.6,-0.6)--(0,0)--(0.6,-0.6);
    \node[below right] at (0.5,0.65) {\small $\mu_1$};
    \node[below right] at (0.5,-0.55) {\small   $\mu_2$};
    \node[below left] at (-0.45,0.66) {\small  $\la_1$};
    \node[below left] at (-0.45,-0.55) {\small  $\la_2$};
    \node[above] at (0,0) {\small  $\pi_2$};
    \node[above] at (0,1.2) {\small  $\pi_1$};
    \node[above] at (0.35,-0.35){\textcolor{red}{\small  $\alpha$}};
    \node[above] at (0.35,0.85){\textcolor{red}{\small  $\alpha$}};
    \node[above] at (-0.35,-0.4){\textcolor{red}{\small  $\beta$}};
    \node[above] at (-0.35,0.8){\textcolor{red}{\small  $\beta$}};
\end{tikzpicture}}\rb\bigg/
\int_{\pd^2}\wt\lb\raisebox{-35pt}{\begin{tikzpicture}
        \draw[thick] (-0.6,0.6)--(0,0)--(0.6,0.6);
        \draw[thick] (-0.6,-0.6)--(0,-1.2)--(0.6,-0.6);
        \draw[dashed] (-0.6,-0.6)--(0,0)--(0.6,-0.6);
        \node[above right] at (0.5,0.55) {\small $\mu_1$};
        \node[above right] at (0.5,-0.65) {\small  $\mu_2$};
        \node[above left] at (-0.5,0.55) {\small   $\la_1$};
        \node[above left] at (-0.5,-0.65) {\small  $\la_2$};
        \node[above] at (0.3,0.25) {\textcolor{red}{\small   $\beta$}};
        \node[above] at (0.3,-0.95) {\textcolor{red}{\small  $\beta$}};
        \node[above] at (-0.3,0.3) {\textcolor{red}{\small  $\alpha$}};
        \node[above] at (-0.3,-0.9) {\textcolor{red}{\small  $\alpha$}};
        \node[below] at (0,0) {\small  $\k_1$};
        \node[below] at (0,-1.2) {\small  $\k_2$};
\end{tikzpicture}}\rb\d\mu( \k),
\ee

\be\label{eq:local operator IW left boundary}
\Ulw(\pi|\la;u,\alpha):=\wt\lb\raisebox{-35pt}{\begin{tikzpicture}
    \draw[dashed](0.6,0)--(0,-0.6);
    \draw[thick] (0,0.6)--(0.6,0);
    \draw[thick] (0,-0.6)--(0.6,-1.2);
    \node[below right] at (0.5,-1.1) {\small $\la_2$};
    \node[below right] at (0.5,.1) {\small $\la_1$};
    \node[below] at (0,-0.6) {\small $\pi_2$};
    \node[above] at (0,0.6) {\small $\pi_1$};
    \node[above] at (0.35,0.3) {\textcolor{red}{\small $\alpha$}};
    \node[above] at (0.35,-0.9) {\textcolor{red}{\small $\alpha$}};
    \node[left] at (-0.6,0) {\textcolor{red}{\small $u$}};
    \draw[thick] (0,0.6) arc (90:270:0.6);
\end{tikzpicture}}\rb 
\bigg/
\int_{\pd^2}\wt\lb\raisebox{-35pt}{\begin{tikzpicture}
    \draw[dashed](0,0.6)--(0.6,0);
    \draw[thick] (0,-0.6)--(0.6,0);
    \draw[thick] (0,0.6)--(0.6,1.2);
    \node[above right] at (0.5,-.1) {\small $\la_2$};
    \node[above right] at (0.5,1.1) {\small $\la_1$};
    \node[below] at (0,-0.6) {\small $\k_2$};
    \node[above] at (0,0.6) {\small $\k_1$};
    \node[above] at (0.3,-0.3) {\textcolor{red}{\small $\alpha$}};
    \node[above] at (0.3,0.9) {\textcolor{red}{\small $\alpha$}};
    \node[left] at (-0.6,0) {\textcolor{red}{\small $u$}};
    \draw[thick] (0,0.6) arc (90:270:0.6);
\end{tikzpicture}}\rb\d\mu( \k),
\ee

\be\label{eq:local operator IW right boundary}
\Urw(\pi|\la;\alpha,v):=\wt\lb\raisebox{-35pt}{\begin{tikzpicture}
    \draw[dashed](0.6,0)--(0,0.6);
    \draw[thick] (0,0.6)--(0.6,1.2);
    \draw[thick] (0,-0.6)--(0.6,0);
    \node[ left] at (0.1,-0.6) {\small  $\la_2$};
    \node[ left] at (0.1,0.6) {\small $\la_1$};
    \node[below  ] at (0.7,0) {\small $\pi_2$};
    \node[above  ] at (0.7,1.2) {\small  $\pi_1$};
    \node  at (0.25,-0.1) {\textcolor{red}{\small $\alpha$}};
    \node  at (0.25,1.1) {\textcolor{red}{\small $\alpha$}};
    \node[right] at (1.2,0.6) {\textcolor{red}{\small $v$}};
    \draw[thick] (0.6,0) arc (-90:90:0.6);
\end{tikzpicture}}\rb
\bigg/
\int_{\pd^2}\wt\lb\raisebox{-35pt}{\begin{tikzpicture}
    \draw[dashed] (0,0)--(0.6,0.6);
    \draw[thick] (0.6,0.6)--(0,1.2);
    \draw[thick] (0,0)--(0.6,-0.6);
    \node[above left] at (0.1,-0.1) {\small  $\la_2$};
    \node[above left] at (0.1,1) {\small  $\la_1$};
    \node[below ] at (0.7,-0.6) {\small $\k_2$};
    \node[above ] at (0.7,0.6) {\small  $\k_1$};
    \node  at (0.35,-0.1) {\textcolor{red}{\small $\alpha$}};
    \node  at (0.35,1.1) {\textcolor{red}{\small $\alpha$}};
    \node[right] at (1.2,0) {\textcolor{red}{\small  $v$}};
    \draw[thick] (0.6,-0.6) arc (-90:90:0.6);
\end{tikzpicture}}\rb\d\mu( \k).
\ee
\end{subequations} 
These are well-defined Markov kernels due to the finiteness of the integral appearing in the denominator, the Cauchy type identity (Lemma \ref{lem:Cauchy identity}) and the Littlewood type identity (Lemma \ref{lem:Littlewood identity}).
\end{definition}

\begin{definition}[Two-layer inverse-Wishart Markov dynamics] \label{def:two layer IW dynamics}
The two-layer inverse-Wishart Markov dynamics refers to a family of Markov kernels $\{ \U^{\gp,\mathcal{G}\widetilde{\hp}} \}$ on the state space $\pd^{2N+2}$, where $\hp$ and $\widetilde{\hp}$ are down-right paths that differ by a single vertex.
When the difference is a bulk vertex $\p_j$, the kernel is defined as:
\be\label{eq:two layer transition kernel}\U^{\gp,\mathcal{G}\widetilde{\hp}}\lb\widetilde{\bl}|\bl\rb = \lb
\prod_{i\neq j} \delta_{\widetilde{\lambda}^{i}=\lambda^{i}} \rb\,\cdot\,\Ubw(\widetilde{\lambda}^{j}|\lambda^{j-1},\lambda^{j+1};\gamma_j,\gamma_{j+1}).
\ee
Recall that  $\gamma_j,\gamma_{j+1}$ are labels on the edges $(\p_{j-1},\p_j), (\p_j,\p_{j+1})$ respectively. When the difference is a  left/right boundary vertex, the kernel is defined respectively as:
\begin{align*}
\U^{\gp,\mathcal{G}\widetilde{\hp}}\lb\widetilde{\bl}|\bl\rb &= \lb
\prod_{i\neq 0} \delta_{\widetilde{\lambda}^{i}=\lambda^{i}} \rb\,\cdot\,\Ulw(\widetilde{\lambda}^{0}|\lambda^{1};u,\gamma_1),\\   
\U^{\gp,\mathcal{G}\widetilde{\hp}}\lb\widetilde{\bl}|\bl\rb &= \lb
\prod_{i\neq N} \delta_{\widetilde{\lambda}^{i}=\lambda^{i}} \rb\,\cdot\,\Urw(\widetilde{\lambda}^{N}|\lambda^{N-1};\gamma_N,v).
\end{align*}
 
For any pair of down-right paths $\big(\hp, \hq\big)$, the Markov kernels  $\U^{\gp,\mathcal{G}\hq}\big(\widetilde{\bl}|\bl\big)$ are defined by composing the one-step kernels defined above. Note that the definition does not depend on the choice of intermediate paths. Suppose $\hp_1\rightarrow\hp_2\rightarrow\hp_4$ and $\hp_1\rightarrow\hp_3\rightarrow\hp_4$ are two sequences of one-step updates from $\hp_1$ to $\hp_4$,  then $$\U^{\gp_1,\gp_4}=\U^{\gp_2,\gp_4}\U^{\gp_1,\gp_2}=\U^{\gp_3,\gp_4} \U^{\gp_1,\gp_3},$$
due to the delta function structure in the definition of one-step kernels. 
Hence the two-layer inverse-Wishart Markov dynamics $\U^{\gp,\mathcal{G}\hq}\lb\widetilde{\bl}|\bl\rb$ is  well-defined and consistent with the two-layer Gibbs measure $\wt^{\gp}\lb\d\bl\rb$ in the sense given by \eqref{eq:compatibility of local operator with wt IW}.
\end{definition}

\begin{remark} 
    The name `push-block' in this section's title comes from the dynamics defined on Gelfand-Tsetlin patterns, where the $\frac{N(N-1)}{2}$ particles $X(t)=\lb x_{i,j}(t)\rb_{1\leq j\leq i\leq N}$ experience two sorts of interactions: they are `pushed' by the particle behind and `blocked' by the particle in front \cite{Warren,WarrenWindridge,BorodinCorwin}. When the particles are positive definite matrices-valued, there exists a Markovian dynamics on the triangular arrays whose boundary marginal has interesting autonomous Markovian dynamics, see \cite[Remark 3.4]{AristaBisiOConnell} and references therein. 
\end{remark}

\subsection{First-layer marginal}
\label{subsec:First-layer marginal}
The connection between the polymer recurrence \eqref{eq:recurrence inhomogeneous strip} and the constructed push-block dynamics (Definition \ref{def:two layer IW dynamics}) surfaces when looking at the first layer marginal of the two-layer Gibbs measure. More precisely, we show that under the dynamics $\U^{\gp,\mathcal{G}\hq}\lb{\bl'}|\bl\rb$, the first layer marginal $\bl_1\in\pd^N$ has an autonomous Markov dynamics $\rmU^{\hp,\hq}\lb\bl_1'|\bl_1\rb$ which coincides with the inverse-Wishart polymer recurrence in the sense precised in Lemma \ref{lem: IW First layer operator}. 

\begin{lemma}[Marginal dynamics]\label{lem: IW First layer operator}
We have the following:
\begin{enumerate}
    \item [(1)] Under $\Ubw\lb\pi|\la,\mu;\a,\b\rb$, the distribution of $\pi_1$ depends on $\la,\mu$ only through $\la_1,\mu_1$. The transition kernel is given explicitly by
    \be\label{eq:first layer dynamics operator bulk}\rmUbw(\pi_1|\la_1,\mu_1;\alpha,\beta)=Z(\la_1,\mu_1;\alpha,\beta)^{-1}\abs{\pi_1^{-1}}^{\a+\b}e^{-\tr[\pi_1^{-1}(\la_1+\mu_1)]},\ee
     for some finite normalization constant $Z(\la_1,\mu_1;\a,\b)>0$. Equivalently,
     for an independent random variable $W\sim\Wisv(\a+\b)$,
$$\pi_1\stackrel{d}{=}W\star(\la_1+\mu_1).$$
    
    \item[(2)] Under $\Ulw\lb\pi|\la;u,\a\rb$, or  $\Urw\lb\pi|\la;\a,v\rb$ respectively, the law of $\pi_1$ depends on $\la$ only through $\la_1$. The transition kernel writes respectively as:
    $$\rmUlw(\pi_1|\la_1;u,\alpha)=\frac{\abs{\pi_1^{-1}}^{\a+u}e^{-\tr[\la_1\pi_1^{-1}]}}{Z(\la_1;u,\alpha)},\qquad 
\rmUrw(\pi_1|\la_1;\alpha,v)=\frac{\abs{\pi_1^{-1}}^{\a+v}e^{-\tr[\la_1\pi_1^{-1}]}}{Z(\la_1;v,\alpha)},$$
for some finite normalization constants $Z(\la_1; u, \alpha) > 0$ and $Z(\la_1; v, \alpha) > 0$, respectively. Equivalently, for an independent random variable $W\sim\Wisv(\a+u)$ (or ~$\Wisv(\a+v)$ resp.),
$$\pi_1\stackrel{d}{=}W\star\la_1. $$
\end{enumerate} 
 As a consequence, for any pair of down-right path $(\hp,\hq)$ such that $\hq$ is above $\hp$, under the push-block dynamics 
    $\U^{\gp,\gq}\lb\bl'|\bl\rb$, the first-layer configuration $\bl_1$ has an autonomous dynamics which we denote by $\mathrm{U}^{\hp,\hq}\lb\bl'_1|\bl_1\rb$. This dynamics coincides with the inverse-Wishart polymer recurrence, in the sense that if we fix an initial configuration $\bl_1\in\pd^{N+1}$ along path $\hp$, then under polymer recurrence  \eqref{eq:recurrence inhomogeneous strip}, the probability density of configuration $\bl'_1$ along path $\hq$ is $\mathrm{U}^{\hp,\hq}\lb\bl'_1|\bl_1\rb$.
\end{lemma}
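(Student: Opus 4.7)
My plan is to exploit the product structure of the numerator Boltzmann weight in each local kernel: the dependence on $\pi_1$ separates from the dependence on $\pi_2$, which makes marginalization over $\pi_2$ essentially cost-free.

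\bigskip

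For the bulk kernel $\Ubw(\pi|\la,\mu;\a,\b)$, I would expand the numerator according to the rules in \eqref{eq:Boltzmann weights IW}. The only lines incident to $\pi_1$ are the two solid edges to $\la_1$ (label $\b$) and $\mu_1$ (label $\a$), whose product equals
$$f(\pi_1)=\abs{\la_1}^{\b}\abs{\mu_1}^{\a}\abs{\pi_1^{-1}}^{\a+\b}\exp\Big(-\tr[\pi_1^{-1}(\la_1+\mu_1)]\Big),$$
while all remaining Boltzmann factors involve $\pi_2$ and give a function $g(\pi_2;\la_1,\la_2,\mu_1,\mu_2)$ independent of $\pi_1$. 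The denominator of \eqref{eq:bulk local operator IW}, via the Cauchy-type identity (Lemma \ref{lem:Cauchy identity}), equals $\int f(\pi_1)g(\pi_2)\,\d\mu(\pi)=\int f\,\d\mu(\pi_1)\cdot\int g\,\d\mu(\pi_2)$. Integrating $\Ubw$ over $\pi_2$ therefore yields $f(\pi_1)/\int f(\pi_1)\,\d\mu(\pi_1)$, which depends on $(\la,\mu)$ only through $(\la_1,\mu_1)$ and coincides with \eqref{eq:first layer dynamics operator bulk}. The Laplace-type formula \eqref{eq:laplace transform} together with the involution invariance \eqref{eq:involution invariance of mu} of $\d\mu$ then give $Z(\la_1,\mu_1;\a,\b)=\Gamma_d(\a+\b)\abs{\la_1+\mu_1}^{-(\a+\b)}$, finite under $\a+\b>\frac{d-1}{2}$. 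The identification $\pi_1\stackrel{d}{=}W\star(\la_1+\mu_1)$ with $W\sim\Wisv(\a+\b)$ is a direct change of variable $\pi_1=(\la_1+\mu_1)^{1/2}W(\la_1+\mu_1)^{1/2}$, using the $\GL_d$-invariance \eqref{eq:GL_d invariance of mu} of $\d\mu$ to transport the inverse-Wishart density of $W$ onto $f(\pi_1)/Z(\la_1,\mu_1;\a,\b)$.

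\bigskip

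The boundary kernels $\Ulw$ and $\Urw$ are treated by the same strategy. In $\Ulw$, the only line incident to $\pi_1$ besides the solid edge to $\la_1$ is the arc to $\pi_2$, whose weight $\abs{\pi_2\pi_1^{-1}}^u$ conveniently splits as $\abs{\pi_1^{-1}}^u\cdot\abs{\pi_2}^u$. Thus the $\pi_1$-factor is $\abs{\pi_1^{-1}}^{\a+u}\exp(-\tr[\la_1\pi_1^{-1}])$, the remaining $\pi_2$-part is integrable, and the denominator factorizes by the Littlewood-type identity (Lemma \ref{lem:Littlewood identity}). A change of variable as above yields $\pi_1\stackrel{d}{=}W\star\la_1$ with $W\sim\Wisv(\a+u)$. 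The right-boundary kernel $\Urw$ is handled symmetrically with $v$ in place of $u$.

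\bigskip

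For the final claim, each one-step kernel \eqref{eq:two layer transition kernel} only updates a single vertex $\la^j$, and by (1)--(2) the conditional distribution of the new first-layer value $\la_1^j$ given the rest of $\bl$ depends solely on $(\la_1^{j-1},\la_1^{j+1})$ (or on one neighbor at a boundary) together with the adjacent edge labels. Hence under successive push-block updates the first-layer process $\bl_1$ evolves autonomously, with kernel $\mathrm{U}^{\hp,\hq}$ obtained by composing $\rmUbw,\rmUlw,\rmUrw$ along any sequence of corner flips from $\hp$ to $\hq$. Comparing the bulk update $\la_1^j=W\star(\la_1^{j-1}+\la_1^{j+1})$ with $W\sim\Wisv(\gamma_j+\gamma_{j+1})=\Wisv(\ta_n+\ta_m)$ (using the labeling rule \eqref{eq:labels on strip edges maximal current} at the newly inserted bulk vertex $(n,m)$), and the analogous matches $\Wisv(\ta_m+u)$, $\Wisv(\ta_m+v)$ at the two diagonal boundaries, with the polymer recurrence \eqref{eq:recurrence inhomogeneous strip}, we obtain that $\mathrm{U}^{\hp,\hq}$ is precisely the transition kernel induced by the polymer recurrence between the two down-right paths. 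The only slightly delicate step is ensuring that the denominator in each local kernel factorizes across the two layers, which is exactly the role of the Cauchy and Littlewood identities of Section \ref{subsec:Properties of Gibbs measure}.
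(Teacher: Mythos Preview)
Your proposal is correct and follows essentially the same approach as the paper: both arguments rest on the observation that the numerator Boltzmann weight factorizes as a product of a $\pi_1$-part and a $\pi_2$-part, and then use the Cauchy/Littlewood identities to rewrite the denominator as the $\pi$-integral of the numerator, so that the kernel itself splits into two normalized factors and marginalizing over $\pi_2$ is immediate. You are slightly more explicit than the paper about why the denominator factorizes (invoking the identities rather than just writing down the product decomposition), but this is a presentational difference only.
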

\begin{proof}
    We prove the claim for bulk kernel $\Ubw$ and left boundary kernel $\Ulw$, the argument for the right boundary kernel $\Urw$ is analogous.
Because of the finiteness of denominator \eqref{eq:bulk local operator IW}, the bulk local kernel \eqref{eq:bulk local operator IW} decomposes as:
\begin{multline*}
    \Ubw\lb\pi|\la,\mu;\alpha,\beta\rb 
    =
    \frac{\abs{\pi_1^{-1}}^{\a+\b}
    e^{-\tr[\pi_1^{-1}(\la_1+\mu_1)]}}{ \int_{\pd}\d\mu( \pi_1) 
    \abs{\pi_1^{-1}}^{\a+\b}
    e^{-\tr[\pi_1^{-1}(\la_1+\mu_1)]}}
\cdot \\
    \frac{\abs{\pi_2^{-1}}^{\a+\b}
    e^{-\tr[\pi_2(\mu_1^{-1}+\la_1^{-1})+\pi_2^{-1}(\la_2+\mu_2)]}
     }{\int_{\pd}\d\mu( \pi_2)
    \abs{\pi_2^{-1}}^{\a+\b}
    e^{-\tr[\pi_2(\mu_1^{-1}+\la_1^{-1})+\pi_2^{-1}(\la_2+\mu_2)]}}.
\end{multline*}
Integrating over $\pi_2$ yields the marginal kernel
\begin{equation*}
    \int_{\pd}\Ubw\lb\pi|\la,\mu;\alpha,\beta\rb \d\mu( \pi_2)=\frac{\abs{\pi_1^{-1}}^{\a+\b}
    e^{-\tr[\pi_1^{-1}(\la_1+\mu_1)]}}{Z(\la_1,\mu_1;\alpha,\beta)}=:\rmUbw(\pi_1|\la_1,\mu_1;\alpha,\beta).
\end{equation*}
    
Because of the finiteness of the denominator (Lemma \ref{lem:Littlewood identity}), the left boundary local kernel \eqref{eq:local operator IW left boundary} 
decomposes as:
$$\Ulw\lb\pi|\k;u,\alpha\rb=
\frac{\abs{\pi_1^{-1}}^{\a+u} e^{-\tr[\k_1\pi_1^{-1}]} }{\int_{\pd}\d\mu( \pi_1)
\abs{\pi_1^{-1}}^{\a+u} e^{-\tr[\k_1\pi_1^{-1}]} }
\frac{\abs{\pi_2^{-1}}^{\a-u} e^{-\tr[\pi_2\k_1^{-1}+\k_2\pi_2^{-1}]} }{\int_{\pd} \d\mu( \pi_2)\abs{\pi_2^{-1}}^{\a-u} e^{-\tr[\pi_2\k_1^{-1}+\k_2\pi_2^{-1}]}}.$$
Integrating over $\pi_2$ yields the marginal kernel
$$\int_{\pd}\Ulw\lb\pi|\k;u,\alpha\rb\d\mu( \pi_2)=Z(\k_1;u,\alpha)^{-1}\abs{\pi_1^{-1}}^{\a+u} e^{-\tr[\k_1\pi_1^{-1}]}=:\rmUlw(\pi_1|\k_1;u,\alpha).$$
Because the kernel $\U^{\gp,\Tilde{\gp}}$ is local \eqref{eq:two layer transition kernel}, so is the kernel of the first-layer dynamics: the transition kernel $\mathrm{U}^{\hp,\Tilde{\hp}}$ factors as products of local kernels $\Ubw, \Ulw, \Urw$ and delta functions. For general down-right paths $\big(\hp, \hq\big)$ with $\hq$ above $\hp$, the Markov  kernel $\mathrm{U}^{\hp, \hq}$ is well-defined by composing the Markov kernels defined above and does not depend on the choice of intermediate paths.
\end{proof}

\subsection{Proof of Theorem \ref{thm: inhomogeneous maximal current strip}}
In Section \ref{subsec:Push-block dynamics and local Markov kernels}, we have constructed the push-block dynamics $\U^{\gp,\gq}\lb\bl'|\bl\rb$  under which the two-layer Gibbs measure $\wt^{\gp}\lb\d\bl\rb$ is consistent \eqref{eq:compatibility of local operator with wt IW}. Recall the decomposition $\bl=(\bl_1, \bl_2)\in\left(\pd^{N+1}\right)^2$, we define the first-layer marginal density by
\be\label{eq:first layer marginal}
\widehat{\wt}^{\hp}(\bl_1):=\int_{\pd^{N+1}}\wt^{\gp}\lb\bl\rb\d\mu(\bl_2).
\ee
By Proposition \ref{prop:Finiteness of Partition function}, for any fixed $\la^0_1\in\pd$, 
$$\mathcal{Z}^{\bta,u,v}=\int_{\pd^{N}} \widehat{\wt}^{\mathcal{P}}\lb \bl_1\rb \prod_{k=1}^N\d\mu( \lambda_1^k)<\infty.$$
Hence 
$\wt^{\hp}(\d\bl_1)=\wt^{\hp}(\bl_1)\d\mu(\bl_1)$ 
is a well-defined $\sigma$-finite measure on $\pd^{N+1}$, which is a requirement for applying Tonelli's theorem below. We are now ready to prove the main theorem of this section.

\begin{proof}[Proof of Theorem \ref{thm: inhomogeneous maximal current strip}] \label{subsec:Proof of inhomogeneous strip max current}
As remarked in \eqref{eq:maximal current stationary measure is wt gp}, up to the normalization constant $\mathcal{Z}^{\bta,u,v}$, the two-layer Whittaker process with $\mu$-distributed starting matrix equals the two-layer Gibbs measure $\wt^{\gp}(\d\bl)$. As a consequence, their first-layer marginal coincides: $$\frac{1}{\mathcal{Z}^{\bta,u,v}}\widehat{\wt}^{\hp}(\bl_1)\d\mu(\bl_1)=
\d\mu(\la^0_1)\times
\d\bP^{\bg(\hp),\bw(\hp)}_{\la^0_1}\lb (\la_1^i)_{1\leq i \leq N}\rb.$$
On the other hand, by Lemma \ref{lem: IW First layer operator}, the first-layer dynamics defined by Markov kernels $\{\rmU^{\hp,\Tilde{\hp}}\}$ coincides with the polymer recurrence. Therefore, the consistency in Theorem \ref{thm: inhomogeneous maximal current strip} is equivalent to the consistency of the first layer marginal of the two-layer Gibbs measure with the first layer dynamics. In other words, it suffice to prove that: for any down-right paths $\hp,\hq$ with $\hq$ above $\hp$ and all $\bl_1'\in\pd^{N+1}$,  
$$\int_{\pd^{N+1}}\mathrm{U}^{\hp,\hq}\lb\bl'_1|\bl_1\rb\widehat{\wt}^{\hp}(\bl_1)\d\mu(\bl_1)=\widehat{\wt}^\hq\lb\bl'_1\rb.$$
Indeed, 
\begin{equation*}
    \begin{split}
        \widehat{\wt}^\hq\lb\bl'_1\rb&= 
    \int_{\pd^{N+1}}\d\mu(\bl'_2)\wt^{\gq}(\bl')\\
    &=  \int_{\pd^{N+1}}\d\mu(\bl'_2) \int_{\pd^{2N+2}}\d\mu(\bl)
    \wt^{\gp}(\bl) \U^{\gp,\gq}\lb\bl'|\bl\rb\\
    &=  \int_{\pd^{N+1}}\d\mu(\bl_1)\int_{\pd^{N+1}}\d\mu(\bl_2)
    \wt^{\gp}(\bl) \int_{\pd^{N+1}}\d\mu(\bl'_2)\U^{\gp,\gq}\lb\bl'|\bl\rb\\
&=\int_{\pd^{N+1}}\d\mu(\bl_1)\widehat{\wt}^\hp(\bl_1)\mathrm{U}^{\hp,\hq}\lb\bl'_1|\bl_1\rb,
    \end{split}
\end{equation*}
where we use the consistency relation \eqref{eq:compatibility of local operator with wt IW} in the second equality and Tonelli's Theorem in the third equality.
\end{proof}
In the special case where the model is homogeneous $(\bta=\ta^N)$, we recover Theorem \ref{thm: homogeneous strip maximal current}. 

\begin{proof}[Proof of Theorem \ref{thm: homogeneous strip maximal current}]
When $\bta=(\ta)^N$, consider the model on a strip with horizontal bottom boundary $\bw(\hp^{\bw}_{\circ})=\lb\rightarrow\rb^N$ and initial condition $\bl_1=(\la^j_1)_{0\leq j \leq N}$ given by  \eqref{eq:stationary measure for homogeneous strip model in the maximal current regime}, i.e.
$$ \d\mu( \la_1^0)\times\d\bP^{\ta,u,v}_{\la^0_1}\lb  (\la^j_1)_{1\leq j \leq N}\rb .$$ 
For all $k\geq 0$, we have  $\bg\lb\tau_k\hp^{\bw}_{\circ}\rb=(\ta)^N$ and $\bw\lb\tau_k\hp^{\bw}_{\circ}\rb=(\rightarrow)^N$, by Theorem \ref{thm: inhomogeneous maximal current strip},  along down-right paths  $\tau_k\hp^{\bw}_{\circ}$, the partition function process has the same distribution. In other words, 
the $\sigma$-finite measure \eqref{eq:stationary measure for homogeneous strip model in the maximal current regime}
is a stationary measure in the sense precised in Definition \ref{def:homogeneous strip model}.
\end{proof}

\subsection{Markovian description of the two-layer matrix Whittaker process}\label{subsec:Probabilistic description of two-layer matrix Whittaker process} 
In this section we give the two-layer matrix Whittaker process $\d\PP^{\bg,\bw}_{\la^0_1}\lb\bl\rb$ \eqref{eq:general two layer matrix whittaker process} a probabilistic interpretation, which is adapted from the scalar case studied in \cite{Barraquand}. Recall the skew Whittaker function $\Psi_{\ga_1,\dots,\ga_k}(\la/\mu)$ defined by \eqref{eq:branching rule, skew whittaker function}. For $1\leq x\leq N$ and $\la,\mu\in\pd^2$, define
\be\label{eq: initial condition of survived inhomogeneous random walk}
\P_0(\la)=\frac{1}{\mathcal{Z}^{\bg,u,v}}\abs{\la_2}^{u}\delta_{\la_1^0}(\la_1)H_{0,N}(\la_2\star \la_1^{-1}),\ee
and 
\begin{subequations}\label{eq:transition kernel of survived inhomogeneous random walk}
\begin{align}
\P_{x,N}^{\rightarrow}(\la,\mu)&=\frac{H_{x,N}(\mu_2\star\mu_1^{-1})}{H_{x-1,N}(\la_2\star\la_1^{-1})}
    \Psi_{\ga_{x}}(\mu/\la),
    \\
    \P_{x,N}^{\downarrow}(\la,\mu)&=\frac{H_{x,N}(\mu_2\star\mu_1^{-1})}{H_{x-1,N}(\la_2\star\la_1^{-1})}
    \Psi_{\ga_{x}}(\la/\mu),
\end{align}
\end{subequations}  
where the function $H_{x,N}:\pd\rightarrow\RR_{+}$ is defined by 
$$H_{x,N}(\mu)=\int_{\pd^2} \Psi_{\ga_{x+1},\dots,\ga_N}\lb 
 (\la_1^N, \la_2^N)/(\id,\mu)\rb \abs{\la_2^N(\la_1^N)^{-1}}^v \d\mu(\la_1^N)\d\mu(\la_2^N).$$

\begin{proposition}\label{prop: inhomogeneous markovian description}
    Under the probability measure $\d\PP^{\bg,\bw}_{\la^0_1}\lb\bl\rb$, the process $x\mapsto \la^x\in\pd^2$ is an inhomogeneous Markov chain with initial distribution $\P_0(\la^0)$ \eqref{eq: initial condition of survived inhomogeneous random walk} and transition kernels $\P^{w_x}_{x,N}(\la^{x-1},\la^{x})$ \eqref{eq:transition kernel of survived inhomogeneous random walk}.  
\end{proposition}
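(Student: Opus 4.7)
The joint density \eqref{eq:general two layer matrix whittaker process} already factorizes into two boundary weights at $\la^0,\la^N$ times pair factors $\Psi_{\ga_i}$ depending only on $(\la^{i-1},\la^i)$ (with arguments swapped when $w_i=\downarrow$), so the Markov property of $x\mapsto\la^x$ is automatic once conditional densities are well defined. The substantive task is to compute the marginal law of each prefix $(\la^0,\dots,\la^x)$ explicitly, and to read off the transition kernel as the ratio of two consecutive marginals, matching $\P_0$ and $\P^{w_x}_{x,N}$.

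The key step is the marginalization: after integrating out $(\la^{x+1},\dots,\la^N)$ in \eqref{eq:general two layer matrix whittaker process}, one is left with the prefix product times a function $\widetilde H_x(\la^x)$, and I would argue that $\widetilde H_x(\la^x)$ depends on $\la^x$ only through the ratio $\la_2^x\star(\la_1^x)^{-1}$. This follows from the translation invariance $\Psi_{\ga}(\la\star y/\mu\star y)=\Psi_{\ga}(\la/\mu)$, which is immediate from \eqref{eq:Baxter Q operator} and cyclicity of the trace, together with the translation invariance of $\abs{\la_2^N(\la_1^N)^{-1}}^v$ and the $\GL_d$-invariance \eqref{eq:GL_d invariance of mu} of $\d\mu$: the change of variables $\la^k\mapsto\la^k\star(\la_1^x)^{-1}$ for $k>x$ leaves the integrand invariant in form and replaces $\la^x$ by $(\id,\la_2^x\star(\la_1^x)^{-1})$. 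The resulting function of a single positive definite matrix is $H_{x,N}$; when the suffix $w_{x+1},\dots,w_N$ is all $\rightarrow$ this matches the definition in the proposition, and in the general case the obvious $\bw$-dependent analogue is identified with the stated $H_{x,N}$ using the symmetry of the skew matrix Whittaker function in its parameters (Lemma~\ref{lem:symmetry of skew whittaker function}). Pointwise finiteness of $H_{x,N}$ is handled by the same estimates as in Proposition~\ref{prop:Finiteness of Partition function}.

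Setting $x=0$ in the resulting marginal reproduces $\P_0$ from \eqref{eq: initial condition of survived inhomogeneous random walk}. For $x\geq 1$, taking the ratio of the marginal at step $x$ to that at step $x-1$ cancels the boundary weight at $\la^0$ and all pair factors except $\Psi_{\ga_x}$, leaving exactly $\P^{\rightarrow}_{x,N}$ or $\P^{\downarrow}_{x,N}$ of \eqref{eq:transition kernel of survived inhomogeneous random walk}, depending on $w_x$. In particular the conditional law of $\la^x$ given $(\la^0,\dots,\la^{x-1})$ depends on the past only through $\la^{x-1}$, which is the Markov property. That these kernels integrate to $1$ is automatic, since it is equivalent to the recursion $H_{x-1,N}(\xi)=\int H_{x,N}(\la_2^x\star(\la_1^x)^{-1})\,\Psi_{\ga_x}(\cdot)\,\d\mu(\la^x)$ built into the definition of $H_{x-1,N}$.

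The main technical obstacle is the translation-invariance reduction: the change of variables on the $N-x$ coordinates must be executed with care, and one must verify $\bw$-independence of the reduced $H_{x,N}$ via Lemma~\ref{lem:symmetry of skew whittaker function}. Once this is in place the identification of $\P_0$ and $\P^{w_x}_{x,N}$ is a direct algebraic manipulation.
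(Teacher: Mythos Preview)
Your overall architecture coincides with the paper's: integrate out the suffix $(\la^{x+1},\dots,\la^N)$, use translation invariance and $\GL_d$-invariance of $\d\mu$ to reduce the resulting function of $\la^x$ to a function of $\la_2^x\star(\la_1^x)^{-1}$, and then read off the kernel as a ratio of consecutive marginals. The paper does exactly this (phrased as a direct computation of the conditional density as a quotient $A/B$).

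There is, however, a genuine gap in your identification of the suffix integral with $H_{x,N}$. You invoke Lemma~\ref{lem:symmetry of skew whittaker function}, but that lemma only permutes the parameters $\ga_i$ within an all-$\rightarrow$ skew Whittaker function $\Psi_{\ga_1,\dots,\ga_k}(\la/\mu)$; it says nothing about replacing a factor $\Psi_{\ga_i}(\la^{i-1}/\la^i)$ (a $\downarrow$ step) by $\Psi_{\ga_i}(\la^i/\la^{i-1})$ (a $\rightarrow$ step). What you need is precisely the Cauchy-type identity (Lemma~\ref{lem:Cauchy identity}) to flip interior $\downarrow\rightarrow$ pairs into $\rightarrow\downarrow$ pairs, together with the Littlewood-type identity (Lemma~\ref{lem:Littlewood identity}, in its right-boundary form) to handle the interaction of the last step with the weight $\abs{\la_2^N(\la_1^N)^{-1}}^v$. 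These are the identities the paper invokes to show that the suffix integral is independent of $w_{x+1},\dots,w_N$ and hence equals $H_{x,N}$ as defined. Once you swap in the correct lemmas, your argument goes through and matches the paper's.
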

\begin{proof}
    By definition, it suffices to prove that: for all $1\leq x \leq N, \ \mu^x, \mu^{x-1},\dots , \mu^0\in\pd^2$, 
    \be\label{eq:check transition kernel}\PP[\la^x=\mu^x|\la^{x-1}=\mu^{x-1},\dots, \la^0=\mu^0]=\PP[\la^x=\mu^x|\la^{x-1}=\mu^{x-1}]=\P^{w_x}_{x,N}(\mu^{x-1},\mu^x).\ee
 
    Indeed, by definition of conditional expectation, the  L.H.S. of  \eqref{eq:check transition kernel} can be written as a ratio $A/B$ where
    \begin{equation*}
        \begin{split}
            A=\delta_{\mu_1^0=\id}
    \abs{\mu_2^0(\mu_1^0)^{-1}}^u&
    \prod\limits_{\substack{1\leq i\leq x\\\p_i-\p_{i-1}=\rightarrow}}\Psi_{\ga_i}(\mu^i/\mu^{i-1})\prod\limits_{\substack{1\leq i\leq x\\\p_i-\p_{i-1}=\downarrow}}    \Psi_{\ga_i}(\mu^{i-1}/\mu^{i})\\
    \int_{(\pd^2)^{N-x}} \prod_{i=x+1}^N\d\mu(  \mu^i)&\prod\limits_{\substack{x+1\leq i\leq N\\\p_i-\p_{i-1}=\rightarrow}}\Psi_{\ga_i}(\mu^i/\mu^{i-1})\prod\limits_{\substack{x+1\leq i\leq N\\\p_i-\p_{i-1}=\downarrow}}
    \Psi_{\ga_i}(\mu^{i-1}/\mu^{i})
    \abs{\mu_2^N(\mu_1^N)^{-1}}^v
    ,
        \end{split}
    \end{equation*}
    
    \begin{equation*}
        \begin{split}
            B=\delta_{\mu_1^0=\id}
    \abs{\mu_2^0(\mu_1^0)^{-1}}^u&
    \prod\limits_{\substack{1\leq i\leq x-1\\\p_i-\p_{i-1}=\rightarrow}}\Psi_{\ga_i}(\mu^i/\mu^{i-1})\prod\limits_{\substack{1\leq i\leq x-1\\\p_i-\p_{i-1}=\downarrow}}    \Psi_{\ga_i}(\mu^{i-1}/\mu^{i})\\
    \int_{(\pd^2)^{N-x+1}} \prod_{i=x}^N\d\mu(  \mu^i) &\prod\limits_{\substack{x\leq i\leq N\\\p_i-\p_{i-1}=\rightarrow}}\Psi_{\ga_i}(\mu^i/\mu^{i-1})\prod\limits_{\substack{x\leq i\leq N\\\p_i-\p_{i-1}=\downarrow}}
    \Psi_{\ga_i}(\mu^{i-1}/\mu^{i})
    \abs{\mu_2^N(\mu_1^N)^{-1}}^v   .
        \end{split}
    \end{equation*}
By the Cauchy/Littlewood type identity  (Lemma \ref{lem:Cauchy identity} and Lemma \ref{lem:Littlewood identity}), the two integrals do not depend on the shape of $\hp$ and thus equal to $H_{x,N}(\mu^x), H_{x-1,N}(\mu^{x-1})$  respectively, the quotient simplifies into 
$$\frac{
\lb\Psi_{\ga_x}(\mu^x/\mu^{x-1})\rb^{\mathds{1}_{\p_x-\p_{x-1}=\rightarrow}}
\lb\Psi_{\ga_x}(\mu^{x-1}/\mu^{x})\rb^{\mathds{1}_{\p_x-\p_{x-1}=\downarrow}}
H_{x,N}(\mu^x_2\star(\mu^x_1)^{-1})}
{H_{x-1,N}(\mu^{x-1}_2\star(\mu^{x-1}_1)^{-1})},
$$
which equals $\P^{w_x}_{x,N}(\mu^{x-1},\mu^x)$. 
\end{proof}

 The skew Whittaker function $\Psi_{\ga_x}(\la^x/\la^{x-1})$ (resp.\ $\Psi_{\ga_x}(\la^{x-1}/\la^{x})$) can be interpreted as the sub-Markovian transition kernel for two independent $\Wis^{\pm}$ random walks $\bl_1, \bl_2$, killed at time $x$ with probability $e^{-\tr[\la_2^{x}(\la_1^{x-1})^{-1}]}$ (resp.\ $e^{-\tr[\la_2^{x-1}(\la_1^{x})^{-1}]}$). The Doob-transformed kernels ~\eqref{eq:transition kernel of survived inhomogeneous random walk}, which are Markovian due to the branching rule~\eqref{eq:branching rule, skew whittaker function}, then correspond to the transition kernels for the walks conditioned to survive up to time~$N$.

\section{Stationary measure for equilibrium systems}\label{sec:Stationary measure for equilibrium systems}
 
In this section, we prove that the $\Wis^{\pm}$ random walk (Definition \ref{def:IW random walk}) with $\mu$-distributed  starting matrix is consistent with the inhomogeneous inverse-Wishart polymer on a strip in the equilibrium regime \eqref{eq:parameters inhomogeneous equilibrium regime}.  We also show that with fixed  starting matrix, the $\Wis^{\pm}$ random walk is locally consistent in a sense that we will make precise.  
 As an application, we derive stationarity results for the model  on the quadrant with boundary conditions by suitably embedding a portion of the quadrant into the strip.
 
\subsection{Equilibrium regime of the inhomogeneous model on a strip}
\label{subsec:Equilibrium regime of the strip model}
In this section, we consider the inhomogeneous inverse-Wishart polymer on a strip (Definition \ref{def:inhomogeneous strip model}) with parameters $\bta=(\ta_i)_{1\leq i\leq N}, u, v$ in the equilibrium regime \eqref{eq:parameters inhomogeneous equilibrium regime}, i.e., for all
$1\leq i \leq N$, 
$$u+v=0,\  \theta_i+u>\frac{d-1}{2}, \ \theta_i+v>\frac{d-1}{2}.$$ 
The proof of the stationarity follows a strategy analogous to that used in the maximal current regime. We first introduce the Gibbs measure associated with a down-right path on the strip, then construct a Markovian dynamics which is consistent with the polymer recurrence, and finally identify this Gibbs measure with the distribution of a simple stochastic process. The main technical difference is that instead of the two-layer Gibbs measure, we will employ a one-layer version here.  Let us begin by defining the corresponding one-layer graph and one-layer Gibbs measure.
 
\begin{definition}[One-layer graph and one-layer Gibbs measure]
 We label the edges of $\strip$ in a way that along any down-right path $\hp=(\p_i)_{0\leq i\leq N}$ on the lattice, the parameters $\bg(\hp)=(\gamma_1,\dots,\gamma_N)$  reads
    \be\label{eq:labels on strip edges equilibrium}\gamma_i= \left\{  \begin{aligned}
\a_n:=\ta_n+v \quad & \text{if } \p_{i-1}=(n-1,m), \p_{i}= (n,m) \\
\b_m:=\ta_m+u \quad & \text{if } \p_{i-1}=(n,m), \p_{i}=(n,m-1) 
\end{aligned}
\right., \quad 1\leq i\leq N.\ee
Note the difference between this labeling rule and the one  \eqref{eq:labels on strip edges maximal current} defined for the model with parameters in the maximal current regime. 

For a given down-right path $\hp$, we define the associated one-layer graph as the first layer of the corresponding two-layer graph $\gp$. With a slight abuse of notation, we also denote this one-layer graph by $\hp$. Formally, $\hp$ is the labeled undirected graph $\hp = (V, E, \bg)$ with: 
\begin{itemize}
    \item \textbf{Vertex set:} $ V = \{ \p_i : 0 \leq i \leq N \}$.
    \item \textbf{Edge set:}
    $E =  \;\{\{\p_{i-1}, \p_{i}\}  : 1 \leq i \leq N\}$.
    \item \textbf{Edge labels:}
    $\bg(\{\p_{i-1}, \p_{i}\}) = \gamma_i$,  where $\gamma_i$ is defined by the labeling rule in~\eqref{eq:labels on strip edges equilibrium} for $1\leq i\leq N$.
\end{itemize}

Recall the definition of Gibbs measure associated with a subgraph $\mathcal{G}\subset\gp$ (Definition \ref{def:two layer Gibbs measure}). In particular, when $\mathcal{G}=\hp$, and up to a finite normalization constant $
\mathcal{Z}^{\hp}$  that we will define in \eqref{eq:partition function equilibrium regime}, the one-layer Gibbs measure $\wt^{\hp}(\d\bl)$ coincides with the  $\Wis^{\pm}$ random walk with $ \mu$-distributed starting matrix:
\be\label{eq:equilibrium stationary measure is wt hp}\frac{1}{\mathcal{Z}^{\hp}}\wt^{\hp}(\bl)\d\mu( \bl)=\d\mu( \la^0)\times \d\bR^{\bg(\hp),\bw(\hp)}_{\la^0} \lb(\la^i)_{1\leq i\leq N}\rb.\ee
 We stress that the one-layer Gibbs measure $\wt^{\hp}$ differs from the first-layer marginal of the two-layer Gibbs measure $\widehat{\wt}^{\hp}$~\eqref{eq:first layer marginal}. 
  Although both are infinite measures on $\pd^{N+1}$, the former is a product measure, while the latter is not, due to interactions between the two layers. 
\end{definition}
We now establish the corresponding one-layer Cauchy/Littlewood type identities and define local dynamics, in parallel with Sections \ref{subsec:Properties of Gibbs measure} and \ref{subsec:Push-block dynamics and local Markov kernels}.
\begin{lemma}[One-layer Cauchy type identity]\label{lem:1-layer Cauchy}
For any $\a,\b\in\RR$ with $\a+\b>\frac{d-1}{2}$ and any $\la,\mu\in\pd$, we have
    \begin{equation}
    \label{eq:1-layer Cauchy}
         \int_{\pd}\wt\lb\raisebox{-20pt}{\begin{tikzpicture}
        \draw[thick] (-0.6,-0.6)--(0,-1.2)--(0.6,-0.6);
        \node[above right] at (0.5,-0.65) {\small  $\mu$};
        \node[above left] at (-0.5,-0.65) {\small  $\la$};
        \node[above] at (0.3,-0.9) {\textcolor{red}{\small  $\a$}};
        \node[above] at (-0.3,-0.95) {\textcolor{red}{\small  $\b$}};
        \node[below] at (0,-1.2) {\small  $\k$};
\end{tikzpicture}}\rb\d\mu( \k)
    =
     \int_{\pd}\wt\lb\raisebox{-20pt}{\begin{tikzpicture}
    \draw[thick] (-0.6,0.6)--(0,1.2)--(0.6,0.6);
    \node[below right] at (0.5,0.65) {\small $\mu$};
    \node[below left] at (-0.45,0.66) {\small  $\la$};
    \node[above] at (0,1.2) {\small  $\pi$};
    \node[above] at (0.35,0.8){\textcolor{red}{\small  $\b$}};
    \node[above] at (-0.35,0.85){\textcolor{red}{\small  $\a$}};
\end{tikzpicture}}\rb\d\mu( \pi).  
    \end{equation}    
\end{lemma}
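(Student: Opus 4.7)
The plan is to evaluate both sides of \eqref{eq:1-layer Cauchy} explicitly using the Laplace-type integral \eqref{eq:laplace transform} and to match them through elementary determinantal identities. Unlike the two-layer Cauchy identity (Lemma \ref{lem:Cauchy identity}), whose proof required the non-trivial change of variable of Lemma \ref{lem:change of variable}, here no change of variable is needed: after integration, both sides reduce to scalar expressions in $\la$ and $\mu$ that can be compared directly.

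First, I would unfold the Boltzmann weights \eqref{eq:solid weight IW} on the left-hand side to obtain
\begin{equation*}
\text{LHS}\eqref{eq:1-layer Cauchy} = \abs{\la}^{-\b}\abs{\mu}^{-\a}\int_{\pd}\abs{\k}^{\a+\b}e^{-\tr[\k(\la^{-1}+\mu^{-1})]}\d\mu(\k),
\end{equation*}
which, by \eqref{eq:laplace transform} and the hypothesis $\a+\b>\frac{d-1}{2}$, equals $\abs{\la}^{-\b}\abs{\mu}^{-\a}\Gamma_d(\a+\b)\abs{\la^{-1}+\mu^{-1}}^{-(\a+\b)}$. Similarly, after unfolding the weights on the right-hand side and applying the involution invariance \eqref{eq:involution invariance of mu} through the substitution $\pi\mapsto\pi^{-1}$, the right-hand side becomes
\begin{equation*}
\text{RHS}\eqref{eq:1-layer Cauchy} = \abs{\la}^{\a}\abs{\mu}^{\b}\int_{\pd}\abs{\pi}^{\a+\b}e^{-\tr[\pi(\la+\mu)]}\d\mu(\pi) = \abs{\la}^{\a}\abs{\mu}^{\b}\Gamma_d(\a+\b)\abs{\la+\mu}^{-(\a+\b)}.
\end{equation*}

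Finally, to match the two expressions I would use the matrix identity $\la^{-1}+\mu^{-1}=\la^{-1}(\la+\mu)\mu^{-1}$, which yields $\abs{\la^{-1}+\mu^{-1}}=\abs{\la}^{-1}\abs{\mu}^{-1}\abs{\la+\mu}$; substituting into the expression for the LHS produces exactly the RHS. There is no real obstacle: the only subtlety, and really the whole reason the argument works in the non-commutative setting, is that $\la^{-1}(\la+\mu)\mu^{-1}=\la^{-1}+\mu^{-1}$ holds identically in $\pd$ for any $d\geq 1$, so once we pass to determinants we are back in the commutative world and no analogue of Lemma \ref{lem:change of variable} is required. This is the one-layer analogue of Lemma \ref{lem:Cauchy identity}, simpler because no $K$-Bessel function appears.
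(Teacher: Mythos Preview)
Your proof is correct and follows essentially the same approach as the paper: both sides are evaluated directly via the integral formula \eqref{eq:laplace transform}, and the resulting determinantal expressions are matched using $\abs{\la^{-1}+\mu^{-1}}=\abs{\la}^{-1}\abs{\mu}^{-1}\abs{\la+\mu}$. The paper's proof is a one-line ``direct computation'' arriving at the common value $\abs{\la}^{\a}\abs{\mu}^{\b}\abs{\la+\mu}^{-(\a+\b)}\Gamma_d(\a+\b)$ (up to a harmless swap of $\a,\b$ in the displayed expression), which is exactly what your more detailed write-up yields.
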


\begin{proof}
Using integral formula \eqref{eq:laplace transform},  both sides equal to
$\frac{\abs{\la}^\b \abs{\mu}^\a}{\abs{\la+\mu}^{\a+\b}}\Gamma_d (\a+\b)
$.
\end{proof}

\begin{lemma}[One-layer Littlewood type identity]\label{lem:1-layer Littlewood}
For any $\a,\b\in \RR$ with $\a,\b>\frac{d-1}{2}$ and any $\la\in\pd$, we have
    \begin{equation}\label{eq:1-layer Littlewood}
    \Gamma_d(\a)\int_{\pd}
\wt\lb\raisebox{-20pt}{\begin{tikzpicture}
    \draw[thick] (0,1.2)--(0.6,0.6);
    \node[below right] at (0.5,0.65) {\small $\la$};
    \node[above] at (0,1.2) {\small  $\pi$};
    \node[above] at (0.35,0.85){\textcolor{red}{\small  $\b$}};
\end{tikzpicture}}\rb\d\mu( \pi)
     = \Gamma_d(\b)
\int_{\pd}\wt\lb\raisebox{-20pt}{\begin{tikzpicture}
        \draw[thick] (0,-1.2)--(0.6,-0.6);
        \node[above right] at (0.5,-0.65) {\small  $\la$};
        \node[above] at (0.3,-0.9) {\textcolor{red}{\small  $\a$}};
        \node[below] at (0,-1.2) {\small  $\k$};
\end{tikzpicture}}\rb\d\mu( \k).
    \end{equation}    
\end{lemma}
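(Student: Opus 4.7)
The plan is to evaluate both sides of \eqref{eq:1-layer Littlewood} directly and show they coincide with $\Gamma_d(\alpha)\Gamma_d(\beta)$. This is a one-edge statement, so unlike the Cauchy-type identity no change of variable of the form $\pi_2^{-1} = W^{-1/2} U^{1/2} W^{-1/2}\cdots$ will be needed; only the basic Laplace integral \eqref{eq:laplace transform} and the involution invariance \eqref{eq:involution invariance of mu} of $\d\mu$.

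First, I would read off the Boltzmann weights from \eqref{eq:solid weight IW}. In the LHS diagram, $\pi$ is the upper vertex (corresponding to $x$) and $\lambda$ the lower one (corresponding to $y$), with label $\beta$, so the weight is $\abs{\pi\lambda^{-1}}^{-\beta} e^{-\tr[\lambda\pi^{-1}]}$. Symmetrically, in the RHS diagram, $\lambda$ is upper ($x$), $\kappa$ is lower ($y$), label $\alpha$, yielding $\abs{\lambda\kappa^{-1}}^{-\alpha} e^{-\tr[\kappa\lambda^{-1}]}$.

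Next, for the LHS I would extract the $\lambda$-dependent determinantal prefactor and apply the involution $\pi\mapsto\pi^{-1}$ (which preserves $\d\mu$ by \eqref{eq:involution invariance of mu}):
\begin{equation*}
\int_{\pd} \abs{\pi}^{-\beta}\abs{\lambda}^{\beta} e^{-\tr[\lambda\pi^{-1}]}\d\mu(\pi)
= \abs{\lambda}^{\beta}\int_{\pd} \abs{\pi'}^{\beta} e^{-\tr[\lambda\pi']}\d\mu(\pi')
= \Gamma_d(\beta),
\end{equation*}
where the last equality uses \eqref{eq:laplace transform} with $S=\lambda$ and the resulting determinantal factors cancel. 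Multiplying by $\Gamma_d(\alpha)$ gives $\Gamma_d(\alpha)\Gamma_d(\beta)$. For the RHS, the integral is already in Laplace form after pulling out $\abs{\lambda}^{-\alpha}$:
\begin{equation*}
\int_{\pd} \abs{\kappa}^{\alpha}\abs{\lambda}^{-\alpha} e^{-\tr[\lambda^{-1}\kappa]}\d\mu(\kappa)
= \abs{\lambda}^{-\alpha}\cdot \Gamma_d(\alpha)\abs{\lambda}^{\alpha} = \Gamma_d(\alpha),
\end{equation*}
again by \eqref{eq:laplace transform}, and multiplying by $\Gamma_d(\beta)$ yields $\Gamma_d(\alpha)\Gamma_d(\beta)$.

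There is essentially no obstacle: the hypotheses $\alpha,\beta > \tfrac{d-1}{2}$ are exactly what is required for both applications of \eqref{eq:laplace transform} to converge and for the d-variate gamma functions to be finite. The only mild subtlety worth flagging is the correct reading of the pictorial weights (which vertex plays the role of $x$ versus $y$ in \eqref{eq:solid weight IW}); the asymmetry between the two sides, manifested by the $\Gamma_d(\alpha)$ versus $\Gamma_d(\beta)$ prefactors, is precisely what compensates the asymmetric roles of the upper and lower vertex in the Boltzmann weight.
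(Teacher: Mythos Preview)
Your proof is correct and follows exactly the same approach as the paper, which simply states that a direct computation shows both sides equal $\Gamma_d(\a)\Gamma_d(\b)$. Your reading of the Boltzmann weights is accurate, and the computations via \eqref{eq:laplace transform} (with the involution step on the LHS) are precisely the details behind that one-line proof.
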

\begin{proof}
A direct computation shows both sides equal to 
$\Gamma_d(a)\Gamma_d (\b)
$.
\end{proof}

\begin{definition}[Local Markov kernels]\label{def: local Markov kernels equilibrium} For $\a, \b\in\RR$ with $\a,\b>\frac{d-1}{2}$, define 
\begin{subequations}\label{eq:local Markov kernels equilibrium}
    \be\label{eq:local operator eq bulk}
\rmUbw\lb \pi| \la,\mu; \a, \b \rb=
\wt\lb\raisebox{-20pt}{\begin{tikzpicture}
    \draw[thick] (-0.6,0.6)--(0,1.2)--(0.6,0.6);
    \node[below right] at (0.5,0.65) {\small $\mu$};
    \node[below left] at (-0.45,0.66) {\small  $\la$};
    \node[above] at (0,1.2) {\small  $\pi$};
    \node[above] at (0.35,0.85){\textcolor{red}{\small  $\a$}};
    \node[above] at (-0.35,0.8){\textcolor{red}{\small  $\b$}};
\end{tikzpicture}}\rb\bigg/
\int_{\pd}\wt\lb\raisebox{-20pt}{\begin{tikzpicture}
        \draw[thick] (-0.6,-0.6)--(0,-1.2)--(0.6,-0.6);
        \node[above right] at (0.5,-0.65) {\small  $\mu$};
        \node[above left] at (-0.5,-0.65) {\small  $\la$};
        \node[above] at (0.3,-0.95) {\textcolor{red}{\small  $\b$}};
        \node[above] at (-0.3,-0.9) {\textcolor{red}{\small  $\a$}};
        \node[below] at (0,-1.2) {\small  $\k$};
\end{tikzpicture}}\rb\d\mu( \k),
\ee

\be\label{eq:local operator eq left boundary}
\rmUlw\lb \pi| \la; \a, \b \rb=\Gamma_d(\a)
\wt\lb\raisebox{-20pt}{\begin{tikzpicture}
    \draw[thick] (0,1.2)--(0.6,0.6);
    \node[below right] at (0.5,0.65) {\small $\la$};
    \node[above] at (0,1.2) {\small  $\pi$};
    \node[above] at (0.35,0.9){\textcolor{red}{\small  $\b$}};
\end{tikzpicture}}\rb\bigg/ \Gamma_d(\b)
\int_{\pd}\wt\lb\raisebox{-20pt}{\begin{tikzpicture}
        \draw[thick] (0,-1.2)--(0.6,-0.6);
        \node[above right] at (0.5,-0.65) {\small  $\la$};
        \node[above] at (0.3,-0.85) {\textcolor{red}{\small  $\a$}};
        \node[below] at (0,-1.2) {\small  $\k$};
\end{tikzpicture}}\rb\d\mu( \k),
\ee

\be\label{eq:local operator eq right boundary}
\rmUrw\lb \pi| \la; \b, \a \rb=\Gamma_d(\b)
\wt\lb\raisebox{-20pt}{\begin{tikzpicture}
        \draw[thick] (0,-1.2)--(0.6,-0.6);
        \node[above right] at (0.5,-0.65) {\small  $\pi$};
        \node[above] at (0.3,-0.85) {\textcolor{red}{\small  $\a$}};
        \node[below] at (0,-1.2) {\small  $\la$};
\end{tikzpicture}}\rb\bigg/ \Gamma_d(\a)
\int_{\pd}\wt\lb\raisebox{-20pt}{\begin{tikzpicture}
    \draw[thick] (0,1.2)--(0.6,0.6);
    \node[below right] at (0.5,0.65) {\small $\k$};
    \node[above] at (0,1.2) {\small  $\la$};
    \node[above] at (0.35,0.9){\textcolor{red}{\small  $\b$}};
\end{tikzpicture}}\rb\d\mu( \k).
\ee
\end{subequations}
Due to the one-layer Cauchy \eqref{eq:1-layer Cauchy}/Littlewood \eqref{eq:1-layer Littlewood}  identities, they are probability kernels. Note that the bulk kernel~\eqref{eq:local operator eq bulk} coincides with the first-layer bulk kernel ~\eqref{eq:first layer dynamics operator bulk}. The assumption $u+v=0$ guarantees the consistency of the parameters, for all $n,m$:
$$\a_n+\b_m=\theta_n+\theta_m.$$
Consequently, the bulk kernel is consistent with the polymer recurrence \eqref{eq:recurrence inhomogeneous strip} in the following sense: under $\rmUbw\lb \pi| \la,\mu; \a, \b \rb$, 
\be\label{eq: one layer local dynamics encodes polymer recurrence} \pi\stackrel{d}{=}W\star(\la+\mu)\ee
for an independent $W\sim\Wisv(\a_n+\b_m)=\Wisv(\ta_n+\ta_m)$.  Similarly, one can verify that kernels $\rmUlw, \rmUrw$ are consistent with the polymer recurrence at the boundaries.
\end{definition}

In parallel with the two-layer case, we now define the push-block dynamics on the one-layer configurations $\pd^{N+1}$, given by products of delta functions and the local Markov kernels \eqref{eq:local Markov kernels equilibrium}.
\begin{definition}[One-layer inverse-Wishart Markov dynamics] \label{def:Markov kernel strip eq IW}
The one-layer inverse-Wishart Markov dynamics refers to a family of Markov kernels $\{ \rmU^{\hp,\widetilde{\hp}} \}$ on state space $\pd^{N+1}$, where $\hp$ and $\widetilde{\hp}$ differs by a single vertex.
If the update happens at some bulk vertex $\p_j$, the kernel is defined to be:
$$\rmU^{\hp,\widetilde{\hp}}\lb\widetilde{\bl}|\bl\rb = \lb
\prod_{i\neq j} \delta_{\widetilde{\lambda}^{i}=\lambda^{i}} \rb\,\cdot\,\rmUbw(\widetilde{\lambda}^{j}|\lambda^{j-1},\lambda^{j+1};\gamma_j,\gamma_{j+1}).
$$
If the updates happens at some left/right boundary vertex, the kernels are defined respectively as: 
\begin{align*}
    \rmU^{\hp,\widetilde{\hp}}\lb\widetilde{\bl}|\bl\rb &= \lb
\prod_{i\neq 0} \delta_{\widetilde{\lambda}^{i}=\lambda^{i}} \rb\,\cdot\,\rmUlw(\widetilde{\lambda}^{0}|\lambda^{1};\gamma_1,\Tilde{\gamma}_1),\\
\rmU^{\hp,\widetilde{\hp}}\lb\widetilde{\bl}|\bl\rb&= \lb
\prod_{i\neq N} \delta_{\widetilde{\lambda}^{i}=\lambda^{i}} \rb\,\cdot\,\rmUrw(\widetilde{\lambda}^{N}|\lambda^{N-1};\gamma_N,\Tilde{\gamma}_N).
\end{align*}
By compositions of the above kernels, the Markov kernel $\rmU^{\hp,\hq}\lb\widetilde{\bl}|\bl\rb$ is well-defined for arbitrary pair of down-right paths $(\hp, \hq)$ with $\hq$ above $\hp$ and does not depend on the choice of intermediate down-right paths by construction. 
Note that instead of one-layer Gibbs measure, it is the normalized one that is consistent with the dynamics. The difference comes from the fact that the local dynamics at boundary \eqref{eq:local operator eq left boundary} and \eqref{eq:local operator eq right boundary} give some multiplicative contribution to the partition functions. Precisely, for all $\bl'\in \pd^{N+1}$ we have 
\be\label{eq:compatibility of local operator with normalized wt IW full space}   \int_{\pd^{N+1}}\rmU^{\hp,\hq}\lb\bl'|\bl\rb\frac{\wt^{\hp}\lb\bl\rb}{\mathcal{Z}^{\hp}}\d\mu( \bl)
    =\frac{\wt^\hq\lb\bl'\rb}{\mathcal{Z}^{\hq}},
\ee
where for any down-right path $\hp$ with labels $\bg(\hp)=\big(\ga_k(\hp)\big)_{1\leq k\leq N}$ given by \eqref{eq:labels on strip edges equilibrium}, the normalization constant  $\mathcal{Z}^{\hp}$  is defined as
\be\label{eq:partition function equilibrium regime}\mathcal{Z}^{\hp}=\int_{\pd^{N}} \wt^{\hp}\lb \bl\rb \prod_{\substack{i\neq 0}}\d\mu( \lambda^i)= \prod_{k=1}^{N} \Gamma_{d}
\big( \ga_k(\hp)\big). \ee
In particular,  $\mathcal{Z}^{\hp}$ is finite and  does not depend on $\la^0\in\pd$. 
\end{definition}

Taking advantage of the dynamics defined above, we prove that the inverse-Wishart random walk with $\mu$-distributed starting point is consistent with the polymer recurrence. 
\begin{theorem}\label{thm: inhomogeneous equilibrium strip}
Let $\bw\in\{\rightarrow,\downarrow\}^N$ be a word of length $N$, and let parameters $\theta_1,\cdots, \ta_N, u, v\in \RR$ satisfy the equilibrium regime \eqref{eq:parameters inhomogeneous equilibrium regime}.
Consider the inverse-Wishart polymer on a strip with bottom $\hp^{\bw}_{\circ}$ (Definition \ref{def:inhomogeneous strip model})  and initial condition given by
    \be\label{eq:ic theorem inhomogeneous equilibriunm strip} \d\mu\big(B_0\big)\times \d\bR^{\bg(\hp^{\bw}_{\circ}),\bw(\hp^{\bw}_{\circ})}_{ B_0} \Big(\big(B_i\big)_{1\leq i \leq N}\Big).\ee
    Then along any down-right path $\hp=(\p_k)_{1\leq k\leq N}$ above $\hp^{\bw}_{\circ}$, the partition function process is distributed as 
     \be\label{eq: partition function along path inhomogeneous equilibriunm strip} \d\mu\big( Z^{\bta,u,v}(\p_0)\big)\times \d\bR^{\bg(\hp),\bw(\hp)}_{ Z^{\bta,u,v}(\p_0)} \lb\big(Z^{\bta,u,v}(\p_i)\big)_{1\leq i \leq N}\rb.\ee
\end{theorem}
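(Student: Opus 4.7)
The plan is to mirror the strategy used in the proof of Theorem \ref{thm: inhomogeneous maximal current strip}, but with the one-layer Gibbs measure $\wt^{\hp}$ in place of the two-layer object $\wt^{\gp}$. The two crucial inputs are: the identification \eqref{eq:equilibrium stationary measure is wt hp} between the normalized one-layer Gibbs measure and the law of the $\Wis^{\pm}$ random walk with $\mu$-distributed starting matrix, and the consistency relation \eqref{eq:compatibility of local operator with normalized wt IW full space}, itself a consequence of the one-layer Cauchy and Littlewood identities (Lemmas \ref{lem:1-layer Cauchy} and \ref{lem:1-layer Littlewood}). The parameter inequalities required by these identities follow automatically from the equilibrium regime \eqref{eq:parameters inhomogeneous equilibrium regime}: one has $\a_n+\b_m=\ta_n+\ta_m>d-1\geq\tfrac{d-1}{2}$ (using $u+v=0$), while $\a_n,\b_m>\tfrac{d-1}{2}$ individually.

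First, I would verify that the one-step local Markov kernels $\rmUbw,\rmUlw,\rmUrw$ of Definition \ref{def: local Markov kernels equilibrium} faithfully encode the polymer recurrence \eqref{eq:recurrence inhomogeneous strip}. For the bulk this is exactly \eqref{eq: one layer local dynamics encodes polymer recurrence}, once one uses $u+v=0$ to identify $\a_n+\b_m=\ta_n+\ta_m$. For the left boundary, reducing the denominator in \eqref{eq:local operator eq left boundary} via the Laplace-type identity \eqref{eq:laplace transform} yields
\[
\rmUlw(\pi|\la;\a,\b)=\frac{1}{\Gamma_d(\b)}\abs{\la\pi^{-1}}^{\b}e^{-\tr[\la\pi^{-1}]},
\]
which is the density of $W\star\la$ for $W\sim\Wisv(\b)$; specializing to $\b=\b_m=\ta_m+u$ matches the boundary rule $Z^{\bta,u,v}(m,m)=W(m,m)\star Z^{\bta,u,v}(m,m-1)$. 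The right boundary is treated symmetrically. Iterating one-step updates from $\hp^{\bw}_{\circ}$ to $\hp$, the composite kernel $\rmU^{\hp^{\bw}_{\circ},\hp}$ therefore coincides with the conditional law of the partition function process along $\hp$ given the process along $\hp^{\bw}_{\circ}$.

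Since the initial condition \eqref{eq:ic theorem inhomogeneous equilibriunm strip} equals $\frac{1}{\mathcal{Z}^{\hp^{\bw}_{\circ}}}\wt^{\hp^{\bw}_{\circ}}(\bl)\,\d\mu(\bl)$ by \eqref{eq:equilibrium stationary measure is wt hp}, applying the consistency relation \eqref{eq:compatibility of local operator with normalized wt IW full space} together with Tonelli's theorem (justified by the finiteness \eqref{eq:partition function equilibrium regime} of $\mathcal{Z}^{\hp}$) transports this law to $\frac{1}{\mathcal{Z}^{\hp}}\wt^{\hp}(\bl')\,\d\mu(\bl')$ along $\hp$, which is exactly \eqref{eq: partition function along path inhomogeneous equilibriunm strip} by a second use of \eqref{eq:equilibrium stationary measure is wt hp}.

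The main obstacle, compared with the proof of Theorem \ref{thm: inhomogeneous maximal current strip}, is the careful bookkeeping of the normalization constants $\mathcal{Z}^{\hp}=\prod_{k=1}^{N}\Gamma_d(\gamma_k(\hp))$, which now genuinely depend on $\hp$: the Markov kernel intertwines two different $\sigma$-finite measures. The multiplicative factors $\Gamma_d(\a)/\Gamma_d(\b)$ built into the boundary kernels \eqref{eq:local operator eq left boundary} and \eqref{eq:local operator eq right boundary} are calibrated precisely to absorb the change in normalization when the two differing edge labels at a boundary update are swapped; in the bulk, the two affected edges simply exchange labels, so the product of $\Gamma_d$ factors is preserved. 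Once this bookkeeping is in place, the remainder of the argument is a verbatim one-layer analogue of the proof of Theorem \ref{thm: inhomogeneous maximal current strip}.
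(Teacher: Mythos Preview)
Your proposal is correct and follows essentially the same approach as the paper: identify the initial condition with the normalized one-layer Gibbs measure via \eqref{eq:equilibrium stationary measure is wt hp}, invoke the consistency relation \eqref{eq:compatibility of local operator with normalized wt IW full space}, and identify the result along $\hp$ with the desired $\Wis^{\pm}$ random walk. You supply more detail than the paper's terse proof---in particular the explicit verification that the boundary kernels encode the polymer recurrence and the bookkeeping of how the $\Gamma_d$ factors in \eqref{eq:local operator eq left boundary}--\eqref{eq:local operator eq right boundary} absorb the change in $\mathcal{Z}^{\hp}$---but the logical skeleton is identical.
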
 
\begin{proof}
As remarked in \eqref{eq:equilibrium stationary measure is wt hp}, the initial condition \eqref{eq:ic theorem inhomogeneous equilibriunm strip} coincides with the normalized one-layer Gibbs measure 
$$ \frac{1}{\mathcal{Z}^{\hp^{\bw}_{\circ}}}\wt^{\hp^{\bw}_{\circ}}(\bB)\d\mu( \bB)$$ 
which is consistent with the polymer recurrence \eqref{eq:compatibility of local operator with normalized wt IW full space}. 
Therefore, the partition function along any down-right path is distributed as the associated normalized one-layer Gibbs measure, which is in turn a $\Wis^{\pm}$ random walk with $\mu$-distributed starting matrix \eqref{eq: partition function along path inhomogeneous equilibriunm strip}.
\end{proof}

\begin{proof}[Proof of Theorem \ref{thm:homogeneous equilibrium strip}] 
When $\bta=(\ta)^N$, consider the model on a strip with horizontal bottom boundary $\bw(\hp^{\bw}_{\circ})=\lb\rightarrow\rb^N$ and initial condition $\bl_1=(\la^j_1)_{0\leq j \leq N}$ given by  \eqref{eq:stationary measure for the homogneous model on a strip}. By labeling rule \eqref{eq:labels on strip edges equilibrium}, 
 we have  $\bg\lb\tau_k\hp^{\bw}_{\circ}\rb=(\ta-u)^N$ and $\bw\lb\tau_k\hp^{\bw}_{\circ}\rb=(\rightarrow)^N$ for all $k\geq 0$.  By Theorem \ref{thm: inhomogeneous equilibrium strip},  along down-right paths  $\tau_k\hp^{\bw}_{\circ}$, the partition function processes have the same distribution. In other words, 
the $\sigma$-finite measure \eqref{eq:stationary measure for the homogneous model on a strip}
is a stationary measure in the sense precised in Definition \ref{def:homogeneous strip model}.
\end{proof}
Taking advantage of the dynamics introduced above, we now prove that the probability measure given by inverse-Wishart random walk with fixed starting matrix is locally consistent with the polymer recurrence in the following sense.  
 
\begin{proposition}\label{prop:finite stationary measure for equilibrium strip}
Let  $N\in\NN$ and  $\theta_1,\cdots, \ta_N, u, v\in \RR$ be parameters satisfying the equilibrium regime \eqref{eq:parameters inhomogeneous equilibrium regime}. 
    Let   $\bw\in\{\rightarrow,\downarrow\}^N$ be a word of length $N$ and    $S\in\pd$ be some fixed matrix.  
Consider the inverse-Wishart polymer on a strip with bottom boundary $\hp^{\bw}_{\circ}$ (Definition \ref{def:inhomogeneous strip model}) and  initial condition given by 
    \be\label{eq:ic theorem delta inhomogeneous equilibriunm strip}
    \big( B_k \big)_{0\leq k\leq N}\sim
    \delta_{S} \times \d\bR^{\bg(\hp^{\bw}_{\circ}),\bw(\hp^{\bw}_{\circ})}_{ S}  .\ee
    Then along any down-right path $\hq=(\p_k)_{1\leq k\leq N}$ above $\hp^{\bw}_{\circ}$ with starting point $\p_0=(0,0)$, the partition function process is distributed as 
     \be\label{eq: delta partition function along path inhomogeneous equilibriunm strip}  
     \Big( Z^{\bta,u,v}(\p_k)\Big)_{0\leq k\leq N}\sim 
     \delta_{S} \times \d\bR^{\bg(\hp),\bw(\hp)}_{ S}  .\ee
\end{proposition}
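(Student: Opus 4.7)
The plan is to mirror the proof of Theorem \ref{thm: inhomogeneous equilibrium strip}, accounting for the delta-function starting matrix. The key observation is that the $\sigma$-finite identity \eqref{eq:equilibrium stationary measure is wt hp} can be read as a disintegration along $\la^0$: for any down-right path $\hp$,
\begin{equation*}
\tfrac{1}{\mathcal{Z}^{\hp}}\wt^{\hp}(\bl)\d\mu(\bl) = \d\mu(\la^0)\times \d\bR^{\bg(\hp),\bw(\hp)}_{\la^0}(\hat{\bl}^{(0)}),
\end{equation*}
so that conditioning on $\la^0 = S$ gives the probability measure $\d\bR^{\bg(\hp),\bw(\hp)}_S$ on $\pd^N$. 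In particular, the initial condition \eqref{eq:ic theorem delta inhomogeneous equilibriunm strip} is precisely the fiber over $\la^0 = S$ of the normalized one-layer Gibbs measure associated with $\hp^{\bw}_{\circ}$.

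Next, I would argue that the composed push-block kernel $\rmU^{\hp^{\bw}_{\circ},\hq}(\bl'\mid \bl)$ leaves the coordinate at position $0$ intact. Since $\hp^{\bw}_{\circ}$ and $\hq$ share the starting vertex $\p_0 = (0,0)$, one can transform one into the other via a sequence of single-vertex updates whose intermediate down-right paths all start at $(0,0)$. Such updates only invoke the bulk kernel $\rmUbw$ and the right-boundary kernel $\rmUrw$; the left-boundary kernel $\rmUlw$, the only one-step operator capable of modifying $\la^0$, is never needed. It follows that $\rmU^{\hp^{\bw}_{\circ},\hq}(\bl'\mid\bl)$ factorizes as $\delta(\la'^0 - \la^0)$ times a Markov kernel on $\pd^N$ parametrized by the common value $\la^0$.

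Combining these two ingredients with the consistency \eqref{eq:compatibility of local operator with normalized wt IW full space}, disintegrating along $\la^0 = S$ yields that, for each fixed $S$, the image of $\d\bR^{\bg(\hp^{\bw}_{\circ}),\bw(\hp^{\bw}_{\circ})}_S$ under the (disintegrated) push-block dynamics is exactly $\d\bR^{\bg(\hq),\bw(\hq)}_S$. Since by construction the local kernels encode the polymer recurrence (see the discussion following \eqref{eq: one layer local dynamics encodes polymer recurrence}), this precisely establishes the claimed distributional identity \eqref{eq: delta partition function along path inhomogeneous equilibriunm strip}.

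The main technical ingredient is the combinatorial claim underlying the second step: any down-right path $\hq$ above $\hp^{\bw}_{\circ}$ sharing the starting vertex $(0,0)$ can be reached via single-vertex updates whose intermediate paths all preserve that starting vertex. I expect this to follow by induction on the discrete area between the two paths, each step being a standard convex-to-concave corner swap performed at an interior or right-boundary vertex, which by construction keeps $\p_0$ fixed.
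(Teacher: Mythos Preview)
Your proposal is correct and follows essentially the same approach as the paper: identify the initial condition as the one-layer Gibbs measure with $\la^0$ pinned at $S$, observe that any down-right path $\hq$ sharing the starting vertex $(0,0)$ can be reached using only bulk and right-boundary kernels (so the left-boundary kernel, which alone could move $\la^0$, is never invoked), and then apply the consistency \eqref{eq:compatibility of local operator with normalized wt IW full space} restricted to this fixed fiber. The paper's proof is terser and does not spell out the combinatorial corner-swap induction or the disintegration language, but the logic is identical.
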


\begin{proof}
The initial condition \eqref{eq:ic theorem delta inhomogeneous equilibriunm strip} coincides with the probability measure  given by  the product of delta function at $S$  and the normalized one-layer Gibbs measure:
$$ \delta_{S}( B_0) \times
 \frac{1}{\mathcal{Z}^{\hp^{\bw}_{\circ}} } 
\wt^{\hp^{\bw}_{\circ}}\lb\lb B_k \rb_{1\leq k\leq N}\rb \prod_{k=1}^N \d\mu( B_k).$$ 
Recall that the Markovian kernels defined in Definition~\ref{def: local Markov kernels equilibrium} are given by products of a local Markov kernel and delta functions. Therefore, once the reference point is fixed at the left boundary, the probabilistic component of the  measure above remains consistent with both the bulk kernels~\eqref{eq:local operator eq bulk} and the right boundary kernels~\eqref{eq:local operator eq right boundary}.  That is, \eqref{eq:compatibility of local operator with normalized wt IW full space} holds for $\hp=\hp^{\bw}_{\circ}$ and all down-right paths $\hq$ lying above $\hp^{\bw}_{\circ}$ that share the same starting point. It follows that the partition functions along such down-right paths $\hq$ are distributed according to the corresponding normalized one-layer Gibbs measure, which coincides with the $\Wis^{\pm}$ random walk starting from $S$ \eqref{eq: delta partition function along path inhomogeneous equilibriunm strip}. 
\end{proof}

\subsection{Application to the inhomogeneous model on the quadrant}\label{subsec:Application to inhomogeneous quadrant model}
In this section, we use the stationarity results for the model on a strip in the equilibrium regime (Theorem~\ref{thm: inhomogeneous equilibrium strip} and Proposition~\ref{prop:finite stationary measure for equilibrium strip}) to derive corresponding stationarity properties for the inhomogeneous model on the quadrant with boundary conditions. The main idea is to embed a portion of the quadrant into a suitable strip, then apply the previously constructed Markovian dynamics. In this construction, only the bulk Markov kernels~\eqref{eq:local operator eq bulk} are needed.

\begin{definition}[Inhomogeneous model with boundary conditions]\label{def:inhomogeneous quadrant model with bc}
Let $\ba=(\a_n)_{n\geq 1}$, $\bb=(\b_n)_{n\geq 1}$ be two families of  parameters such that for all $n\geq 1$, $\a_n, \b_n> \frac{d-1}{2}$. Consider a family of independent inverse-Wishart random variables: for all $n,m\geq 1$, 
\begin{align*}
W(n,m)   &\sim \Wisv(\a_n+\b_m).
\end{align*}
 Let $\bB=\lb B_k\rb_{k\in\ZZ}$ be an independent stochastic process on $\pd^{\ZZ}$. The partition functions of inverse-Wishart polymer $\lb Z^{\ba,\bb} (n,m)\rb_{n,m\geq 0}$ are given by the boundary conditions 
$$Z^{\ba,\bb}(\mathbf{b}_k)=B_k, \quad k\in\ZZ, $$ 
and the recurrence: 
\be\label{eq:partition function recurrence inhomogeneous quadrant} Z^{\ba,\bb} (n,m) =W(n,m)\star \lb Z^{\ba,\bb} (n-1,m)+Z^{\ba,\bb} (n,m-1)\rb,\quad n,m\geq 1.\ee
 
\end{definition}

Let us prove an auxiliary lemma which  says that with $\mu$-distributed starting matrix, the Wishart random walk becomes the inverse-Wishart one up to changing the reference point in the following sense.  

\begin{lemma}\label{lem:arbitrary starting point} 
    Let $N\geq 0 $ and $\bg=(\ga_k)_{1\leq k\leq N}$ be parameters such that  $\ga_k>\frac{d-1}{2}$ for each $k$. We have equality between $\sigma$-finite measures on $\pd^{N+1}$:
    $$\d\mu( S_0 )\times\d\bR^{\bg,\rightarrow}_{S_0}
    (S_1,\dots,S_N)= 
    \d\mu( S_N )\times\d\bR^{\bg, \downarrow}_{S_N}
    (S_{N-1},\dots, S_0).$$
\end{lemma}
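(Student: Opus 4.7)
The plan is to verify the asserted equality of $\sigma$-finite measures by expanding each side as a density against the common reference measure $\prod_{i=0}^N \d\mu(S_i)$ and showing the two densities agree pointwise. By Definition \ref{def:IW random walk}, the left-hand side has density $\prod_{k=1}^N \P^{-}_{\gamma_k}(S_k \star S_{k-1}^{-1})$. Unfolding the right-hand side analogously, with the convention that the parameter $\gamma_k$ labels the edge between $S_{k-1}$ and $S_k$ (matching the convention on the left), gives the density $\prod_{k=1}^N \P^{+}_{\gamma_k}(S_{k-1} \star S_k^{-1})$. The claim thus reduces to the pointwise identity
\[
\P^{-}_{\gamma}(S_k \star S_{k-1}^{-1}) = \P^{+}_{\gamma}(S_{k-1} \star S_k^{-1})
\]
for each adjacent pair and each parameter $\gamma > \frac{d-1}{2}$.

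To establish this pointwise identity, I will use two elementary facts. First, by construction $\P^{+}_{\theta}(x) = \P^{-}_{\theta}(x^{-1})$, so the right-hand side equals $\P^{-}_{\gamma}\bigl(S_{k-1}^{-1} \star S_k\bigr)$ after computing $(S_{k-1} \star S_k^{-1})^{-1} = S_k^{1/2} S_{k-1}^{-1} S_k^{1/2} = S_{k-1}^{-1} \star S_k$. Second, the inverse-Wishart density $\P^{-}_{\theta}(x) = \Gamma_d(\theta)^{-1} \abs{x^{-1}}^\theta e^{-\tr[x^{-1}]}$ depends on $x$ only through the conjugation-invariant quantities $\abs{x}$ and $\tr[x^{-1}]$. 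The two matrices $S_k \star S_{k-1}^{-1}$ and $S_{k-1}^{-1} \star S_k$ are both similar to the product $S_{k-1}^{-1} S_k$ (via conjugation by $S_{k-1}^{-1/2}$ and $S_k^{1/2}$ respectively), so they share the same spectral invariants and evaluate identically under $\P^{-}_{\gamma}$.

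Multiplying these pointwise identities across $k = 1, \dots, N$ and noting that $\prod_{i=0}^N \d\mu(S_i)$ is symmetric in the factors $S_i$ then yields the equality of measures. The computation is entirely elementary and I anticipate no substantive obstacle. The only mild subtlety is the conjugacy-based identity between $\P^{-}_{\gamma}(S_k \star S_{k-1}^{-1})$ and $\P^{-}_{\gamma}(S_{k-1}^{-1} \star S_k)$, which one could alternatively verify by a direct check that both $\star$-products share the same determinant $\abs{S_k}/\abs{S_{k-1}}$ and the same inverse trace $\tr[S_k^{-1} S_{k-1}]$.
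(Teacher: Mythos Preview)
Your proof is correct and follows essentially the same route as the paper's. Both arguments compare densities against the common reference measure $\prod_{i=0}^N \d\mu(S_i)$ and reduce the claim to the pointwise identity $\P^{+}_{\theta}(y\star x^{-1})=\P^{-}_{\theta}(x\star y^{-1})$; the paper simply asserts this identity, whereas you justify it via the conjugacy-invariance of determinant and trace, which is a welcome bit of extra detail.
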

 
\begin{proof}
    The $\sigma$-finite reference measures on both sides equal  $\prod_{0\leq k\leq N}\d\mu(S_k)$. The density terms match due to the properties of the density of the Wishart and inverse-Wishart distributions: for all $\ta>\frac{d-1}{2}$ and $x,y\in\pd$, 
    $$\P^{+}_{\ta}(y\star x^{-1})=\P^{-}_{\ta}(y^{-1}\star x)=\P^{-}_{\ta}(x\star y^{-1}).$$
\end{proof}
As a corollary, we have the following property for the  stationary boundary conditions $\bB^{\ta,u}_{-M,\mu}$ defined for the homogeneous model on the quadrant (Definition \ref{def:Stationary boundary conditions}).
\begin{corollary}\label{cor:stationary bc reference point}
    The stationary boundary conditions  with $\mu$-distributed reference matrix  does not depend on the reference point, i.e. for all $M\geq 0$, we have equality between $\sigma$-finite measure on $\pd^{\ZZ}$:
    $$\bB^{\ta,u}_{-M,\mu}=\bB^{\ta,u}_{0,\mu}.$$
\end{corollary}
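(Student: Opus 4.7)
The plan is to compare the two $\sigma$-finite measures $\bB^{\ta,u}_{-M,\mu}$ and $\bB^{\ta,u}_{0,\mu}$ by decomposing each as a product of a marginal and two conditional kernels corresponding to three segments of the boundary: the upper-left tail $(B_k)_{k\leq -M}$, the middle segment $(B_{-M}, B_{-M+1},\ldots,B_0)$, and the right tail $(B_k)_{k\geq 0}$. In both measures, the conditional law of the upper-left tail given $B_{-M}$ is determined by an independent sequence of $\Wisv(\ta+u)$ increments (applied iteratively to $B_{-M}$ in the direction of decreasing $k$), and the conditional law of the right tail given $B_0$ is determined by an independent sequence of $\Wisv(\ta-u)$ increments. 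These conditional kernels depend only on the parameters $\ta, u$ and the respective pivot matrices, and in particular not on which index is the reference point. Hence the whole equality reduces to comparing the $\sigma$-finite marginal laws of the middle segment under the two descriptions.

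Next, I would rewrite each middle-segment marginal in the form $\d\mu\times\d\bR$ so that Lemma~\ref{lem:arbitrary starting point} applies directly. Under $\bB^{\ta,u}_{-M,\mu}$, the middle segment is the $M$-step (forward) Wishart random walk with parameter $\ta+u$ starting from the $\mu$-distributed matrix $B_{-M}$, namely
\[
\d\mu(B_{-M})\times \d\bR^{(\ta+u)^M,(\downarrow)^M}_{B_{-M}}(B_{-M+1},\ldots,B_0).
\]
Under $\bB^{\ta,u}_{0,\mu}$, after relabeling $S_j:=B_{-j}$ for $0\leq j\leq M$, the increments $B_{k-1}\star B_k^{-1}\sim \Wisv(\ta+u)$ for $-M<k\leq 0$ translate into $S_j\star S_{j-1}^{-1}\sim \Wisv(\ta+u)$ for $1\leq j\leq M$, so the middle segment has law
\[
\d\mu(S_0)\times \d\bR^{(\ta+u)^M,(\rightarrow)^M}_{S_0}(S_1,\ldots,S_M).
\]
Applying Lemma~\ref{lem:arbitrary starting point} with $\bg=(\ta+u)^M$ and $N=M$ identifies these two $\sigma$-finite measures on $\pd^{M+1}$, and combining with the matching conditional kernels on the two tails gives the equality $\bB^{\ta,u}_{-M,\mu}=\bB^{\ta,u}_{0,\mu}$ on $\pd^{\ZZ}$.

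The only step requiring care is the bookkeeping of indices under the reversal $S_j=B_{-j}$: one must verify that the forward Wishart increments along $(B_{-M},\ldots,B_0)$ correspond precisely to forward inverse-Wishart increments along $(S_0,\ldots,S_M)$, which is exactly what Lemma~\ref{lem:arbitrary starting point} asserts (the constancy of the parameter sequence $\bg=(\ta+u)^M$ means that the reversal of labels inside $\bg$ plays no role). Beyond this purely notational check, no further estimate or construction is needed.
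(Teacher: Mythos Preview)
Your proposal is correct and follows essentially the same route as the paper: both reduce the comparison to the middle segment $(B_k)_{-M\leq k\leq 0}$ by noting that the conditional laws of the two tails given their respective pivot matrices coincide, and then invoke Lemma~\ref{lem:arbitrary starting point} with $\bg=(\ta+u)^M$. Your version is simply more explicit about the index relabeling $S_j=B_{-j}$ and the identification of the two middle-segment laws with the two sides of that lemma.
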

\begin{proof}
By the definition of stationary boundary conditions, the marginal distributions of both measures on $(B_k)_{k < -M}$ and $(B_k)_{k > 0}$ coincide with those of inverse-Wishart random walks starting from $B_{-M}$ and $B_0$, respectively: 
$$(B_{k})_{k<-M}\sim \d\bR^{(\ta+u)^{\NN}, \rightarrow}_{B_{-M}},\qquad
(B_{k})_{k>0}\sim \d\bR^{(\ta-u)^{\NN}, \rightarrow}_{B_{0}}.$$
It therefore suffices to show that their marginals on $(B_k)_{-M \leq k \leq 0}$ are the same. This follows from Lemma~\ref{lem:arbitrary starting point} in the special case $\bg = (\theta+u)^M$. 
\end{proof}

 We are now ready to prove the main goal of this section.
 
\begin{theorem}\label{thm:inhomogeneous quadrant}
Consider the inhomogeneous model on the quadrant with boundary conditions given by  two-sided inverse-Wishart random walk with $\mu$-distributed reference matrix,
\be\label{eq: stationary boundary conditions for the inhomogeneous model on the quadrant}
\d\mu(B_0)\times \d\bR^{\ba,\bb}_{B_0}(\hat{\bB}^{(0)}).\ee
 Then along any down-right path $\hp=(\p_k)_{0\leq k\leq N}$, the joint law of the partition functions $\lb Z^{\ba,\bb}(\p_k)\rb_{0\leq k\leq N}$ is a $\Wis^{\pm}$ random walk with $\mu$-distributed starting matrix, i.e. the $\sigma$-finite measure
 \be\label{eq: desired rw for the inhomogeneous model}
 \d\mu\Big( Z^{\ba,\bb}(\p_0)\Big)\times \d\bR^{\bg(\hp),\bw(\hp)}_{ Z^{\ba,\bb}(\p_0)} \Big((Z^{\ba,\bb}(\p_i))_{1\leq i \leq N}\Big),\ee
where the labels along the path $\bg(\hp)=(\ga_k)_{1\leq k\leq N}$ read   
        \begin{equation}\label{eq:label rules for inhomogeneous quadrant}
            \gamma_i= \left\{  \begin{aligned}
\a_n  \quad & \text{if } \p_{i-1}=(n-1,m), \p_{i}= (n,m) \\
\b_m \quad & \text{if } \p_{i-1}=(n,m), \p_{i}=(n,m-1) 
\end{aligned}
\right., \quad 1\leq i\leq N.
        \end{equation}
\end{theorem}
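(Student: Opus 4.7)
The plan is to reduce the claim to the strip case already established in Theorem~\ref{thm: inhomogeneous equilibrium strip}, by embedding a finite portion of the quadrant into an equilibrium-regime strip. Given the down-right path $\hp=(\p_k)_{0\leq k\leq N}$ with $\p_0=(0,M)$ on the left boundary and $\p_N=(K,0)$ on the bottom boundary, so that $N=M+K$, I would introduce the strip $\strip$ with parameters $u=v=0$ (trivially in the equilibrium regime $u+v=0$) and periodic sequence defined by $\theta_n=\a_n$ for $1\leq n\leq K$ and $\theta_{K+n}=\b_n$ for $1\leq n\leq M$. The equilibrium inequalities $\theta_i\pm u>\tfrac{d-1}{2}$ collapse to $\theta_i>\tfrac{d-1}{2}$, which holds by the standing assumption on $\ba,\bb$. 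I then embed the quadrant into the strip via the translation $(n,m)_{\text{quad}}\mapsto(n,m-M)_{\text{strip}}$: under this map, the L-shaped boundary piece $(0,M)\to(0,0)\to(K,0)$ is carried exactly onto the strip's canonical bottom path $\hp^{\bw}_{\circ}$ with $\bw=(\downarrow)^M(\rightarrow)^K$, while $\hp$ itself becomes a down-right strip path above $\hp^{\bw}_{\circ}$.

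A direct check using the labeling rules \eqref{eq:labels on strip edges equilibrium} and \eqref{eq:label rules for inhomogeneous quadrant}, together with the periodic definition of $\theta$, shows that the edge labels $\bg(\hp)$ agree on both sides, and that the strip bulk weights $\Wisv(\theta_n+\theta_m)$ coincide with the quadrant weights $\Wisv(\a_n+\b_m)$ at every interior point of the rectangle $[0,K]\times[0,M]$. This rectangle meets the strip's diagonal boundaries only at the two endpoints of $\hp^{\bw}_{\circ}$, so the recurrence invoked above $\hp^{\bw}_{\circ}$ is exclusively the strip's bulk recurrence, which is identical to the quadrant recurrence~\eqref{eq:partition function recurrence inhomogeneous quadrant}.

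The crux of the argument is to identify the quadrant's boundary law, restricted to $-M\leq k\leq K$, with the strip's stationary initial condition on $\hp^{\bw}_{\circ}$ as $\sigma$-finite measures. The horizontal arm ($k\geq 0$) matches immediately, since both prescribe an inverse-Wishart walk from $B_0$ with labels $(\a_k)_{k=1}^{K}$. On the vertical arm ($k\leq 0$) the quadrant specifies an inverse-Wishart walk from $B_0$ outward to $B_{-M}$, whereas the strip's initial condition requires a Wishart walk emanating from $B_{-M}$, which must serve as the strip's $\mu$-distributed starting matrix at $\p_0^{\bw}=(0,0)_{\text{strip}}$. I would apply Lemma~\ref{lem:arbitrary starting point} with $\bg=(\b_1,\dots,\b_M)$ to carry out precisely this direction-reversal, rewriting $\d\mu(B_0)\times\d\bR^{\bg,\rightarrow}_{B_0}$ as $\d\mu(B_{-M})\times\d\bR^{\bg,\downarrow}_{B_{-M}}$ as $\sigma$-finite measures on $\pd^{M+1}$; combined with the unchanged horizontal arm, this reproduces the strip's stationary initial condition on $\hp^{\bw}_{\circ}$. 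Theorem~\ref{thm: inhomogeneous equilibrium strip} applied to the strip then yields the joint law along $\hp$ as a $\Wis^{\pm}$ random walk with $\mu$-distributed starting matrix and the labels prescribed by \eqref{eq:labels on strip edges equilibrium}, which, transported back to quadrant coordinates, matches \eqref{eq: desired rw for the inhomogeneous model} with the labels~\eqref{eq:label rules for inhomogeneous quadrant}. The most delicate bookkeeping will be this direction-reversal on the vertical arm; Lemma~\ref{lem:arbitrary starting point} is tailor-made for this step, trading an inverse-Wishart factor on each edge for a Wishart factor with the same parameter via the density identity $P^{+}_\theta(y\star x^{-1})=P^{-}_\theta(x\star y^{-1})$.
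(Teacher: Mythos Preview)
Your proposal is correct and takes essentially the same approach as the paper: embed the rectangle $[0,K]\times[0,M]$ into an equilibrium-regime strip with bottom shape $(\downarrow)^M(\rightarrow)^K$, identify the quadrant boundary law with the strip's stationary initial condition via Lemma~\ref{lem:arbitrary starting point}, and invoke Theorem~\ref{thm: inhomogeneous equilibrium strip}. The only cosmetic difference is that the paper introduces a free parameter $u\in[0,\tfrac12(\min_i\a_i\wedge\min_j\b_j)-\tfrac{d-1}{4})$ with $v=-u$, setting $\theta_i=\a_i+u$ and $\theta_{n+j}=\b_j-u$, whereas you take $u=v=0$ directly; since $u=0$ is a legitimate choice and nothing in the argument uses the extra freedom, your simplification is harmless.
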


\begin{proof} 
\begin{figure}
\begin{tikzpicture}[scale=0.87]
 \begin{scope}
    \fill[blue!10] (3,1) rectangle (7,3);
  \end{scope}
\draw[gray] (0,0)--(6,0);
\draw[gray] (1,1)--(3,1);
\draw[gray] (2,2)--(3,2);

\draw[gray] (1,-0.5)--(1,1);
\draw[gray] (2,-0.5)--(2,2);
\draw[gray] (3,-0.5)--(3,3);
\draw[gray] (4,-0.5)--(4,1);
\draw[gray] (5,-0.5)--(5,1);
\draw[gray] (6,-0.5)--(6,1);
\draw (5.5,-0.5)--(10.5,4.5);
\draw (-0.5,-0.5)--(4.5,4.5);

\draw   (6,0)--(10.5,4.5);
\draw   (3,2)--(8,2);
\draw   (3,3)--(9,3);   
\draw   (4,4)--(10,4);

\draw   (4,1)--(4,4);
\draw   (5,1)--(5,4.5);
\draw   (6,1)--(6,4.5);
\draw   (7,1)--(7,4.5); 
\draw   (8,2)--(8,4.5); 
\draw   (9,3)--(9,4.5);
\draw   (10,4)--(10,4.5);
\draw[dotted]  (1.5,-0.5)--(6.5,4.5);
\draw[dotted]  (7.5,-0.5)--(12.5,4.5);

\draw[dotted]  (6,0)--(8,0);
\draw[dotted]  (7,1)--(9,1);
\draw[dotted]  (8,2)--(10,2);
\draw[dotted]  (9,3)--(11,3);   
\draw[dotted]  (10,4)--(12,4); 
\draw[dotted]  (7,-0.5)--(7,4.5);
\draw[dotted]  (8,0)--(8,4.5); 
\draw[dotted]  (9,1)--(9,4.5);
\draw[dotted]  (10,2)--(10,4.5);
\draw[dotted]  (11,3)--(11,4.5);
\draw[dotted]  (12,4)--(12,4.5);
\draw[dotted]  (3,4)--(4,4);
\draw[dotted]  (10,2)--(12,2);
\draw[dotted]  (11,3)--(12,3);
\draw[dotted]  (10,1)--(10,2);
\draw[dotted]  (11,1)--(11,3);
\draw[dotted]  (12,1)--(12,4);

\draw[very thick] (3,3)--(3,1)--(7,1);
\fill (3,3) circle(0.08);
\node[left] at (3,3) {\small $(0,m)=\p_0$};
\fill (3,2) circle(0.08);
\fill (3,1) circle(0.08);
\node[below] at (3,1) {\small $(0,0)$};
\fill (4,1) circle(0.08);
\fill (5,1) circle(0.08);
\fill (6,1) circle(0.08);
\fill (7,1) circle(0.08);
\node[below right] at (7,1) {\small $\p_N=(n,0)$};

\draw[very thick,orange] (3,3)--(5,3)--(5,2)--(7,2)--(7,1);
\fill (3,2) circle(0.08);
\fill (3,1) circle(0.08);
\fill (4,1) circle(0.08);
\fill (5,1) circle(0.08);
\fill (6,1) circle(0.08);
\fill (7,1) circle(0.08);

\fill[blue!65] (4,3) circle(0.08);
\fill[blue!65] (4,2) circle(0.08);
\fill[blue!65] (5,3) circle(0.08);
\fill[blue!65] (5,2) circle(0.08);
\fill[blue!65] (6,3) circle(0.08);
\fill[blue!65] (6,2) circle(0.08);
\fill[blue!65] (7,3) circle(0.08);
\fill[blue!65] (7,2) circle(0.08);
\draw[->,thick] (3,3) -- (3,4.5);
\draw[->,thick] (7,1) -- (12.5,1);
\fill (3,4) circle(0.06);
\fill (8,1) circle(0.06);
\fill (9,1) circle(0.06);
\fill (10,1) circle(0.06);
\fill (11,1) circle(0.06);
\fill (12,1) circle(0.06);
\node at (3.5,1) {\textcolor{red}{\small $\a_1$}};
\node at (4.5,1) {\textcolor{red}{\small $\a_2$}};
\node at (5.5,1) {\textcolor{red}{\small $\a_3$}};
\node at (6.5,1) {\textcolor{red}{\small $\a_4$}};

\node at (3.5,2) {\textcolor{red}{\small $\a_1$}};
\node at (4.5,2) {\textcolor{red}{\small $\a_2$}};
\node at (5.5,2){\textcolor{red}{\small $\a_3$}};
\node at (6.5,2) {\textcolor{red}{\small $\a_4$}};

\node at (3.5,3) {\textcolor{red}{\small $\a_1$}};
\node at (4.5,3) {\textcolor{red}{\small $\a_2$}};
\node at (5.5,3) {\textcolor{red}{\small $\a_3$}};
\node at (6.5,3) {\textcolor{red}{\small $\a_4$}};

\node at (3,1.5) {\textcolor{red}{\small $\b_1$}};
\node at (4,1.5) {\textcolor{red}{\small $\b_1$}};
\node at (5,1.5) {\textcolor{red}{\small $\b_1$}};
\node at (6,1.5) {\textcolor{red}{\small $\b_1$}};
\node at (7,1.5) {\textcolor{red}{\small $\b_1$}};

\node at (3,2.5){\textcolor{red}{\small $\b_2$}};
\node at (4,2.5) {\textcolor{red}{\small $\b_2$}};
\node at (5,2.5) {\textcolor{red}{\small $\b_2$}};
\node at (6,2.5) {\textcolor{red}{\small $\b_2$}};
\node at (7,2.5){\textcolor{red}{\small $\b_2$}};
\end{tikzpicture}
    \centering
    \caption{In the coordinate system provided by  quadrant $\ZZ^2_{\geq 0}$ (dashed lattice), in order to understand the partition function along a down-right path  from $(0,m)$ to $(n,0)$ (in orange), we consider the model on a well-chosen up-translated strip $\widetilde{\strip^{\bw}}$ (solid line). Inside the blue rectangle $R_{nm}$, the partition functions at the blue vertices satisfy the same recurrence in both models.}
    \label{fig:embed quadrant into the strip}
\end{figure}
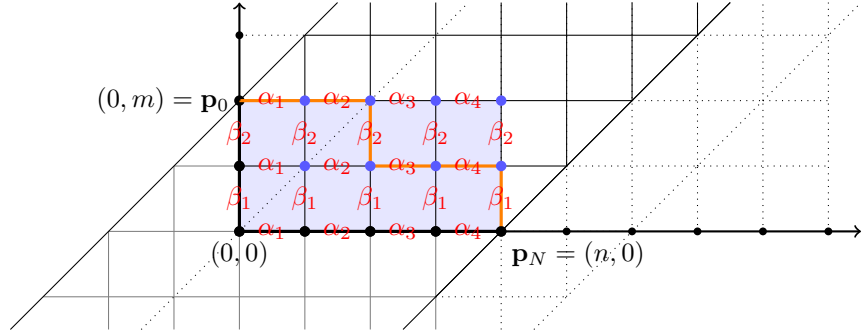

\phantomsection
\label{Step1}
\noindent\textbf{Step 1: Embedding into a strip.}
Fix an arbitrary $u\in\RR$ such that 
$$0\leq2u< \min\lb\min_{1\leq i\leq n} (\a_i),   \min_{1\leq j\leq m}(\b_j) \rb -\frac{d-1}{2}.$$
Define parameters $\bta=(\ta_k)_{1\leq k\leq N}\in\RR^N $ and $  v \in\RR$ by:
$$\ta_i=\a_i+u,\ 1\leq i\leq n; \quad \ta_{n+j}=\b_j-u, \ 1\leq j\leq m; \quad v=-u. $$
One verifies the parameters $\bta, u, v$ satisfy the hypothesis on parameters for the strip model in the equilibrium regime \eqref{eq:parameters inhomogeneous equilibrium regime}. 

Suppose that  the down-right path $\hp=(\p_k)_{0\leq k\leq N}$ has starting point $\p_0=(0,m)$ and ending point $(n,0)$ for some $n,m\geq 0$ with $n+m=N$. 
Consider the inverse-Wishart polymer model $Z^{\bta,u,v}$ on a strip $\strip^{\bw}$ \eqref{eq:strip with bottom}, with bottom shape $\bw=(\downarrow)^{m}\times (\rightarrow)^n$ and labels on edges given by rule \eqref{eq:labels on strip edges equilibrium}. Let us overlap the strip with the coordinate system given by the quadrant $\ZZ^2_{\geq 0}\subset \ZZ^2$. Consider vertical translation 
$v_{m}:\ZZ^2\rightarrow\ZZ^2, \ x\mapsto x+(0,m)$. Denote by
$\widetilde{\strip^{\bw}}$ the image of $\strip^{\bw}$ under $v_m$, together with its bottom $\hp^{\bw}_{\circ}$  and  labels on the edges given by rule  \eqref{eq:labels on strip edges equilibrium}. 
Let
$$R_{nm}=\left\{ (i,j):0\leq i\leq n, 0\leq j\leq m \right\}\subseteq \widetilde{\strip^{\bw}}  \cap \ZZ^2_{\geq 0} $$
be the rectangle  with vertices $(0,0), (0,m), (n,0), (n,m)$.
Note that on the edges inside $R_{nm}$, the labels for the translated strip $\widetilde{\strip^{\bw}}$  equal those for the quadrant $\ZZ^2_{\geq 0}$ given by rule \eqref{eq:label rules for inhomogeneous quadrant}. Therefore, the recurrence satisfied by partition functions indexed by vertices 
$\left\{ (i,j): 1\leq i\leq n, 1\leq j\leq m \right\}$ 
given by \eqref{eq:recurrence inhomogeneous strip} and \eqref{eq:partition function recurrence inhomogeneous quadrant} coincides. As a consequence, 
the partition functions inside $R_{nM}$ are the same for both model:
$$\lb Z^{\bta,u,v}(i,j)\rb_{(i,j)\in R_{nm}}=\lb Z^{\ba,\bb}(i,j)\rb_{(i,j)\in R_{nm}},$$
where the initial condition for the model on a strip are given by  the boundary conditions for the model on a quadrant restricted to the bottom of $\widetilde{\strip^{\bw}}$.
See Figure \ref{fig:embed quadrant into the strip} for illustration. 

\noindent\textbf{Step 2: Identify the boundary conditions.}  
The boundary conditions \eqref{eq: stationary boundary conditions for the inhomogeneous model on the quadrant} restricted to $v_m\hp^{\bw}_{\circ}$ has joint distribution 
$$\d\mu\big(B_0\big)
\times \d\bR^{(\ta+u)^m,\rightarrow}_{B_0}\Big( (B_{-k})_{1\leq k\leq m}\Big)
\times \d\bR^{(\ta-u)^n,\rightarrow}_{B_0}\Big( (B_k)_{1\leq k\leq n}\Big).$$
By  Lemma \ref{lem:arbitrary starting point}, it equals the inverse-Wishart random walk with $\mu$-distributed starting matrix 
$$\d\mu\big(B_{-m}\big)
\times \d\bR^{\bg,\rightarrow}_{B_{-m}}\Big( (B_{k})_{-m< k\leq n}\Big),$$
where 
$\bg=\big( \b_{m}, \cdots,\b_1,\a_1, \cdots,\a_n  \big)= 
\bg(v_m\hp^{\bw}_{\circ})$ given by \eqref{eq:labels on strip edges equilibrium}.
By applying Theorem \ref{thm: inhomogeneous equilibrium strip} for the model on the translated strip $\widetilde{\strip^{\bw}}$, the partition functions along $\hp$ is the inverse-Wishart random walk with $\mu$-distributed starting matrix distributed as \eqref{eq: desired rw for the inhomogeneous model}.
\end{proof}

In the special case $\ba=(\ta-u)^{\NN}, \bb=(\ta+u)^{\NN}$, the inhomogeneous model reduces to the homogeneous one (Definition \ref{def:quadrant model with bc}). Let us prove that the two-sided inverse-Wishart random walk is stationary for the model in the sense precised in Definition \ref{def:quadrant model with bc}.  We note that the stationarity in this case is obtained in \cite[Eq. (26) (27)]{KrajenbrinkLeDoussal} using a different method.  

\begin{proof}[Proof of Theorem \ref{thm:quadrant stationary measure}]
By Corollary \ref{cor:stationary bc reference point}, it suffices to prove that for the homogeneous model with boundary conditions 
$$\bB^{\ta,u}_{0,\mu}=\d\mu(B_0)\times \d\bR^{\ta-u,\ta+u}_{B_{0}}(\hat{\bB}^{(0)}),$$
the partition function process along the translated boundary is distributed as 
\be\label{eq:desired distribution}
\d\mu\Big( Z^{\ba,\bb}(\tau_1\mathbf{b}_0) \Big)
\times \d\bR^{\ta-u,\ta+u}_{Z^{\ba,\bb}(\tau_1\mathbf{b}_0)}
\Big( Z^{\ba,\bb}(\tau_1\mathbf{b}_k)_{k\neq0}\Big).\ee 
By Theorem \ref{thm:inhomogeneous quadrant}, for all $n,m\geq 0$, along the down-right path 
$$\hp=(\p_k)_{0\leq k\leq m+n+2}:(0,m)\rightarrow (1,m)\rightarrow(1,1)\rightarrow(n,1)\rightarrow(n,0),$$
the partition function process is distributed as \eqref{eq: desired rw for the inhomogeneous model}. By Lemma \ref{lem:arbitrary starting point}, its marginal on $(\p_k)_{1\leq k\leq n+m+1}= (\tau_1\mathbf{b}_k)_{-m\leq k\leq n}$ is distributed as 
$$\d\mu\Big( Z^{\ba,\bb}(\tau_1\mathbf{b}_0) \Big)
\times \d\bR^{(\ta+u)^m,\rightarrow}_{Z^{\ba,\bb}(\tau_1\mathbf{b}_0)}
\Big( Z^{\ba,\bb}(\tau_1\mathbf{b}_k)_{-m\leq k\leq -1}\Big)
\times \d\bR^{(\ta-u)^n,\rightarrow}_{Z^{\ba,\bb}(\tau_1\mathbf{b}_0)}
\Big( Z^{\ba,\bb}(\tau_1\mathbf{b}_k)_{1\leq k\leq n}\Big).$$
For every realization of $Z^{\ba,\bb}(\tau_1\mathbf{b}_0)\in\pd$, by Ionescu-Tulcea extension theorem \cite[Theorem 14.32]{Klenke}, $\d\bR^{\ta-u,\ta+u}_{Z^{\ba,\bb}(\tau_1\mathbf{b}_0)} $ is the unique probability measure with marginals 
$ \d\bR^{(\ta+u)^m,\rightarrow}_{Z^{\ba,\bb}(\tau_1\mathbf{b}_0)} \times \d\bR^{(\ta-u)^n,\rightarrow}_{Z^{\ba,\bb}(\tau_1\mathbf{b}_0)}$ for all $n,m\geq 0$. Hence the partition functions along $\tau_1\hp_{b}$ is distributed as \eqref{eq:desired distribution}.
\end{proof}

For general boundary conditions, the product structure of the increment process propagates into the bulk points that are below the reference point. 

\begin{proposition}\label{prop:finite stationary measure for quadrant with boudnary} Fix $M \geq 0$ and $S\in\pd$. Consider the model on the quadrant with boundary conditions such that the marginal distribution satisfies 
   $$\big( B_k \big)_{k\geq -M}\sim
    \delta_{S} \times \d\bR^{\bg ,\bw }_{ S},$$
   where $\bg=(\b_m,\cdots,\b_1,\a_1,\cdots)$, $\bw=(\downarrow)^m\times(\rightarrow)^{\NN}$.  
    Then the partition functions along any down-right path $\hp=(\p_k)_{0\leq k\leq N}$ starting with $\p_0=(0,M)$  is a  $\Wis^{\pm}$ random walk starting from $S$, that is, the probability measure 
        $$\delta_{S}\Big( Z^{\ba,\bb}(\p_0)\Big)\times \d\bR^{\bg(\hp),\bw(\hp)}_{ Z^{\ba,\bb}(\p_0)} \Big((Z^{\ba,\bb}(\p_i))_{1\leq i \leq N}\Big).$$
        where the labels $\bg(\hp)=(\ga_k)_{1\leq k\leq N}$ along the path are given by \eqref{eq:label rules for inhomogeneous quadrant}, 
\end{proposition}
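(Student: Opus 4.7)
The plan is to adapt the proof of Theorem \ref{thm:inhomogeneous quadrant}, substituting Proposition \ref{prop:finite stationary measure for equilibrium strip} (fixed reference matrix $S$) for Theorem \ref{thm: inhomogeneous equilibrium strip} ($\mu$-distributed reference matrix). The essential observation is that the quadrant's reference point $\mathbf{b}_{-M}=(0,M)$ can be aligned with the left corner of a suitably translated strip, which is precisely the configuration for which Proposition \ref{prop:finite stationary measure for equilibrium strip} is tailored: it requires the down-right paths $\hq$ under consideration to share the common starting point $\p_0=(0,0)$ in strip coordinates.

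First, I would reduce to the case where $\hp$ terminates on the $x$-axis at some $\p_N=(n,0)$. Any initial segment of the desired $\Wis^{\pm}$ random walk is again a $\Wis^{\pm}$ random walk of the same form, so it suffices to prove the statement for such maximal paths. Next, mimicking Step 1 of the proof of Theorem \ref{thm:inhomogeneous quadrant}, I would fix $u\in\RR$ with
\[
0\leq 2u< \min\lb \min_{1\leq i\leq n}\alpha_i,\; \min_{1\leq j\leq M}\beta_j\rb -\tfrac{d-1}{2},
\]
set $v=-u$, $\theta_i=\alpha_i+u$ for $1\leq i\leq n$, and $\theta_{n+j}=\beta_j-u$ for $1\leq j\leq M$, so that $\bta,u,v$ satisfies the equilibrium regime \eqref{eq:parameters inhomogeneous equilibrium regime}. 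I would then consider the inverse-Wishart polymer on the strip $\strip^{\bw}$ of width $N=M+n$ with bottom shape $\bw=(\downarrow)^M\times(\rightarrow)^n$, translated upward by $M$ so that its left corner coincides with $\mathbf{b}_{-M}$. As in Figure \ref{fig:embed quadrant into the strip}, the polymer recurrences and the edge labels of the two models coincide inside the rectangle $R_{nM}$ with corners $(0,0),(0,M),(n,0),(n,M)$ (here the choice $v=-u$ is essential), so their partition functions inside $R_{nM}$ agree whenever their initial data match along the translated bottom path.

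It then remains to check that the restriction of the quadrant boundary process to $(B_k)_{-M\leq k\leq n}$ is precisely $\delta_S\times\d\bR^{\bg(\hp^{\bw}_\circ),\bw(\hp^{\bw}_\circ)}_S$: this is immediate since going from $\mathbf{b}_{-M}$ down the left boundary to $\mathbf{b}_0$ produces independent inverse-Wishart increments with parameters $\beta_M,\ldots,\beta_1$, and then going right along the bottom to $\mathbf{b}_n$ produces $\alpha_1,\ldots,\alpha_n$, which matches the strip labeling \eqref{eq:labels on strip edges equilibrium} exactly. Applying Proposition \ref{prop:finite stationary measure for equilibrium strip} then gives the $\Wis^{\pm}$ random walk law for $(Z^{\bta,u,v}(\p_k))_{0\leq k\leq N}$ along $\hp$, and the agreement of the two models inside $R_{nM}$ transports this to $(Z^{\ba,\bb}(\p_k))_{0\leq k\leq N}$. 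There is no substantive obstacle: the argument is essentially bookkeeping on top of the Theorem \ref{thm:inhomogeneous quadrant} framework, and the only change — using a delta at $S$ instead of $\mu$ — propagates transparently through the embedding because Proposition \ref{prop:finite stationary measure for equilibrium strip} was designed precisely to accommodate a fixed left-corner reference matrix.
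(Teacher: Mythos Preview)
Your proposal is correct and follows essentially the same approach as the paper's own proof: embed the rectangle $R_{nM}$ into a translated strip with bottom shape $(\downarrow)^M\times(\rightarrow)^n$ exactly as in Step~1 of the proof of Theorem~\ref{thm:inhomogeneous quadrant}, then invoke Proposition~\ref{prop:finite stationary measure for equilibrium strip} in place of Theorem~\ref{thm: inhomogeneous equilibrium strip} to accommodate the fixed starting matrix $S$. The paper's proof is terser (it simply refers back to Step~1 of Theorem~\ref{thm:inhomogeneous quadrant}), but your expanded version spells out the same construction; the initial ``reduction'' to paths ending on the $x$-axis is in fact vacuous, since by the paper's definition a down-right path in the quadrant always terminates on the bottom boundary.
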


\begin{proof} 
The proof strategy is exactly the same as that in the proof of Theorem \ref{thm:inhomogeneous quadrant}. 
 For any down-right path $\hp=(\p_k)_{0\leq k\leq N}$ starting at $\p_0=(0,M)$ ending at $\p_N=(n,0)$ with $N=M+n$, 
we consider the same embedding as done in the \hyperref[Step1]{Step 1} of proof of Theorem \ref{thm:inhomogeneous quadrant}. 
The partition functions along a down-right path on $\ZZ^2_{\geq0}$ from $(0,M)$ to $(n,0)$ are distributed as an inverse-Wishart random walk starting from $B_{-M}=S$. By applying Proposition \ref{prop:finite stationary measure for equilibrium strip} for the translated strip $\widetilde{\strip^{\bw}}$, the partition functions along $\hp$ is again an inverse-Wishart random walk starting from $S$, as desired.
\end{proof}

In particular, when the environment is homogeneous $\ba=(\ta-u)^{\NN}, \bb=(\ta+u)^{\NN}$, let us prove Proposition \ref{prop:ratio homogeneous quadrant}.

\begin{proof}[Proof of Proposition \ref{prop:ratio homogeneous quadrant}]
Note that since the conclusion in Proposition \ref{prop:finite stationary measure for quadrant with boudnary}  holds for $\Wis^{\pm}$ random walk with arbitrary starting matrix $S \in \pd$, it remains valid for random  $S$ which is independent of the increments.  The first part of Proposition \ref{prop:ratio homogeneous quadrant} is hence the special case $\ba=(\theta+u)^{\mathbb{N}}, \bb=(\theta-u)^{\mathbb{N}}$ of Proposition \ref{prop:finite stationary measure for quadrant with boudnary}.  

For the second part,  consider two down-right paths inside $R_{nM}$: 
$$\hp=(\p_k)_{0\leq k\leq N}: (0,M)\rightarrow (n,M)\rightarrow (n,0);\quad \hq=(\q_k)_{0\leq k\leq N}:(0,M)\rightarrow (0,0)\rightarrow(n,0).$$
The joint probability distribution of partition functions along the paths are given by Proposition \ref{prop:finite stationary measure for quadrant with boudnary}, we have 
\begin{subequations}
    \begin{multline}\label{eq:log differences 1}
    \EE\Big[\log\abs{Z^{\ba,\bb}(n,m)}\Big]-\EE\Big[\log\abs{Z^{\ba,\bb}(0,M)}\Big]=\sum_{k=1}^{n+M-m} \lb\EE\Big[\log\abs{Z^{\ba,\bb}(\p_k)}\Big]-\EE\Big[\log\abs{Z^{\ba,\bb}(\p_{k-1})}\Big]\rb\\
        =  n \EE\Big[\log\abs{\Wisv(\theta-u)}\Big]+(M-m)\EE\Big[\log\abs{\Wis(\theta+u)}\Big], 
\end{multline}
\begin{multline}\label{eq:log differences 2}
    \EE\Big[\log\abs{Z^{\ba,\bb}(0,0)}\Big]-\EE\Big[\log\abs{Z^{\ba,\bb}(0,M)}\Big]=\sum_{k=1}^{M} \lb\EE\Big[\log\abs{Z^{\ba,\bb}(\q_k)}\Big]-\EE\Big[\log\abs{Z^{\ba,\bb}(\q_{k-1})}\Big]\rb\\
        =   M\EE\Big[\log\abs{\Wis(\theta+u)}\Big].
\end{multline}
\end{subequations}
Subtracting \eqref{eq:log differences 2} from \eqref{eq:log differences 1} yields the desired identity \eqref{eq:expectation of partition function}.
\end{proof}

\appendix
\section{Law of large numbers for the inverse-gamma polymer on a strip}\label{appendix:LLN_d1}
In this Appendix, we prove the $d=1$ case of Conjecture \ref{conj:LLN strip}:
\begin{theorem}\label{thm:LLN strip d=1}
    For $d=1$ and parameters $\ta, u,v$ in either the maximal current regime \eqref{eq:parameters homogeneous general regime} or the equilibrium regime \eqref{eq:parameters homogeneous equilibrium regime}, 
    consider the model on a strip of arbitrary width $N\geq 1$, with initial condition given by an arbitrary probability measure on ${(\RR_{>0})}^{N+1}$. Then we have the following convergence:
    \be
    \frac{\log Z^{\ta,u,v}(n,n)}{n}\xrightarrow{n\rightarrow\infty}
f^{\ta,u,v}\qquad \text{a.s.} \ee
for some (explicit) deterministic constant $f^{\ta,u,v}$. Moreover, in the equilibrium regime, we have
    \be
f^{\ta,u,-u}=-\psi(\ta-u)-\psi(\ta+u).  \ee  
\end{theorem}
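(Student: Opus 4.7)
The strategy is to exploit the fact—available only in the scalar case $d = 1$ (cf.\ Remark \ref{rmk:markovian issue strip})—that the process of increments between consecutive horizontal slices of the strip forms a genuine Markov chain, identify its stationary probability measure from our strip stationarity results, and apply Birkhoff's ergodic theorem.

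Set $R_k^m := Z^{\ta,u,v}(m+k, m)/Z^{\ta,u,v}(m, m)$ for $0 \leq k \leq N$. Iterating the recurrence \eqref{eq:recurrence homogeneous strip} from left to right along the slice at height $m+1$ expresses $R^{m+1}$ as an explicit deterministic function of $R^m$ and of the fresh disorders $W^{(m+1)} := (W(m+1+k, m+1))_{0 \leq k \leq N}$. Hence $(\eta_m)_{m \geq 0}$ with $\eta_m := (R_k^m)_{1 \leq k \leq N}$ is a time-homogeneous Markov chain on $(\RR_{>0})^N$ driven by the i.i.d.\ sequence $(W^{(m)})_{m \geq 1}$. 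In either regime, the corresponding strip stationarity result (Theorem \ref{thm:homogeneous equilibrium strip} or Theorem \ref{thm: homogeneous strip maximal current}) specialized to $d = 1$ supplies a stationary probability measure $\pi$ on $(\RR_{>0})^N$: in the equilibrium regime $\pi^{\mathrm{eq}}$ makes the consecutive ratios $R_{k+1}^m/R_k^m$ i.i.d.\ $\Gammainv(\ta - u)$; in the maximal current regime $\pi^{\mathrm{mc}}$ is the first-layer marginal of the log-gamma two-layer Whittaker process with the leftmost coordinate fixed to $1$.

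Telescoping along the diagonal via the left-boundary rule $Z^{\ta,u,v}(m+1, m+1) = W(m+1, m+1)\, Z^{\ta,u,v}(m+1, m)$ gives
\[
\log Z^{\ta,u,v}(n,n) - \log Z^{\ta,u,v}(0,0) = \sum_{m=0}^{n-1} \left(\log W(m+1, m+1) + \log R_1^m\right).
\]
Under $\PP_\pi$ the joint process $(\eta_m, W^{(m+1)})_{m \geq 0}$ is stationary and, granted its ergodicity, Birkhoff's theorem delivers
\[
\frac{\log Z^{\ta,u,v}(n,n)}{n} \xrightarrow[n \to \infty]{\text{a.s.}} \EE_\pi\!\left[\log W(1,1) + \log R_1^0\right] =: f^{\ta,u,v}.
\]
In the equilibrium regime, $R_1^0 \sim \Gammainv(\ta - u)$ is independent of $W(1,1) \sim \Gammainv(\ta + u)$ (the former is a function of disorders strictly below slice $1$, the latter is fresh), and the identity $\EE[\log G] = -\psi(\alpha)$ for $G \sim \Gammainv(\alpha)$ immediately produces $f^{\ta,u,-u} = -\psi(\ta+u) - \psi(\ta-u)$. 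In the maximal current regime, the explicit value can be extracted from the integral representations of the two-layer Whittaker process following \cite{Barraquand}.

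To pass from the $\pi$-stationary initialization to an arbitrary probability initial condition, couple two copies of the polymer driven by the same bulk disorder — one starting from the prescribed law, the other from $\pi$ — and control the free energy difference $|\log Z^{\ta,u,v}(n,n) - \log \widetilde{Z}^{\ta,u,v}(n,n)|$. Because the strip has fixed width $N$, this difference is dictated by initial data only and remains $o(n)$ almost surely, so the limit transfers. The main obstacle is verifying the ergodicity and mixing properties invoked above: uniqueness of the invariant probability measure of $(\eta_m)$ on the unbounded state space $(\RR_{>0})^N$, and Harris-type recurrence strong enough to justify both Birkhoff's theorem from a non-stationary start and the coupling estimate. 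This calls for a Lyapunov-function analysis adapted to the inverse-gamma tails of the disorder, which we expect to be the most delicate step of the argument.
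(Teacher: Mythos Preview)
Your approach is essentially the same as the paper's: form the Markov chain of horizontal increments, invoke its stationary probability measure, and apply Birkhoff's ergodic theorem to a telescoping decomposition of $\log Z^{\ta,u,v}(n,n)$. Your decomposition using the explicit left-boundary disorder $W(m+1,m+1)$ is in fact equivalent to (and slightly more direct than) the paper's, which instead introduces a symmetric vertical-increment process $\mathbf V_k$ and appeals to the $u\leftrightarrow v$ symmetry; since the boundary recurrence gives $\mathbf V_{k}(1)=Z(k,k)/Z(k,k-1)=W(k,k)$, the two decompositions coincide and yield the same limit.

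The one place where the paper goes further than your proposal is precisely the step you flag as ``the most delicate'': uniqueness and ergodicity of the invariant probability measure for the increment chain is not proved from scratch but imported from \cite[Theorem 1.6]{BarraquandCorwinYang} (stated here as Proposition \ref{prop:ergodicity measure log gamma strip}). With that in hand, a Markov-chain ergodic theorem valid from arbitrary initial distributions (the paper cites \cite[Theorem 4.50]{BenaimHurth}) gives the a.s.\ convergence directly---no separate coupling argument or Lyapunov analysis is needed. So your outline is correct, but the hard work you anticipate has already been done elsewhere and can simply be cited.
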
 

Let us first review some known results for the model.
When $d=1$, the inverse-Wishart polymer on a strip (Definition \ref{def:homogeneous strip model}) reduces to the log-gamma polymer on a strip defined in \cite{BarraquandCorwinYang}. 
For $k\geq 0$ and $1\leq l\leq N$, define 
$$\mathbf{H}_{k}(l):=Z^{\ta,u,v}(k+l,k)/Z^{\ta,u,v}(k,k),\qquad \mathbf{V}_{k}(l):=Z^{\ta,u,v}(k,k)/Z^{\ta,u,v}(k,k-l) $$
to be the horizontal and (reversely ordered) vertical increment process, respectively.
Under the recurrence of partition functions, $\big(\mathbf{H}_{k}\big)_{k\geq0}$ and $\big(\mathbf{V}_{k}\big)_{k\geq0}$ are Markov processes on $ \RR_{>0}^{N}$ which does not depend on the value of $Z^{\ta,u,v}(k,k)$'s. 

The two-layer Whittaker process $\PP^{\ga,u,v}_{\la^0_1}\big( \la_2^0, (\la^i)_{1\leq i \leq N}\big)$ starting from $\la^0_1$ ($d=1$ case of Definition \ref{def: homogeneous two layer matrix whittaker process}) is a probability measure on $\RR_{>0}\times \lb \RR_{>0}^{2}\rb^{N}$.  We denote its first-layer marginal by $\bP^{\ga,u,v}_{\la^0_1} \big( \bl_1 \big)$. In the special case $u+v=0$, the law of $\bl_1$ reduce to the inverse-gamma random walk \cite[Eq. ~(7)]{BarraquandCorwinYang}, 
\be\label{eq:stationary measure d=1, u+v=0}\bP^{\ga,u,-u}_{\la^0_1}=\bR^{(\ta-u)^{N},\rightarrow}_{\la^0_1}.\ee 
In general, we have the following.
\begin{proposition}[{\cite[Theorem 1.6]{BarraquandCorwinYang}}]\label{prop:ergodicity measure log gamma strip}
$\bP^{\ga,u,v}_{1} $ is the unique ergodic stationary measure of the process $(\mathbf{H}_{k})_{k\geq0}$.
\end{proposition}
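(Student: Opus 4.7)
The plan is to establish, in order: (i) stationarity of $\bP^{\theta,u,v}_1$ under $(\mathbf{H}_k)_{k\geq 0}$, (ii) uniqueness of the invariant probability measure, and (iii) ergodicity (which will be a soft consequence of uniqueness).

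First I would prove stationarity by invoking Theorem~\ref{thm: homogeneous strip maximal current} in the scalar case $d=1$, which states that the $\sigma$-finite measure $\d\mu(\la^0_1) \times \d\bP^{\theta,u,v}_{\la^0_1}$ is stationary for the full process of partition functions on horizontal sections of the strip. The key observation is that the two-layer Whittaker density \eqref{eq:two-layer density strip} is homogeneous under the simultaneous rescaling $\la^i_j \mapsto c\,\la^i_j$, so the conditional law of the normalized ratios $(\la^i_1/\la^0_1)_{1\leq i\leq N}$ under $\bP^{\theta,u,v}_{\la^0_1}$ is independent of $\la^0_1$ and coincides with $\bP^{\theta,u,v}_1$. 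Since $\mathbf{H}_k(l) = Z^{\theta,u,v}(k+l,k)/Z^{\theta,u,v}(k,k)$ is defined via ratios, the scale factor $Z^{\theta,u,v}(k,k)$ cancels out, and the stationarity of the full partition-function process projects down to stationarity of $\bP^{\theta,u,v}_1$ for $(\mathbf{H}_k)$.

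For uniqueness I would exploit the smooth structure of the transition kernel: integrating the polymer recurrence against the independent inverse-gamma variables $W(m,m), W(m+N,m), W(n,m)$ on one anti-diagonal yields a transition density that is smooth and strictly positive on all of $\RR_{>0}^N$. This already gives irreducibility and a Doeblin minorization on any compact subset of $\RR_{>0}^N$. To control excursions of the chain outside such compacts, I would employ a synchronous coupling driven by the same disorder: the log-gamma recurrence $Z(n,m)=W(n,m)\big(Z(n-1,m)+Z(n,m-1)\big)$ preserves coordinate-wise order of two copies and smooths their differences through the additive combination at each interior vertex. Combining this with the Doeblin minorization through a standard coupling/regeneration argument yields uniqueness of the invariant probability measure. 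Ergodicity then follows at once: any non-ergodic invariant probability measure would decompose as a non-trivial convex combination of ergodic invariant measures, contradicting uniqueness.

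The hard part will be the contraction step in the coupling. Positive-density irreducibility is essentially free, but upgrading it to genuine convergence requires a Lyapunov or contraction estimate exploiting the additive smoothing at each bulk vertex, not just the preservation of order. A natural route is to pass through a scalar observable such as $\sum_{l=1}^N \mathbf{H}_k(l)$ or $\max_l \log \mathbf{H}_k(l) - \min_l \log \mathbf{H}_k(l)$ and reduce the problem to a one-dimensional random dynamical system where classical ergodic tools apply. Since the proposition is cited as \cite[Theorem 1.6]{BarraquandCorwinYang}, the detailed implementation of this strategy is carried out in that reference, and we simply invoke the result here in order to use it in the proof of Theorem~\ref{thm:LLN strip d=1}.
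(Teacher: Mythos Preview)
The paper does not prove this proposition at all: it is stated purely as a citation of \cite[Theorem 1.6]{BarraquandCorwinYang}, with no argument given. Your final paragraph recognizes this and correctly concludes by invoking the reference, which is exactly what the paper does.

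Your sketched argument for stationarity (part (i)) is sound and is essentially the $d=1$ shadow of the paper's own machinery: the translation invariance \eqref{eq:translation invariance} of the Gibbs weight is precisely the homogeneity you describe, and the first-layer marginal dynamics of Lemma~\ref{lem: IW First layer operator} are exactly the polymer recurrence, so projecting to ratios is legitimate. For uniqueness and ergodicity, your outline (strong Feller property from smooth positive densities, Doeblin minorization on compacts, synchronous coupling via monotonicity of the recursion) is a reasonable strategy, but you correctly flag that the contraction/Lyapunov step is the genuine content and is not carried out here. The actual proof in \cite{BarraquandCorwinYang} uses a somewhat different route, relying on an explicit contraction estimate in the Hilbert projective metric for the log-gamma recursion rather than a generic Doeblin argument; this is sharper and avoids the compactness issues you allude to. Since the present paper only needs the statement as a black box for Theorem~\ref{thm:LLN strip d=1}, your conclusion to simply cite the result is appropriate.
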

We are now ready to prove Theorem \ref{thm:LLN strip d=1}. The proof is based on Birkhoff's ergodic theorem and, in particular, avoids using the subadditivity of the partition functions of the polymer, which is the standard tool to prove convergence for the model on the quadrant.
\begin{proof}[Proof of Theorem \ref{thm:LLN strip d=1}]
Consider the function $g: \RR_{>0}^{N}\longrightarrow \RR$ defined by $ \mathbf{X}\mapsto \log\big( \mathbf{X}^{(1)}\big)$ where $ \mathbf{X}^{(1)}$ denotes the first coordinate of $ \mathbf{X}$. By Proposition \ref{prop:ergodicity measure log gamma strip} and Birkhoff's ergodic theorem (see, for example, \cite[Theorem 4.50]{BenaimHurth}), we have 
$$\frac{1}{n} \sum_{k=0}^{n-1} g(\mathbf{H}_{k})\xrightarrow{n\rightarrow\infty}\EE_{\bP^{\ta,u,v}_{1}}\Big[ g(\mathbf{H}) \Big] \qquad \text{a.s.}$$
In other words,
\be\label{eq:cv of horizontal process}\frac{1}{n} \sum_{k=0}^{n-1} \log\Big( \mathbf{H}_{k}(1)\Big)\xrightarrow{n\rightarrow\infty}\EE_{\bP^{\ta,u,v}_{1}}\Big[ \log \big(\mathbf{H}^{(1)}\big) \Big] \qquad \text{a.s.}\ee
By symmetry of the model, the Markov process $(\mathbf{V}_{k})_{k\geq0}$ admits the same stationary measure as $(\mathbf{H}_{k})_{k\geq 0}$, provided one exchanges the role of $u$ and $v$. Hence, analogously,
\be\label{eq:cv of vertical process}\frac{1}{n} \sum_{k=0}^{n-1} \Big(\mathbf{V}_{k}(1)\Big)\xrightarrow{n\rightarrow\infty}\EE_{\bP^{\ta,v,u}_{1}}\Big[ \log\big( \mathbf{V}^{(1)} \big)\Big] \qquad \text{a.s.}\ee
Note that the partition function can be decomposed as
\begin{multline*}
Z^{\ta,u,v}(n,n)
= Z^{\ta,u,v}(0,0)\prod_{k=0}^{n-1} \frac{Z^{\ta,u,v}(k+1,k)}{Z^{\ta,u,v}(k,k)}\prod_{k=0}^{n-1} \frac{Z^{\ta,u,v}(k+1,k+1)}{Z^{\ta,u,v}(k+1,k)}\\
=Z^{\ta,u,v}(0,0)\prod_{k=0}^{n-1} \mathbf{H}_{k}(1) \prod_{k=0}^{n-1} \mathbf{V}_{k+1}(1).
\end{multline*}
Taking logarithm of both sides, combining with \eqref{eq:cv of horizontal process} and \eqref{eq:cv of vertical process}, we obtain  
\be\label{eq:strip free energy limit d=1}\frac{\log Z^{\ta,u,v}(n,n)}{n}\xrightarrow{a.s.}
f^{\ta,u,v}
= \EE_{\bP^{\ta,u,v}_{1}}\Big[ \log \big(\mathbf{H}^{(1)}\big) \Big] +\EE_{\bP^{\ta,v,u}_{1}}\Big[ \log\big( \mathbf{V}^{(1)} \big)\Big].  \ee

When $u+v=0$, the stationary measure simplifies significantly (see \eqref{eq:stationary measure d=1, u+v=0}). In particular, the increments are i.i.d. inverse-gamma distributed. As a consequence,
\begin{align*}
f^{\ta,u,-u}
&= \EE_{\bP^{\ta,u,-u}_{1}}\Big[ \log \big(\mathbf{H}^{(1)}\big) \Big] +\EE_{\bP^{\ta,-u,u}_{1}}\Big[ \log\big( \mathbf{V}^{(1)} \big)\Big] \\
&=\EE\Big[ \log \Big(\Gammainv(\a-u)\Big) \Big] +\EE\Big[ \log\Big( \Gammainv(\a+u) \Big) \Big]\\ 
&= -\psi(\ta-u)-\psi(\ta+u), 
\end{align*}
where we recall that $\psi$ denotes the digamma function.
\end{proof}
  
\begin{remark}
Under the stationary measure $\PP^{\ga,u,v}_{1}$ for general $u,v$,  
the multi-points characteristic function can be expressed explicitly in terms of contour integrals involving Gamma functions \cite[Theorem~1.11]{Barraquand}.  
Consequently, the limit $f^{\ta,u,v}$ appearing in \eqref{eq:strip free energy limit d=1} can be further identified through a contour integral representation involving Gamma and digamma functions.
\end{remark}

\renewcommand{\emph}[1]{\textit{#1}}

\newrefcontext[sorting=nyt]
\printbibliography

@book{Anderson,
author = {Anderson, T. W.},
address = {Hoboken, N.J},
booktitle = {An introduction to multivariate statistical analysis},
edition = {Third edition.},
isbn = {0471360910},
keywords = {Multivariate analysis},
language = {eng},
publisher = {Wiley},
series = {Wiley series in probability and statistics},
title = {An introduction to multivariate statistical analysis },
year = {2003},
}

@article{AristaBisiO’ConnellMatsumoto–YorandDufresne,
author = {Jonas Arista and Elia Bisi and Neil O’Connell},
title = {{Matsumoto–Yor and Dufresne type theorems for a random walk on positive definite matrices}},
volume = {60},
journal = {Annales de l'Institut Henri Poincaré, Probabilités et Statistiques},
number = {2},
publisher = {Institut Henri Poincaré},
pages = {923 -- 945},
keywords = {Intertwining relations, Lyapunov exponents, Matrix Dufresne identity, Matrix Matsumoto–Yor theorem, matrix variate distributions, Stochastic matrix recursions and equations, Wishart and Beta distributions},
year = {2024},
doi = {10.1214/22-AIHP1338},
URL = {https://doi.org/10.1214/22-AIHP1338}
}

@Article{AristaBisiOConnell,
  author    = {Arista, Jonas and Bisi, Elia and O’Connell, Neil},
  journal   = {Probab. Theory Rel.},
  title     = {Matrix {Whittaker} processes},
  year      = {2023},
  issn      = {1432-2064},
  month     = may,
  number    = {1–2},
  pages     = {203–257},
  volume    = {187},
  doi       = {10.1007/s00440-023-01210-y},
  fjournal  = {Probability Theory and Related Fields},
  publisher = {Springer Science and Business Media LLC},
  url       = {http://dx.doi.org/10.1007/s00440-023-01210-y},
}

@Article{Barraquand,
  author        = {Guillaume Barraquand},
  journal       = {to appear in Bulletin de la Société Mathématique de France},
  title         = {Integral formulas for two-layer {S}chur and {W}hittaker processes},
  year          = {2024},
  archiveprefix = {arXiv},
  eprint        = {2409.08927},
  primaryclass  = {math.PR},
  url           = {https://arxiv.org/abs/2409.08927},
}

@Article{BarraquandCorwinYang,
author={Barraquand, Guillaume
and Corwin, Ivan
and Yang, Zongrui},
title={Stationary measures for integrable polymers on a strip},
journal={Inventiones mathematicae},
year={2024},
month={Sep},
day={01},
volume={237},
number={3},
pages={1567-1641},
abstract={We prove that the stationary measures for the free-energy increment process for the geometric last passage percolation (LPP) and log-gamma polymer model on a diagonal strip is given by a marginal of a two-layer Gibbs measure with a simple and explicit description. This result is shown subject to certain restrictions on the parameters controlling the weights on the boundary of the strip. However, from this description and an analytic continuation argument we are able to access the stationary measure for all boundary parameters. Taking an intermediate disorder limit of the log-gamma polymer stationary measure in a strip we readily recover (modulo convergence of the polymer to the open KPZ equation, Conjecture 4.2) the conjectural description from (Barraquand, Le Doussal in Europhys. Lett. 137(6):61003, 2022) of the open KPZ stationary measure for all choices of boundary parameters {\$}u,v{\backslash}in {\backslash}mathbb{\{}R{\}}{\$}(thus going beyond the restriction {\$}u+v{\backslash}geq 0{\$}from (Corwin, Knizel in Stationary measure for the open KPZ equation, 2021, arXiv:2103.12253)).},
issn={1432-1297},
doi={10.1007/s00222-024-01277-x},
url={https://doi.org/10.1007/s00222-024-01277-x}
}

@Article{BarraquandLeDoussal,
  author    = {Barraquand, Guillaume and Le Doussal, Pierre},
  journal   = {Europhys. Lett.},
  title     = {Steady state of the {KPZ} equation on an interval and {L}iouville quantum mechanics},
  year      = {2021},
  issn      = {1286-4854},
  month     = mar,
  number    = {6},
  pages     = {61003},
  volume    = {137},
  doi       = {10.1209/0295-5075/ac25a9},
  fjournal  = {Europhysics Letters},
  publisher = {IOP Publishing},
  url       = {http://dx.doi.org/10.1209/0295-5075/ac25a9},
}

@Inbook{BenaimHurth,
author="Bena{\"i}m, Michel
and Hurth, Tobias",
title="Invariant and Ergodic Probability Measures",
bookTitle="Markov Chains on Metric Spaces: A Short Course",
year="2022",
publisher="Springer International Publishing",
address="Cham",
pages="47--83",
abstract="The chapter starts with a detailed exposition on weak convergence and tightness. Then the class of invariant probability measures is defined and it is shown that the weak limit points for the empirical occupation measures of a Feller chain almost surely belong to this class. For the empirical occupation measures, we discuss tightness criteria based on Lyapunov functions. Next, random contractions are presented as a class of random dynamical systems for which one has uniqueness of the invariant probability measure. Then several classical results from ergodic theory are stated and proved, most importantly the Poincar{\'e} recurrence theorem, the Birkhoff ergodic theorem, and the ergodic decomposition theorem, the latter both for deterministic dynamical systems and for Markov chains. Finally, invariant probability measures for Markov processes in continuous time are discussed and it is explained how their properties can be deduced from the discrete-time theory.",
isbn="978-3-031-11822-7",
doi="10.1007/978-3-031-11822-7_4",
url="https://doi.org/10.1007/978-3-031-11822-7_4"
}

@Article{BertiniGiacomin,
  author   = {Bertini, Lorenzo and Giacomin, Giambattista},
  journal  = {Commun. Math. Phys.},
  title    = {Stochastic Burgers and KPZ Equations from Particle Systems},
  year     = {1997},
  issn     = {1432-0916},
  month    = {Feb},
  number   = {3},
  pages    = {571-607},
  volume   = {183},
  day      = {01},
  doi      = {10.1007/s002200050044},
  fjournal = {Communications in Mathematical Physics},
  url      = {https://doi.org/10.1007/s002200050044},
}

@Article{Blackwell,
  author    = {David Blackwell},
  journal   = {Ann. Math.},
  title     = {Idempotent {M}arkoff Chains},
  year      = {1942},
  issn      = {0003486X, 19398980},
  number    = {3},
  pages     = {560--567},
  volume    = {43},
  fjournal  = {Annals of Mathematics},
  publisher = {[Annals of Mathematics, Trustees of Princeton University on Behalf of the Annals of Mathematics, Mathematics Department, Princeton University]},
  url       = {http://www.jstor.org/stable/1968811},
  urldate   = {2024-12-25},
}

@Article{BorodinCorwin,
  author   = {Borodin, Alexei and Corwin, Ivan},
  journal  = {Probab. Theory Rel.},
  title    = {Macdonald processes},
  year     = {2014},
  issn     = {1432-2064},
  month    = {Feb},
  number   = {1},
  pages    = {225-400},
  volume   = {158},
  day      = {01},
  doi      = {10.1007/s00440-013-0482-3},
  fjournal = {Probability Theory and Related Fields},
  url      = {https://doi.org/10.1007/s00440-013-0482-3},
}

@Article{BorodinCorwinRemenik,
  author    = {Borodin, Alexei and Corwin, Ivan and Remenik, Daniel},
  journal   = {Commun. Math. Phys.},
  title     = {Log-Gamma Polymer Free Energy Fluctuations via a {F}redholm Determinant Identity},
  year      = {2013},
  issn      = {1432-0916},
  month     = jul,
  number    = {1},
  pages     = {215–232},
  volume    = {324},
  doi       = {10.1007/s00220-013-1750-x},
  fjournal  = {Communications in Mathematical Physics},
  publisher = {Springer Science and Business Media LLC},
  url       = {http://dx.doi.org/10.1007/s00220-013-1750-x},
}

@Article{BrycKuznetsovWangWesołowski,
  author    = {Bryc, Włodek and Kuznetsov, Alexey and Wang, Yizao and Wesołowski, Jacek},
  journal   = {Probab. Theory Rel.},
  title     = {Markov processes related to the stationary measure for the open {KPZ} equation},
  year      = {2022},
  issn      = {1432-2064},
  month     = feb,
  number    = {1–2},
  pages     = {353–389},
  volume    = {185},
  doi       = {10.1007/s00440-022-01110-7},
  fjournal  = {Probability Theory and Related Fields},
  publisher = {Springer Science and Business Media LLC},
  url       = {http://dx.doi.org/10.1007/s00440-022-01110-7},
}

@Article{BrycWesołowski,
  author    = {Włodek Bryc and Jacek Wesołowski},
  journal   = {Ann. Probab.},
  title     = {{Askey–Wilson polynomials, quadratic harnesses and martingales}},
  year      = {2010},
  number    = {3},
  pages     = {1221 -- 1262},
  volume    = {38},
  doi       = {10.1214/09-AOP503},
  fjournal  = {The Annals of Probability},
  keywords  = {harnesses, hypergeometric orthogonal polynomials, orthogonal martingale polynomials, Quadratic conditional variances},
  publisher = {Institute of Mathematical Statistics},
  url       = {https://doi.org/10.1214/09-AOP503},
}

@book{Comets,
  title={Directed polymers in random environments},
  author={Comets, F.},
  year={2017},
  publisher={Springer}
}

@Article{CorwinHammond1,
author={Corwin, Ivan
and Hammond, Alan},
title={Brownian Gibbs property for Airy line ensembles},
journal={Inventiones mathematicae},
year={2014},
month={Feb},
day={01},
volume={195},
number={2},
pages={441-508},
issn={1432-1297},
doi={10.1007/s00222-013-0462-3},
url={https://doi.org/10.1007/s00222-013-0462-3}
}

@Article{CorwinHammond2,
  author   = {Corwin, Ivan and Hammond, Alan},
  journal  = {Probab. Theory Rel.},
  title    = {KPZ line ensemble},
  year     = {2016},
  issn     = {1432-2064},
  month    = {Oct},
  number   = {1},
  pages    = {67-185},
  volume   = {166},
  day      = {01},
  doi      = {10.1007/s00440-015-0651-7},
  fjournal = {Probability Theory and Related Fields},
  url      = {https://doi.org/10.1007/s00440-015-0651-7},
}

@Article{CorwinKnizel,
  author   = {Corwin, Ivan and Knizel, Alisa},
  journal  = {Commun. Pure Appl. Math.},
  title    = {Stationary measure for the open {KPZ} equation},
  year     = {2024},
  number   = {4},
  pages    = {2183-2267},
  volume   = {77},
  doi      = {https://doi.org/10.1002/cpa.22174},
  fjournal = {Communications on Pure and Applied Mathematics},
  url      = {https://onlinelibrary.wiley.com/doi/abs/10.1002/cpa.22174},
}

@Article{CorwinO’ConnellSeppalainenZygouras,
  author    = {Corwin, Ivan and O’Connell, Neil and Seppäläinen, Timo and Zygouras, Nikolaos},
  journal   = {Duke Math. J.},
  title     = {Tropical combinatorics and {W}hittaker functions},
  year      = {2014},
  issn      = {0012-7094},
  month     = feb,
  number    = {3},
  volume    = {163},
  doi       = {10.1215/00127094-2410289},
  fjournal  = {Duke Mathematical Journal},
  publisher = {Duke University Press},
  url       = {http://dx.doi.org/10.1215/00127094-2410289},
}

@Article{CorwinShen,
  author        = {Corwin, Ivan and Shen, Hao},
  journal       = {Commun. Pure Appl. Math.},
  title         = {Open ASEP in the Weakly Asymmetric Regime},
  year          = {2018},
  number        = {10},
  pages         = {2065-2128},
  volume        = {71},
  abstract      = {Abstract We consider ASEP on a bounded interval and on a half-line with sources and sinks. On the full line, Bertini and Giacomin in 1997 proved convergence under weakly asymmetric scaling of the height function to the solution of the KPZ equation. We prove here that under similar weakly asymmetric scaling of the sources and sinks as well, the bounded interval ASEP height function converges to the KPZ equation on the unit interval with Neumann boundary conditions on both sides (different parameter for each side), and likewise for the half-line ASEP to KPZ on a half-line. This result can be interpreted as showing that the KPZ equation arises at the triple critical point (maximal current / high density / low density) of the open ASEP. © 2018 Wiley Periodicals, Inc.},
  archiveprefix = {arxiv},
  doi           = {https://doi.org/10.1002/cpa.21744},
  eprint        = {1610.04931},
  fjournal      = {Communications on Pure and Applied Mathematics},
  url           = {https://onlinelibrary.wiley.com/doi/abs/10.1002/cpa.21744},
}

@misc{DasdasSerio,
      title={Convergence to stationary measures for the half-space log-gamma polymer}, 
      author={Sayan Das and Christian Serio},
      year={2025},
      eprint={2402.16834},
      archivePrefix={arXiv},
      primaryClass={math.PR},
      url={https://arxiv.org/abs/2402.16834}, 
      note   = {arXiv:2402.16834},
}

@Article{DerridaEvansHakimPasquier,
  author   = {B Derrida and M R Evans and V Hakim and V Pasquier},
  journal  = {J. Phys. A},
  title    = {Exact solution of a 1D asymmetric exclusion model using a matrix formulation},
  year     = {1993},
  month    = {apr},
  number   = {7},
  pages    = {1493},
  volume   = {26},
  doi      = {10.1088/0305-4470/26/7/011},
  fjournal = {Journal of Physics A: Mathematical and General},
  url      = {https://dx.doi.org/10.1088/0305-4470/26/7/011},
}

@misc{DunlapSorensen,
      title={Viscous shock fluctuations in KPZ}, 
      author={Alexander Dunlap and Evan Sorensen},
      year={2025},
      eprint={2406.06502},
      archivePrefix={arXiv},
      primaryClass={math.PR},
      url={https://arxiv.org/abs/2406.06502}, 
       note   = {arXiv:2406.06502},
}

@Article{FunakiQuastel,
author={Funaki, Tadahisa
and Quastel, Jeremy},
title={{KPZ} equation, its renormalization and invariant measures},
journal={Stochastic Partial Differential Equations: Analysis and Computations},
year={2015},
month={Jun},
day={01},
volume={3},
number={2},
pages={159-220},
issn={2194-041X},
doi={10.1007/s40072-015-0046-x},
url={https://doi.org/10.1007/s40072-015-0046-x}
}

@Article{GautiBouchaudLeDoussal,
  author    = {Gautié, Tristan and Bouchaud, Jean-Philippe and Le Doussal, Pierre},
  journal   = {J. Phys. Math. Theor.},
  title     = {Matrix Kesten recursion, inverse-Wishart ensemble and fermions in a Morse potential},
  year      = {2021},
  issn      = {1751-8121},
  month     = may,
  number    = {25},
  pages     = {255201},
  volume    = {54},
  doi       = {10.1088/1751-8121/abfc7f},
  fjournal  = {Journal of Physics A: Mathematical and Theoretical},
  publisher = {IOP Publishing},
  url       = {http://dx.doi.org/10.1088/1751-8121/abfc7f},
}

@Article{GrabschTexier,
  author    = {Grabsch, Aurélien and Texier, Christophe},
  journal   = {Europhys. Lett.},
  title     = {Topological phase transitions in the 1D multichannel Dirac equation with random mass and a random matrix model},
  year      = {2016},
  issn      = {1286-4854},
  month     = oct,
  number    = {1},
  pages     = {17004},
  volume    = {116},
  doi       = {10.1209/0295-5075/116/17004},
  fjournal  = {EPL (Europhysics Letters)},
  publisher = {IOP Publishing},
  url       = {http://dx.doi.org/10.1209/0295-5075/116/17004},
}

@Article{Herz,
  author    = {Carl S. Herz},
  journal   = {Ann. Math.},
  title     = {Bessel Functions of Matrix Argument},
  year      = {1955},
  issn      = {0003486X, 19398980},
  number    = {3},
  pages     = {474--523},
  volume    = {61},
  fjournal  = {Annals of Mathematics},
  publisher = {[Annals of Mathematics, Trustees of Princeton University on Behalf of the Annals of Mathematics, Mathematics Department, Princeton University]},
  url       = {http://www.jstor.org/stable/1969810},
  urldate   = {2025-07-08},
}

@book{HornJohnson, place={Cambridge}, title={Matrix Analysis}, publisher={Cambridge University Press}, author={Horn, Roger A. and Johnson, Charles R.}, year={1985}}

@misc{JanjigianRassoul-AghaSeppäläinen,
      title={Ergodicity and synchronization of the Kardar-Parisi-Zhang equation}, 
      author={Christopher Janjigian and Firas Rassoul-Agha and Timo Seppäläinen},
      year={2022},
      eprint={2211.06779},
      archivePrefix={arXiv},
      primaryClass={math.PR},
      url={https://arxiv.org/abs/2211.06779}, 
      note   = {arXiv:2211.06779}
}

@article{KrajenbrinkLeDoussal,
  title = {Integrable matrix probabilistic diffusions and the matrix stochastic heat equation},
  author = {Krajenbrink, Alexandre and Le Doussal, Pierre},
  journal = {Phys. Rev. E},
  volume = {112},
  issue = {3},
  pages = {L032102},
  numpages = {8},
  year = {2025},
  month = {Sep},
  publisher = {American Physical Society},
  doi = {10.1103/yw8b-hmtv},
  url = {https://link.aps.org/doi/10.1103/yw8b-hmtv}
}

@book{Klenke,
	author = {Achim Klenke},
	editor = {},
	publisher = {Springer},
	title = {Probability Theory: A Comprehensive Course},
	year = {2007}
}

@article{KrugTang,
  title = {Disorder-induced unbinding in confined geometries},
  author = {Krug, Joachim and Tang, Lei-Han},
  journal = {Phys. Rev. E},
  volume = {50},
  issue = {1},
  pages = {104--115},
  numpages = {0},
  year = {1994},
  month = {Jul},
  publisher = {American Physical Society},
  doi = {10.1103/PhysRevE.50.104},
  url = {https://link.aps.org/doi/10.1103/PhysRevE.50.104}
}

@Article{Liggett,
  author    = {Thomas M. Liggett},
  journal   = {Trans. Amer. Math. Soc.},
  title     = {Ergodic Theorems for the Asymmetric Simple Exclusion Process},
  year      = {1975},
  issn      = {00029947},
  pages     = {237--261},
  volume    = {213},
  abstract  = {Consider the infinite particle system on the integers with the simple exclusion interaction and one-particle motion determined by $p(x, x + 1) = p$ and $p(x, x - 1) = q$ for $x \in Z$, where $p + q = 1$ and $p > q$. If $\mu$ is the initial distribution of the system, let $\mu_t$ be the distribution at time $t$. The main results determine the limiting behavior of $\mu_t$ as $t \rightarrow \infty$ for simple choices of $\mu$. For example, it is shown that if $\mu$ is the pointmass on the configuration in which all sites to the left of the origin are occupied, while those to the right are vacant, then the system converges as $t \rightarrow \infty$ to the product measure on $\{0, 1\}^Z$ with density $1/2$. For the proof, an auxiliary process is introduced which is of interest in its own right. It is a process on the positive integers in which particles move according to the simple exclusion process, but with the additional feature that there can be creation and destruction of particles at one. Ergodic theorems are proved for this process also.},
  fjournal  = {Transactions of the American Mathematical Society},
  publisher = {American Mathematical Society},
  url       = {http://www.jstor.org/stable/1998046},
  urldate   = {2025-10-06},
}

@article{Lukacs,
author = {Eugene Lukacs},
title = {{A Characterization of the Gamma Distribution}},
volume = {26},
journal = {The Annals of Mathematical Statistics},
number = {2},
publisher = {Institute of Mathematical Statistics},
pages = {319 -- 324},
year = {1955},
doi = {10.1214/aoms/1177728549},
URL = {https://doi.org/10.1214/aoms/1177728549}
}

@article{MacDonaldGibbsPipkin,
author = {MacDonald, Carolyn T. and Gibbs, Julian H. and Pipkin, Allen C.},
title = {Kinetics of biopolymerization on nucleic acid templates},
journal = {Biopolymers},
volume = {6},
number = {1},
pages = {1-25},
doi = {https://doi.org/10.1002/bip.1968.360060102},
url = {https://onlinelibrary.wiley.com/doi/abs/10.1002/bip.1968.360060102},
year = {1968}
}

@Article{OConnell,
  author    = {O'Connell, N.},
  journal   = {Probab. Theory Rel.},
  title     = {Interacting diffusions on positive definite matrices},
  year      = {2021},
  issn      = {1432-2064},
  month     = mar,
  doi       = {10.1007/s00440-021-01039-3},
  fjournal  = {Probability Theory and Related Fields},
  publisher = {Springer Science and Business Media LLC},
  url       = {http://dx.doi.org/10.1007/s00440-021-01039-3},
}

@Article{OConnell2,
  author    = {O'Connell, Neil},
  journal   = {Ann. Probab.},
  title     = {Directed polymers and the quantum Toda lattice},
  year      = {2012},
  issn      = {0091-1798},
  month     = mar,
  number    = {2},
  volume    = {40},
  doi       = {10.1214/10-aop632},
  fjournal  = {The Annals of Probability},
  publisher = {Institute of Mathematical Statistics},
  url       = {http://dx.doi.org/10.1214/10-AOP632},
}

@Article{Okounkov,
  author   = {Okounkov, A.},
  journal  = {Selecta Math. (N.S.)},
  title    = {Infinite wedge and random partitions},
  year     = {2001},
  issn     = {1420-9020},
  month    = {Apr},
  number   = {1},
  pages    = {57},
  volume   = {7},
  abstract = {We use representation theory to obtain a number of exact results for random partitions.  In particular, we prove a simple determinantal formula for correlation functions of what we call the Schur measure on partitions (which is a far reaching generalization of the Plancherel measure; see [3], [8]) and also observe that these correlations functions are {\$} {\backslash}tau {\$}-functions for the Toda lattice hierarchy. We also give a new proof of the formula due to Bloch and the author [5] for the so-called n-point functions of the uniform measure on partitions and comment on the local structure of a typical partition.},
  day      = {01},
  doi      = {10.1007/PL00001398},
  fjournal = {Selecta Mathematica},
  url      = {https://doi.org/10.1007/PL00001398},
}

@article{OlkinRubin,
author = {Ingram Olkin and Herman Rubin},
title = {{A Characterization of the Wishart Distribution}},
volume = {33},
journal = {The Annals of Mathematical Statistics},
number = {4},
publisher = {Institute of Mathematical Statistics},
pages = {1272 -- 1280},
year = {1962},
doi = {10.1214/aoms/1177704359},
URL = {https://doi.org/10.1214/aoms/1177704359}
}

@misc{RiderValko,
      title={Matrix Dufresne Identity}, 
      author={Brian Rider and Benedek Valko},
      year={2014},
      eprint={1409.1954},
      archivePrefix={arXiv},
      primaryClass={math.PR},
      url={https://arxiv.org/abs/1409.1954}, 
      note   = {arXiv:1409.1954}
}

@Article{Seppalainen,
  author    = {Timo Sepp{\"a}l{\"a}inen},
  journal   = {Ann. Probab.},
  title     = {{Scaling for a one-dimensional directed polymer with boundary conditions}},
  year      = {2012},
  number    = {1},
  pages     = {19 -- 73},
  volume    = {40},
  doi       = {10.1214/10-AOP617},
  fjournal  = {The Annals of Probability},
  keywords  = {Burke’s theorem, Directed polymer, Partition function, random environment, Scaling exponent, Superdiffusivity},
  publisher = {Institute of Mathematical Statistics},
  url       = {https://doi.org/10.1214/10-AOP617},
}

@book{Terras,
author="Terras, Audrey",
Title="Harmonic Analysis on Symmetric Spaces---Higher Rank Spaces, Positive Definite Matrix Space and Generalizations",
year="2016",
publisher="Springer New York",
address="New York, NY",
pages="1--336",
abstract="In the beginning the Universe was created. This has made a lot of people very angry and been widely regarded as a bad move. Many races believe that it was created by some sort of god, though the Jatravartid people of Viltvodle VI believe that the entire Universe was in fact sneezed out of the nose of a being called the Great Green Arkleseizure. Great Green Arkleseizure",
isbn="978-1-4939-3408-9",
doi="10.1007/978-1-4939-3408-9_1",
url="https://doi.org/10.1007/978-1-4939-3408-9_1"
}

@Article{UchiyamaSasamotoWadati,
  author    = {Uchiyama, Masaru and Sasamoto, Tomohiro and Wadati, Miki},
  journal   = {J. Phys. A},
  title     = {Asymmetric simple exclusion process with open boundaries and Askey–Wilson polynomials},
  year      = {2004},
  issn      = {1361-6447},
  month     = apr,
  number    = {18},
  pages     = {4985–5002},
  volume    = {37},
  doi       = {10.1088/0305-4470/37/18/006},
  fjournal  = {Journal of Physics A: Mathematical and General},
  publisher = {IOP Publishing},
  url       = {http://dx.doi.org/10.1088/0305-4470/37/18/006},
}

@Article{WangRamdas,
      title={Positive Semidefinite Matrix Supermartingales}, 
      author={Hongjian Wang and Aaditya Ramdas},
      journal       = {to appear in Electronic Journal of Probability},
      year={2025},
      eprint={2401.15567},
      archivePrefix={arXiv},
      primaryClass={math.PR},
      url={https://arxiv.org/abs/2401.15567}, 
}

@Article{Warren,
  author    = {Jon Warren},
  journal   = {Electron. J. Probab.},
  title     = {{Dyson's Brownian motions, intertwining and interlacing}},
  year      = {2007},
  pages     = {573 -- 590},
  volume    = {12},
  doi       = {10.1214/EJP.v12-406},
  fjournal  = {Electronic Journal of Probability},
  keywords  = {Coalescing Brownian motions, Gelfand-Tsetlin cone, intertwining, non-colliding Brownian motions},
  publisher = {Institute of Mathematical Statistics and Bernoulli Society},
  url       = {https://doi.org/10.1214/EJP.v12-406},
}

@Article{WarrenWindridge,
  author   = {Jon Warren and Peter Windridge},
  journal  = {Electron. J. Probab.},
  title    = {Some examples of dynamics for {G}elfand-{T}setlin patterns},
  year     = {2008},
  pages    = {1745-1769},
  volume   = {14},
  fjournal = {Electronic Journal of Probability},
  url      = {https://api.semanticscholar.org/CorpusID:1576976},
}

\end{document}